\theoremstyle{plain}
\newtheorem{thm}{Theorem}[section] 
\newtheorem{lem}[thm]{Lemma}
\newtheorem{prop}[thm]{Proposition}
\newtheorem{cor}[thm]{Corollary} 
\theoremstyle{definition} 
\newtheorem{defn}[thm]{Definition}
\newtheorem{exmp}[thm]{Example}
\theoremstyle{remark} 
\newtheorem*{rem}{Remark}
\DeclareMathOperator*{\mycup}{\cup}
\begin{document}

\title{Splittings of Non-Finitely Generated Groups}
\date{last updated 9/12/2011}
\author{Robin M. Lassonde}

\maketitle


\begin{abstract}
In geometric group theory one uses group actions on spaces to gain information about groups.
One natural space to use is the Cayley graph of a group.
The Cayley graph arguments that one encounters
tend to require local finiteness,
and hence finite generation of the group.
In this paper, I take the theory of intersections of splittings of finitely generated groups
(as developed by Scott, Scott-Swarup, and Niblo-Sageev-Scott-Swarup),
and rework it to remove finite generation assumptions.
Whereas the aforementioned authors relied on the local finiteness of the Cayley graph, I capitalize on the Bass-Serre trees for the splittings.
\end{abstract}


\tableofcontents


\listoffigures


\section{History}

In this paper, we investigate crossing patterns of group splittings.
A splitting of a group $G$ is an algebraic generalization of
a codimension-$1$ submanifold of a manifold whose fundamental group is $G$.
In~\cite{Scott1998}, Scott gave a definition of the intersection number of two
almost invariant subsets of a finitely generated group,
and proved that the definition is symmetric.
Shortly after, Scott and Swarup showed that if two splittings of a finitely generated group
over finitely generated subgroups have intersection number zero,
then the two splittings are compatible (see~\cite{ScottSwarup2000}).
The same authors further developed these concepts in~\cite{ScottSwarup2003} and in~\cite{NibloSageevScottSwarup2005}
to construct an algebraic regular neighborhood for any finite collection of splittings of a finitely generated group.
The authors were concerned only with almost invariant subsets of finitely generated groups,
and often restricted to the case where the stabilizer of each almost invariant set is finitely generated.
Here, we use the properties of splittings to remove the finite generation assumptions
in the case where the almost invariant sets come from splittings,
and see how far we can push the theory before it falls apart.
The answer is: quite far.
Instead of using the Cayley graph (which is only useful when $G$ is finitely generated), we turn to the Bass-Serre trees for the splittings.

Here I review history leading up to the theory described above.

\subsection{Ends}

In 1931, Freudenthal~\cite{Freudenthal1931} defined the number of ends of a topological space.
Roughly speaking, the ends of a space are the space's ``connected components at infinity.''
To count the number of ends of a locally finite CW complex,
remove a finite number of open cells, and count the number of infinite components remaining.
The number of ends is the supremum over all such removals.
In 1944, Hopf realized that the number of ends of the Cayley graph of a finitely generated group
does not depend on the choice of generating set;
hence one can define the number of ends of a finitely generated group
to be the number of ends of its Cayley graph (see~\cite{Hopf1944}).
Slightly more is true: if a finitely generated group acts cocompactly on a locally finite space,
then the number of ends of the space is the same as the number of ends of a group.
Several years later, Specker gave a purely algebraic definition for the number of ends of any group
(see~\cite{Specker1950}).
Usually when we think of the number of ends of a group, we are thinking of the geometric interpretation (which only works for finitely generated groups).
However, it is good to know that the definition can be extended to non-finitely generated groups.

\subsection{Splittings}

Group splittings were defined around the same time as ends.
Schreier introduced amalgamated free products in 1927 (see~\cite{Schreier1927}),
and two decades later, Higman-Neumann-Neumann introduced HNN extensions (see~\cite{HNN1949}).
Both amalgamated free products and HNN extensions were initially described
in terms of normal forms for words.
In 1977, Serre discovered that splittings can be described as group actions on trees.
This topic is known as ``Bass-Serre theory'' (see~\cite{Serre1977} and~\cite{Serre1980}).
Immediately after, Scott and Wall noted that one can use the Seifert-van-Kampen Theorem
to realize any graph of groups as a graph of spaces (see~\cite{ScottWall1979}).
From Scott's point of view, there is no reason to distinguish between amalgamated free products and HNN extensions,
so he called both ``splittings.''

\subsection{Stallings' Theorem}

A decade before Serre's discovery that group actions on trees correspond to splittings,
Stallings made a connection between splittings and ends of groups.
Stallings' theorem states that a finitely generated group $G$ has at least two ends if, and only if, $G$ splits over a finite subgroup
(see~\cite{Stallings1971} and~\cite{Stallings1968}).
He also showed that groups of cohomological dimension one are free.
Swan extended Stallings' results to non-finitely generated groups (see~\cite{Swan1969}).

\subsection{Almost Invariant Sets}

Also in the 1970's, Cohen coined the term ``almost invariant set'' over the trivial subgroup, as a way of keeping track of the ends of a group
(see~\cite{Cohen1970}).
Twenty years earlier Specker used almost invariant sets in his paper~\cite{Specker1950};
however, this fact has been entirely overlooked in the history of almost invariant sets,
in part because Specker was interested in something more general.
Cohen also observed that a subset of a finitely generated group is almost invariant if, and only if,
the subset has finite coboundary in the Cayley graph of the group.
Houghton formally defined the number of ends of a pair $(G,H)$ of groups (see~\cite{Houghton1974}),
and one can make a similar observation that
a subset of a finitely generated group is $H$-almost invariant if, and only if,
the subset has finite coboundary in the Cayley graph quotiented out by $H$.

Dunwoody used Bass-Serre theory and Cohen's almost invariant sets to
produce a beautiful geometric proof of Stallings' theorem in the finitely generated case,
in which one takes an ``end'' of $G$ and uses it to directly construct a Bass-Serre tree (see Section~\ref{SUBdunwoody}, \cite{Dunwoody1979} and Section~$6$ of~\cite{ScottWall1979}).
In 1995, Sageev showed how to construct a $CAT(0)$ cubical complex from an almost invariant set (see~\cite{Sageev1995}).
In~\cite{Niblo2004}, Niblo used Sageev's construction to produce another geometric proof of Stallings' theorem.

\section{What's Proved in this Paper}
\label{Intro}

For an explanation of the concepts used in this section, see Section~\ref{Preliminaries}.

A splitting of a group $G$ is a one-edged graph of groups structure for $G$.
In this paper, it is convenient to talk about everything in terms of $G$-trees, to entirely avoid mentioning graphs of groups.
For example, looking at graphs of groups, it is not clear how to define an isomorphism of splittings.
On the other hand, using the $G$-tree definition of ``splitting,'' clearly we should define two splittings to be isomorphic precisely when their $G$-trees are isomorphic.

In~\cite{Scott1998}, Scott defined ``$X$ crosses $Y$,'' where $X$ is an $H$-almost invariant subset of $G$, and $Y$ is a $K$-almost invariant subset of $G$.
He proved that if $G$ is finitely generated,
then $X$ crosses $Y$ if, and only if, $Y$ crosses $X$.
By counting the number of group elements $g \in G$ such that $gX$ crosses $Y$,
Scott gave a well-defined, symmetric intersection number of $X$ and $Y$.
If $X$ and $Y$ come from splittings of $G$,
this gives a symmetric intersection number of the two splittings.
In this paper, I prove that if $Y$ arises from a splitting of $G$, and if $X$ crosses $Y$, then $Y$ crosses $X$
(see Proposition~\ref{SYMMETRIC}), without any assumption of finite generation.
In particular, for any group (not necessarily finitely generated),
the intersection number of two splittings is well-defined.
The key argument used to prove this is laid out in Lemma~\ref{HANDKFINITE}.

Also in~\cite{Scott1998}, Scott proved that if $X$ is an $H$-almost invariant subset of $G$, and
$Y$ is a $K$-almost invariant subset of $G$,
such that all of $G$, $H$ and $K$ are finitely generated, then the intersection number of $X$ and $Y$ is finite.
I give two examples showing that the assumption that $G$ be finitely generated is crucial:
it is possible for two standard $\{1\}$-almost invariant sets to have infinite intersection number (Example~\ref{INFINTERNUMBER}), and also possible for a $\{1\}$-almost invariant set to have infinite self-intersection number (Example~\ref{INFSELFINTER}).

In~\cite{ScottSwarup2000}, Scott and Swarup proved that
given a finite collection of splittings of a finitely generated group over finitely generated subgroups,
if the splittings have pairwise intersection number zero,
then they are compatible.
In this paper, I generalize Scott and Swarup's ``intersection number zero implies compatible'' theorem
to work without any finite generation assumptions.
One must instead make a sandwiching assumption.
Sandwiching is automatic if none of the splittings is a trivially ascending HNN extension (see Corollary~\ref{TRIVHNN}),
and sandwiching is necessary in order for the theorem to hold (see Section~\ref{NECESSARY}).
My proof mirrors the proof in~\cite{ScottSwarup2000},
replacing all coboundary arguments by new arguments using $G$-trees for splittings.
In particular:
\begin{itemize}
	\item
	Scott and Swarup used Cayley graph arguments to show that
	``almost inclusion'' defines a partial order on the set of all translates of all the almost invariant sets
	(arising from the splittings with intersection number zero) and their complements.
	I show that the fact that almost inclusion defines a partial order
	can be deduced directly once one has symmetry of crossing (see Corollary~\ref{PARTIALORDER}).
	\item
	Scott and Swarup used Cayley graph arguments to prove interval finiteness.
	Their arguments require both $G$ and the associated subgroups to be finitely generated.
	I show how to deduce interval finiteness directly from sandwiching (see Proposition~\ref{INTERVALS}).
\end{itemize}

For the rest of this section, let $\{X_j | j = 1 \ldots n\}$ be a finite collection of $H_j$-almost invariant subsets of a group $G$,
and let $\leq$ denote almost inclusion.

In~\cite{Sageev1995}, Sageev constructed a $CAT(0)$ cubical complex from the partially ordered set $(\Sigma, \subset)$.
In~\cite{NibloSageevScottSwarup2005}, Niblo, Sageev, Scott and Swarup generalized
Sageev's construction, using the partial order of almost inclusion on $\Sigma$, instead of inclusion,
to get a ``minimal'' cubing.
Their results assumed the ambient group $G$, as well as the $H_j$'s, to be finitely generated.
In this paper, I remove all the finite generation assumptions and instead assume
that all the $X_j$'s come from splittings collectively satisfying sandwiching (see Section~\ref{VeryGoodPosition}).
The main challenge in adapting the Cayley graph arguments from~\cite{NibloSageevScottSwarup2005}
to the non-finitely generated case is to show that the cubing is nonempty.
In particular:
\begin{itemize}
	\item
	Constructing an ultrafilter on $(\Sigma, \leq)$ (see the first half of proof of Theorem~\ref{ULTRAFILTER}), and
	\item
	Proving that this ultrafilter satisfies the descending chain condition (see the second half of proof of Theorem~\ref{ULTRAFILTER}).
\end{itemize}
One application of minimal cubings is putting the $X_j$'s in ``very good position,'' i.e. replacing each $X_j$ by some $X'_j$,
such that on the set of all translates of the $X'_j$'s and their complements,
inclusion is the same as almost inclusion.

In~\cite{ScottSwarup2003}, Scott and Swarup defined
the algebraic regular neighborhood of a finite collection of almost invariant subsets of a finitely generated group $G$,
and proved the existence and uniqueness of algebraic regular neighborhoods of almost invariant sets.
Scott and Swarup assumed that the minimal cubing from~\cite{NibloSageevScottSwarup2005} could be turned into an algebraic regular neighborhood of the $X_j$'s.
However, they ran into trouble proving that the object they constructed satisfied the definition of an algebraic regular neighborhood,
namely because edge stabilizers might not be finitely generated.
I include the missing arguments in Section~\ref{VGPforGFG}.

In Section~\ref{AlgebraicRegularNeighborhoodsE},
I use Scott and Swarup's suggestion of how to turn a minimal cubing into an algebraic regular neighborhood under my hypotheses
(i.e. the almost invariant sets come from splittings satisfying sandwiching, and no groups need to be finitely generated),
and I successfully prove that object constructed does indeed satisfy the definition of an algebraic regular neighborhood.
In Section~\ref{AlgebraicRegularNeighborhoodsU}, I prove that algebraic regular neighborhoods are unique,
even for a possibly infinite collection of splittings.

\subsection*{Acknowledgements} 
Partially supported by NSF RTG grant 0602191.

Thanks to Peter Scott for posing many of the questions answered in this paper, and for many productive discussions.

\section{Preliminaries and Main Ideas}
\label{Preliminaries}

The purpose of this section is to:
\begin{enumerate}
	\item
	Give enough background information
	to enable to reader to understand the statements of all the results in this paper, and
	\item
	Provide an idea of how everything fits together.
\end{enumerate}

\subsection{Splittings}

A {\bf $G$-tree} is a simplicial tree equipped with a (simplicial) $G$-action,
such that the action does not invert any edges.
A {\bf splitting} of a group $G$ is a $G$-tree $T$ with no global fixed points,
such that the quotient graph $G \setminus T$ has exactly one edge.
There are two cases:

\begin{enumerate}
	\item
	$G \setminus T$ consists of one edge with distinct endpoints.
	Pick an edge $e$ of $T$, let $H$ denote the stabilizer of $e$, and let $A$ and $B$ denote the stabilizers of the endpoints of $e$.
	We have inclusions $i_1 : H \hookrightarrow A$ and $i_2 : H \hookrightarrow B$.
	We call $\sigma$ an {\bf amalgamated free product} and may write {\bf $\sigma : G \cong A *_H B$}.
	Note that using Bass-Serre theory (see \cite{Serre1980}),
	we could reconstruct $T$ using only the inclusions $i_1 : H \hookrightarrow A$ and $i_2 : H \hookrightarrow B$.
	One presentation for $G$ is $\langle A, B | i_1 (h) = i_2 (h), \text{ for all } h \in H \rangle$.
	\item
	$G \setminus T$ consists of a loop with one edge and one vertex.
	Pick and edge $e$ of $T$, let $H$ denote the stabilizer of $e$, and let $A$ denote the stabilizer of one of the endpoints of $e$.
	We have an inclusion $i_1 : H \hookrightarrow A$.
	As there is only one orbit of vertices, the other endpoint of $e$ is
	a translate of the first endpoint by some $t \in G$.
	The stabilizer of this vertex is $t A t^{-1}$, so we have an inclusion $H \hookrightarrow t A t^{-1} \cong A$.
	We can view this second inclusion as $i_2 : H \hookrightarrow A$,
	where $i_2(h) := t^{-1} i_1(h) t$.
	We call $\sigma$ an {\bf HNN extension} and may write {\bf $\sigma : G \cong A*_H$}.
	Note that using Bass-Serre theory (see \cite{Serre1980}),
	we could recover $T$ using only the two inclusions $i_1$,~$i_2 : H \hookrightarrow A$.
	One presentation for $G$ is $\langle A, t | i_2 (h)~=~t^{-1} i_1 (h) t, \text{ for all } h~\in~H \rangle$.
\end{enumerate}
In either case, we call $\sigma$ a {\bf splitting of $G$ over $H$}. The subgroup $H$ is well-defined up to conjugacy in $G$. We call two splittings of a group {\bf isomorphic} if there exists a $G$-equivariant isomorphism between the trees for the two splittings.
In most of this paper, existence results (in particular, Theorems~\ref{COMPATIBLE},~\ref{VGP}, and~\ref{thmARNE})
only work for a finite collection of splittings,
while uniqueness results do not require such an assumption.

If the $G$-tree for a splitting is a line on which $G$ acts by translations only,
we call the splitting a
\label{TRIVASC}
{\bf trivially ascending HNN extension}.
Note that each edge in the line has the same stabilizer, denoted $H$,
so that $H$ acts trivially on the line.
Equivalently, $H$ is normal in $G$ and $\frac{G}{H} \cong \mathbb{Z}$.
Equivalently, the splitting has the form $A *_H$ where
both inclusions $i_1 : H \hookrightarrow A$ and $i_2: H \hookrightarrow A$ are isomorphisms.

To describe how two splittings of $G$ cross, for each splitting we will construct a subset of $G$, and then look at how the two subsets and their translates cross.
We show how to construct the subsets in Section~\ref{SUBaifromsplitting}.

\subsection{Almost Invariant Sets and Crossing}

Many concepts used here consider subgroups of $G$
``up to finite index'' and subsets of $G$ ``up to finitely many cosets.''
Here are a few key definitions capturing this idea.
\begin{defn}
Let $H$ and $K$ be subgroups of a group $G$.
\begin{itemize}
	\item
	We say $H$ and $K$ are {\bf commensurable}
	if $H \cap K$ has finite index in $H$ and in $K$.
	\item
	A subset of $G$ is {\bf $H$-finite}
	if it is contained in finitely many cosets $H g_i$ of $H$ in $G$.
	\item
	Two subsets $A$ and $B$ of $G$ are {\bf $H$-almost equal}, written $A \overset{H-a}{=} B$,
	if their symmetric difference is $H$-finite.		
\end{itemize}
\end{defn}

An almost invariant subset of a group is a subset which does not change by much
when you multiply on the right by an element of the group.
Specifically:

\begin{defn}
Let $G$ be any group, $H$ and subgroup of $G$, and $X$ any subset of $G$.
We say $X$ is an {\bf $H$-almost invariant} subset of $G$
if the following two properties are satisfied:
\begin{enumerate}
	\item
	$H$ stabilizes $X$, i.e. $hX = X$, for all $h \in H$, and
	\item
	$Xg \overset{H-a}{=}   X$, i.e. the symmetric difference of $Xg$ and $X$ is $H$-finite, for all $g \in G$.
\end{enumerate}
\end{defn}

Let $X$ be $H$-almost invariant and $Y$ be $K$-almost invariant.
We call the four sets $X \cap Y$, $X \cap Y^*$, $X^* \cap Y$, and $X^* \cap Y^*$
the {\bf corners of the pair $(X,Y)$}.
\begin{defn}
\label{DEFNcrossing}
	Let $X$ and $Y$ be subsets of $G$.
	$X$ and $Y$ are {\bf nested} if $X$ or $X^*$ is a subset of $Y$ or $Y^*$,
	i.e. a corner of the pair $(X,Y)$ is empty.
	Otherwise, $X$ and $Y$ are {\bf not nested}.
\end{defn}
We'd like a similar notion that works ``up to finitely many cosets.''
\begin{defn}
	Let $X$ be an $H$-almost invariant subset of $G$,
	and $Y$ a $K$-almost invariant subset of $G$.
	The pair $(X,Y)$ is {\bf almost nested} if
	a corner of the pair $(X,Y)$ is $K$-finite.
	\\
	Otherwise, {\bf $X$ crosses $Y$}, i.e.
	no corner of the pair $(X,Y)$ is $K$-finite.
\end{defn}
A couple of facts justify this definition:
\begin{enumerate}
	\item
	If $X$ and $Y$ arise from splittings of $G$,
	or if $X$ and $Y$ do not necessarily come from splittings but $G$ is finitely generated,
	then $X$ crosses $Y$ if, and only if, $Y$ crosses $X$.
	For the proof in the case where $X$ and $Y$ come from splittings, see Proposition~\ref{SYMMETRIC}.
	For the proof in the case where $G$ is finitely generated, see Lemma 2.3 of \cite{Scott1998}.	
	\item
	If $Y$ is both $K$-almost invariant and $K'$-almost invariant, then $K$ and $K'$ must be commensurable
	(see Lemma~\ref{COMMENSURABLE});
	in particular, $K$-finiteness is the same as $K'$-finiteness.
	Also note that if $Y$ is $K$-almost invariant, then $K \subset Stab(Y)$,
	so that $K$ must be a finite-index subgroup of $Stab(Y)$.
\end{enumerate}

Now we are ready to define the intersection number of two splittings $\sigma$ and $\tau$ of $G$.
Let $X$ be an $H$-almost invariant set arising from $\sigma$, and $Y$ a $K$-almost invariant set arising from $\tau$.
To compute the intersection number of $X$ and $Y$, count the number of $g \in G$ such that $gX$~crosses~$Y$,
then eliminate double-counting.
If $h \in H$ and $k \in K$, then $hX = X$ and $kY = Y$,
so $gX$ crossing $Y$ is the same as $k^{-1} g h X$ crossing $Y$.
Define the
{\bf intersection number of $\sigma$ and $\tau$}
by:
\label{DEFNintersectionnumber}
\[
	i(\sigma, \tau) := \text{number of double cosets } KgH \text{ such that } gX \text{ crosses } Y.
\]
In this paper, we will mostly only care whether the intersection number of two splittings is non-zero,
i.e. whether any translates of $X$ and $Y$ cross each other.

\subsection{Almost Invariant Sets Arising from Splittings}
\label{SUBaifromsplitting}

Let $\sigma$ be a splitting of $G$, and let $T$ be a $G$-tree for $\sigma$.
Pick a base vertex $v$ and a (directed) edge $e$ of $T$. We define a subset $X$ of $G$ by:
\[
	X := \{g \in G | e \text{ points away from } gv\}.
\]
Let $H$ denote the stabilizer of $e$.
Such $X$ is in fact an $H$-almost invariant subset of $G$;
clearly $H$ stabilizes $X$, and
for a proof of the second property, see Lemma~\ref{BASICEQUIV}.

Some almost invariant sets arise from splittings, and others do not.
Note that if $X$ is a standard almost invariant set arising from $\sigma$,
then so is each translate of $X$ and its complement.

We will often go back and forth between a splitting $\sigma$ and an almost invariant set $X$ arising from $\sigma$.
Given a splitting $\sigma$, we can construct $X$ by picking a base vertex and edge in the tree for $\sigma$.
Given an almost invariant set $X$ arising from a splitting $\sigma$,
let $\Sigma$ denote the set of all translates of $X$ and its complement, partially ordered by inclusion.
We can apply Dunwoody's theorem (see Section~\ref{SUBdunwoody}) to produce a $G$-tree.
This tree will yield a splitting isomorphic to $\sigma$ (see Proposition~\ref{EQUIVALENT} for a proof).
The choice of base edge $e$ and base vertex $v$ is necessary to define $X$, but it does not particularly matter which one we choose.
A change in $e$ will result in $X$ being replaced by a translate of $X$ or its complement
(and hence does not change the set $\Sigma$).
A change in $v$ will result in an almost invariant set $X' \overset{H-a}{=} X$ (see Lemma~\ref{BASICEQUIV}),
and hence yields a splitting isomorphic to $\sigma$ (see Proposition~\ref{EQUIVALENT}).

We will use the following convention: ``$X$ is an $H$-almost invariant set arising from the splitting $\sigma$''
implicitly means that $H$ is equal to the stabilizer of $X$ (and not a proper subgroup of $Stab(X)$), and $\sigma$ is a splitting of $G$ over $H$.

\subsection{Dunwoody's Theorem}
\label{SUBdunwoody}

Dunwoody's Theorem takes in a partially ordered set satisfying tree-like properties,
and spits out a tree.
Some applications include
Dunwoody's proof of Stallings' theorem in \cite{Dunwoody1979} (also see~\cite{ScottWall1979}),
reconstructing the Bass-Serre tree for a splitting
by using an almost invariant set that came from the splitting (see Section~\ref{SUBaifromsplitting}),
and constructing a common refinement for trees representing two splittings that have intersection number zero
(see Section~\ref{SUBintzero}).

Take any simplicial tree and let $\Sigma$ denote its (directed) edge set.
Reversing the direction of an edge gives us a free involution $*$ on $\Sigma$,
and we can describe an undirected edge as a pair $\{e, e^*\}$.
We describe some tree-like properties that $\Sigma$ satisfies:
\begin{itemize}
	\item
	If there is an edge path starting with the edge $e$ and ending with $f$,
	then there is an edge path starting with $f^*$ and ending with $e^*$.
	\item
	For any two undirected edges,
	we can find a simple edge path connecting them.
	\item
	$\Sigma$ has no loops.
\end{itemize}
One can define a partial order on $\Sigma$ as follows:
\[
	e \leq f \iff \text{ there exists a simple edge path starting at } e \text{ and ending at } f.
\]

Dunwoody's theorem states that any partially ordered set $(\Sigma, \leq)$
satisfying analogous properties
can be turned into the edge set of a tree.
\begin{thm}[DunwoodyÕs Theorem]
	Let $(\Sigma, \leq)$ be a partially ordered set equipped with a free involution $*$ on $\Sigma$.
	Suppose the following conditions are satisfied:
	\begin{enumerate}
		\item
		For all $A, B \in \Sigma$, if $A \leq B$, then $B^* \leq A^*$.
		\item
		For all $A, B \in \Sigma$ with $A \leq B$,
		there are only finitely many $C \in \Sigma$ with $A \leq C \leq B$.
		\item
		For all $A, B \in \Sigma$, at least one of the four relations $A^{(*)} \leq B^{(*)}$ holds.
		\item
		We cannot have simultaneously $A \leq B$ and $A \leq B^*$.
	\end{enumerate}
	Then there exists a tree $T$ with (directed) edge set $\Sigma$, and such that
	$A \leq B$ if, and only if, there exists a simple edge path whose first edge is $A$ and whose last edge is $B$.
\end{thm}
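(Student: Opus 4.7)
The plan is to construct $T$ directly from $(\Sigma, \leq, *)$ by declaring its directed edge set to be $\Sigma$ with the given $*$ as reversal, and producing its vertex set as a quotient of $\Sigma$ by an equivalence relation $\sim$ capturing ``same initial endpoint.'' Motivated by the picture in a genuine tree, where two distinct edges $A, B$ share an initial vertex iff the sequence $A^{*}, B$ is a simple length-two edge path with no intermediate element, I would declare $A \sim B$ iff either $A = B$, or $A \neq B^{*}$ with $A^{*} \leq B$ and the closed interval $[A^{*}, B] = \{A^{*}, B\}$. Reflexivity is trivial, and symmetry follows from axiom (1), which reverses both the inequality and the interval under $*$.

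Transitivity is the technical heart. Given $A \sim B$ and $B \sim C$ pairwise distinct, axiom (3) applied to $A^{*}$ and $C$ yields one of the four relations $A^{(*)} \leq C^{(*)}$. The three ``wrong'' relations can each be eliminated by chaining with the known $A^{*} \leq B$ and $B^{*} \leq C$ and invoking axiom (4): for instance, $A^{*} \leq C^{*}$ combined with axiom (1) on $B^{*} \leq C$ gives $A^{*} \leq C^{*} \leq B$, forcing $C^{*} \in [A^{*}, B] = \{A^{*}, B\}$ and hence $C = A$ or $C = B^{*}$, both excluded; and $A \leq C^{*}$, after chaining $B^{*} \leq C \leq A^{*}$ and applying axiom (1), yields $A \leq B$, which together with $A^{*} \leq B$ produces both $B^{*} \leq A$ and $B^{*} \leq A^{*}$ in violation of axiom (4). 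This leaves $A^{*} \leq C$, and a parallel argument on any candidate $D \in [A^{*}, C]$, using axiom (3) to locate $D$ relative to $B$ and axiom (4) to discard the bad placements, forces $D \in \{A^{*}, C\}$.

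Now set $V := \Sigma / {\sim}$ and let the directed edge $A$ run from $[A]$ to $[A^{*}]$, with $*$ as edge reversal; the clause $A \neq B^{*}$ in the definition rules out $A \sim A^{*}$, so these endpoints are distinct and $T$ has no loops. For connectedness, given $A, B \in \Sigma$, axiom (3) lets me assume (after replacing by involutes) $A \leq B$; axiom (2) makes $[A, B]$ finite, and a maximal chain $A = E_{1} < E_{2} < \cdots < E_{n} = B$ of covering pairs satisfies $E_{i}^{*} \sim E_{i+1}$ directly from the definition of $\sim$, yielding a simple edge path from $A$ to $B$ in $T$. Acyclicity: a simple cycle would close up a strict chain of covering pairs $E_{1} < E_{2} < \cdots < E_{n} < E_{1}$, contradicting antisymmetry of $\leq$. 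Finally, the characterization $A \leq B \iff$ there is a simple edge path from $A$ to $B$ holds in the forward direction by the connectedness construction, and in the reverse direction by reading off $E_{i} \leq E_{i+1}$ from each step $E_{i}^{*} \sim E_{i+1}$ and chaining via transitivity of $\leq$. The main obstacle throughout is the transitivity of $\sim$, where all four axioms enter in concert; once that is in place, everything else is a matter of unpacking the definition.
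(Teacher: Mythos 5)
Your proposal is correct and follows essentially the same route as the paper, which merely records the key construction (vertices as equivalence classes of edges with nothing strictly between them, in the dual convention $e \leq f^{*}$ rather than your $A^{*} \leq B$) and then remarks that ``one must prove that everything works out.'' The verifications you supply --- transitivity of $\sim$ via axioms (3) and (4), connectedness via a maximal covering chain in the finite interval $[A,B]$ supplied by axiom (2), and acyclicity via the strictly increasing cycle contradicting antisymmetry --- are precisely the details the paper leaves to the reader.
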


The key idea in the proof of Dunwoody's theorem is constructing the vertices of $T$.
Let each element of $\Sigma$ be a directed edge,
and make a vertex wherever you have two edges with nothing in between.
Specifically, define the vertices of the tree to be equivalence classes of elements of~$\Sigma$:
\begin{equation*}
	[e] = [f] \iff
	\begin{array}{ll}
		e \leq f^*, \text{ AND}\\
		\text{if } e \leq a \leq f^*, \text{ then } a = e \text{ or } a = f^*.\\
	\end{array}
\end{equation*}
Then one must prove that everything works out.

\subsection{Almost Inclusion and Small Corners}
\label{SUBpo}

Given an $H$-almost set $X$, we will often want to refer to the set of all translates of $X$ and its complement.
Denote this set by $\Sigma(X)$.
\[
	\Sigma(X) := \{gX, gX^* | g \in G\}.
\]

For the remainder of the paragraph, let $X$ be an $H$-almost invariant subset of $G$,
let $Y$ be a $K$-almost invariant subset of $G$,
and let $\Sigma$ denote $\Sigma(X) \cup \Sigma(Y)$.
Also assume that we are in a situation where crossing is symmetric - 
for example, assume $G$ is finitely generated
or assume that $X$ and $Y$ come from splittings.
Given $A,B \in \Sigma$, 
we say a corner of the pair $(A,B)$ is {\bf small} if it is
contained in finitely many right $Stab(A)$-cosets.
By the above remarks, this is equivalent to the corner being $Stab(B)$-finite.
In this paper, we'll only use the term ``small'' when we know that crossing is symmetric.

Inclusion partially orders $\Sigma$;
however, we'd prefer a partial order that isn't affected
when you change $X$ by finitely many $H$-cosets or $Y$ by finitely many $K$-cosets.
The obvious thing to do is declare $A \leq B$ precisely when $A \cap B^*$ is small.
However, we run into a potential difficulty: if two corners of a given pair $(A,B)$ are small,
how do we decide which inequality to choose?
If two corners are small and one of them is empty,
then choose to only pay attention to the empty corner.
For example, if $A \cap B^*$ and $A^* \cap B$ are small, and $A^* \cap B$ is empty (i.e. $B \subset A$),
then we declare $B \leq A$.
We say that $\Sigma$ is in {\bf good position}
if for all $A,B \in \Sigma$,
if two corners of the pair $(A,B)$ are small, then one is empty.
We can define a relation $\leq$ on $\Sigma$ by:
\label{DEFNPO}
\[
	A \leq B \iff A \cap B^* \text{ is empty, or is the only small corner of the pair } (A,B).
\]
It turns out that if $\Sigma$ is in good position,
then $\leq$ is a partial order on $\Sigma$ (see Corollary~\ref{PARTIALORDER} for a proof).
I show that good position is automatic if $X$ and $Y$ arise from non-isomorphic splittings
(see Corollary~\ref{GOODPOSITION}).

\subsection{Example: Simple Closed Curves on a Surface}

To gain more intuition about splittings, we look at a few concrete examples.
Let $S$ be a closed, orientable surface of genus at least two.
Let $G$ denote the fundamental group of $S$.
Let $\gamma$ be a $\pi_1$-injective simple closed curve on $S$.
Let $\pi: \widetilde{S} \rightarrow S$ denote the universal cover of $S$.
The preimage $\pi^{-1}(\gamma)$ is a collection of disjoint lines.
Pick one of these lines, and call it $l$. Let $H$ denote the stabilizer of $l$, so that $H \cong \mathbb{Z}$.
We can construct a tree $T$ whose directed edge set is $\Sigma(X)$;
simply take the dual tree to $\pi^{-1}(\gamma)$ in $\widetilde{S}$.
After choosing basepoints, $G$ acts on $T$ via deck transformations, with no fixed points or edge inversions.
The stabilizer of an edge is isomorphic to $\mathbb{Z}$, so we have a splitting $\sigma$ of $G$ over $\mathbb{Z}$.
If $\gamma$ separates $S$ into two components $S'$ and $S''$,
then $\sigma$ is an amalgamated free product $\pi_1(S') *_\mathbb{Z} \pi_1 (S'')$ (see Figure~\ref{fig:Gtwo-1}).
If $\gamma$ does not separate $S$, then $\sigma$ is an HNN extension $\pi_1(S - \gamma) *_\mathbb{Z}$ (see Figure~\ref{fig:Gtwo-2}).

\begin{figure}[!htbp]
	\centering
	\begin{minipage}[h]{0.48\linewidth}
		\includegraphics[width=65mm]{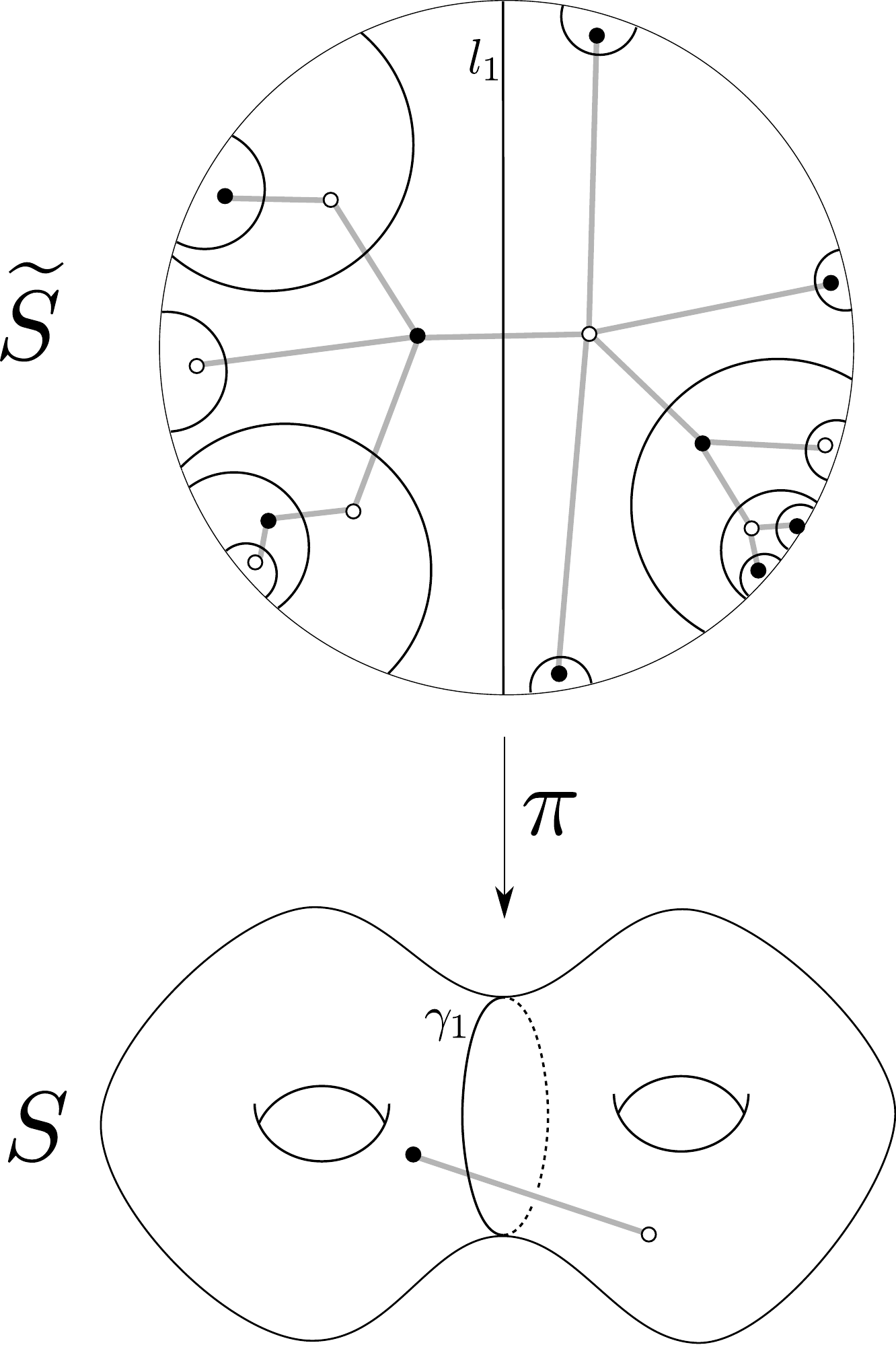}
	\end{minipage}
	\hfill
	\begin{minipage}[h]{0.48\linewidth}
		\caption[Separating loop]{
		\label{fig:Gtwo-1}
		Schematic picture: $\pi^{-1}(\gamma_1)$, the union of $l_1$ and all its translates,
		is a collection if disjoint lines.
		The complement $\widetilde{S} - \pi^{-1}(\gamma_1)$ has two types of components: those that project to the left of $\gamma_1$, and those that project to the right of $\gamma_1$.
		Correspondingly, the dual tree to $\pi^{-1}(\gamma_1)$ has two orbits of vertices.
		The action of the fundamental group of $S$ on the tree gives an amalgamated free product.
		}
	\end{minipage}
\end{figure}

\begin{figure}[!htbp]
	\centering
	\begin{minipage}[h]{0.48\linewidth}
		\includegraphics[width=65mm]{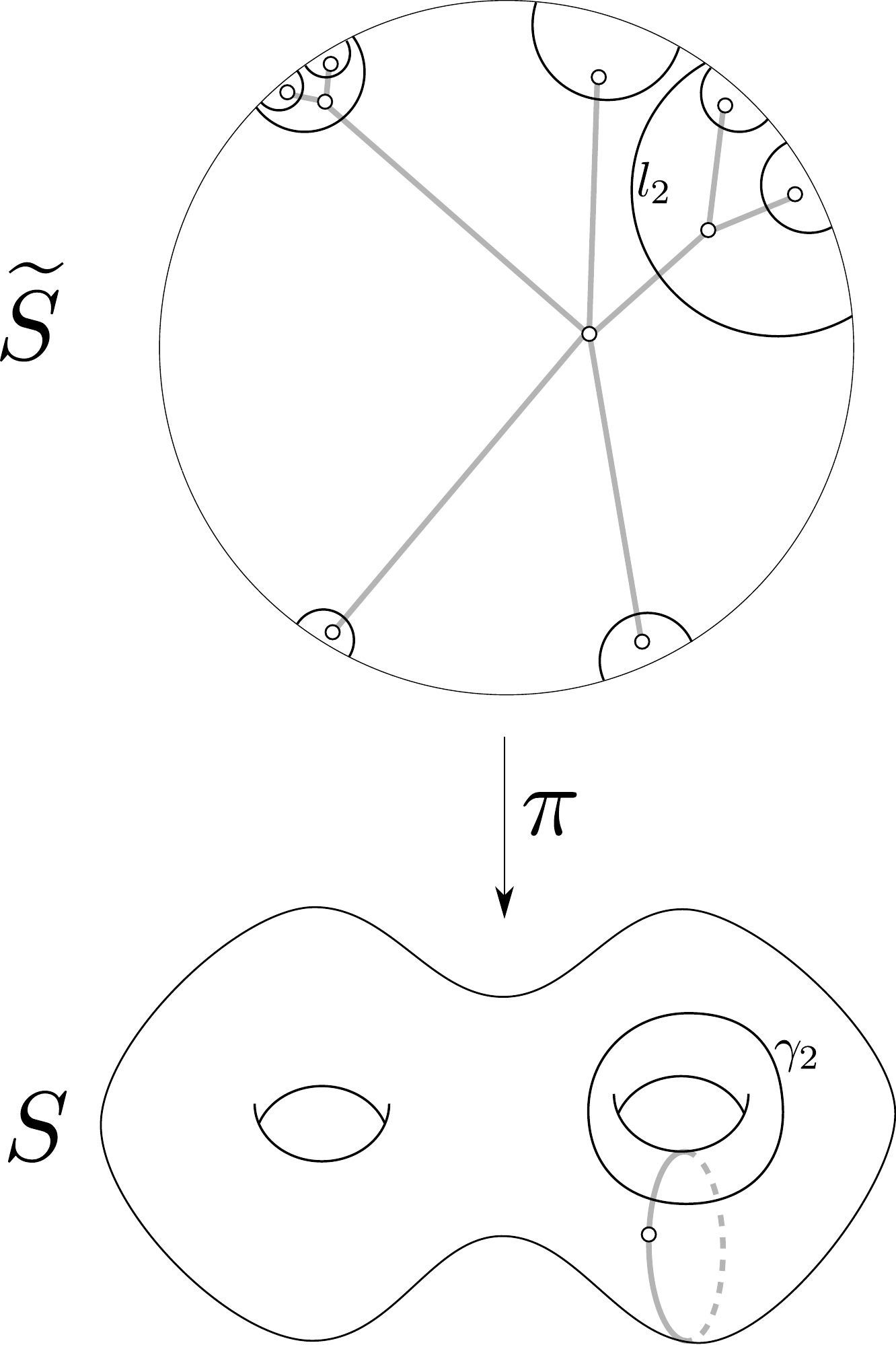}
	\end{minipage}
	\hfill
	\begin{minipage}[h]{0.48\linewidth}
		\caption[Non-separating loop]{
		\label{fig:Gtwo-2}
		Schematic picture: $\pi^{-1}(\gamma_2)$, the union of $l_2$ and all its translates,
		is a collection if disjoint lines.
		$S - \gamma$ has only one region, so the dual tree to $\pi^{-1}(\gamma_2)$
		has only one orbit of vertices.
		The action of the fundamental group of $S$ on the tree gives an HNN extension.
		}
	\end{minipage}
\end{figure}

Next, consider the curves $\gamma_1$ and $\gamma_2$ from Figures~\ref{fig:Gtwo-1} and~\ref{fig:Gtwo-2} simultaneously,
as in Figure~\ref{fig:Gtwo-12}.
We have associated splittings $\sigma_1$ and $\sigma_2$ of $\pi_1(S)$.
Since $\gamma_1$ and $\gamma_2$ do not cross each other,
the dual graph $T$ to $\pi^{-1}(\gamma_1 \cup \gamma_2)$ is a tree.
Moreover, $T$ is a common refinement of the dual tree to $\pi^{-1}(\gamma_1)$
and the dual tree to $\pi^{-1}(\gamma_2)$.
Hence trees for $\sigma_1$ and $\sigma_2$ have a common refinement.
This is an example of compatible splittings (see Section~\ref{SUBintzero}).

\begin{figure}[!htbp]
	\centering
	\begin{minipage}[h]{0.48\linewidth}
		\includegraphics[width=65mm]{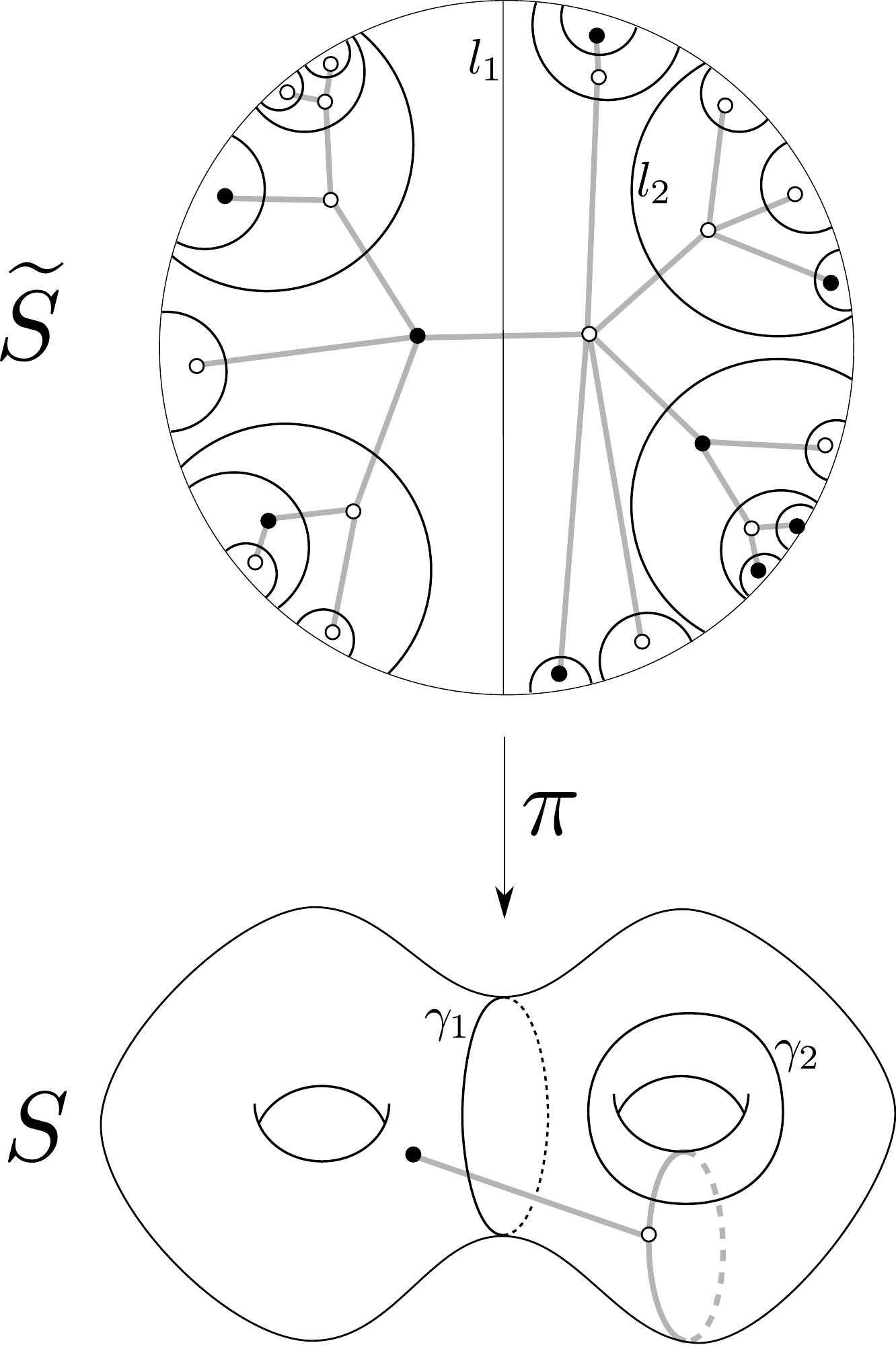}
	\end{minipage}
	\hfill
	\begin{minipage}[h]{0.48\linewidth}
		\caption[Two loops that do not intersect]{
		\label{fig:Gtwo-12}
		Schematic picture: $\pi^{-1}(\gamma_1 \cup \gamma_2)$,
		the union of $l_1$, $l_2$, and all their translates, is a collection of distinct lines,
		each projecting to either $\gamma_1$ or $\gamma_2$.
		Correspondingly, the dual tree to $\pi^{-1}(\gamma_1 \cup \gamma_2)$ has two orbits of edges.
		The complement $S - \pi^{-1}(\gamma_1 \cup \gamma_2)$ has two types of components:
		those that project to the left of $\gamma_1$, and those that project to the right of $\gamma_1$.
		Correspondingly, the dual tree to $\pi^{-1}(\gamma_1 \cup \gamma_2)$ has two orbits of vertices.
		$\gamma_1$ and $\gamma_2$ yield two splittings of the fundamental group of $S$,
		and $T$ is a compatibility tree for the splittings.
		}
	\end{minipage}
\end{figure}

What if we ``poke a finger'' out of $\gamma_2$, as in Figure~\ref{fig:Gtwo-12poke}?
This gives the same two splittings $\sigma_1$ and $\sigma_2$ of $\pi_1(S)$ as in the previous paragraph.
However, from the way $\gamma_1$ and $\gamma_2$ are drawn,
the dual graph to $\pi^{-1}(\gamma_1 \cup \gamma_2)$ is no longer a tree.
In order to find a common refinement of the trees for $\sigma_1$ and $\sigma_2$,
it is helpful to first pull $\gamma_1$ and $\gamma_2$ tight to geodesics.
When dealing with arbitrary splittings (not just those induced by simple closed curves on surfaces),
we'll need some sort of algebraic tool choose nice representatives for splittings.

\begin{figure}[!htbp]
	\centering
	\begin{minipage}[h]{0.48\linewidth}
		\includegraphics[width=65mm]{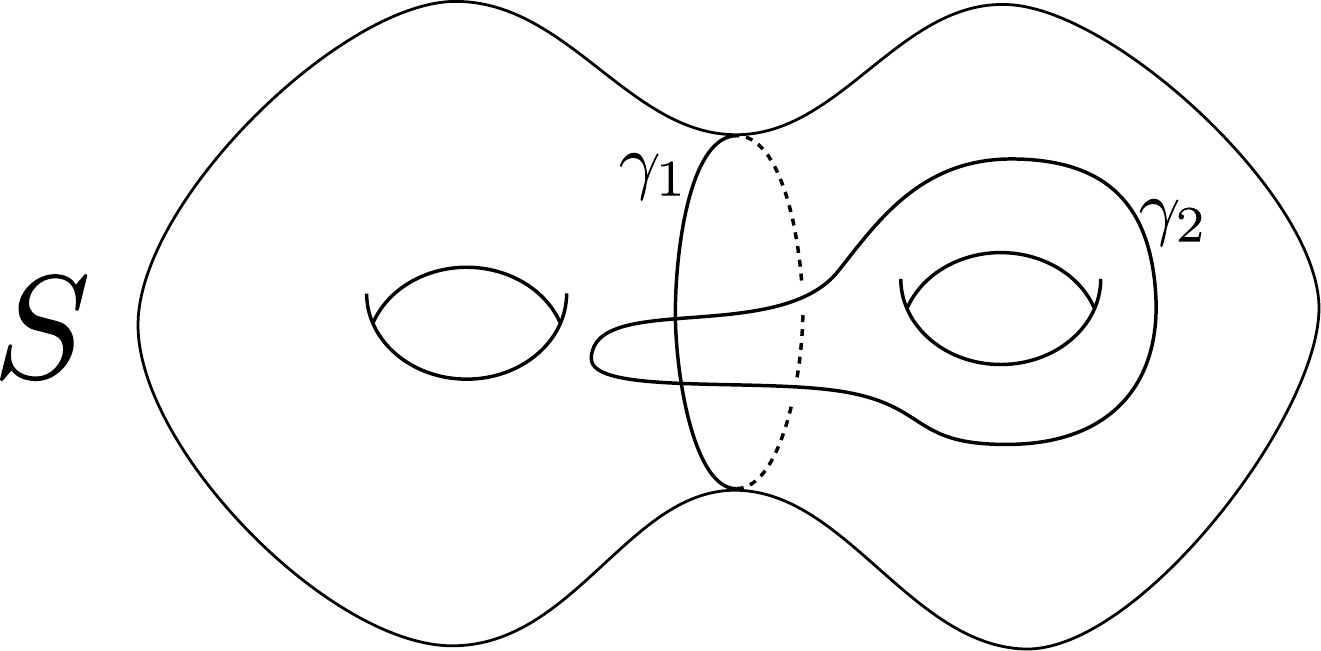}
	\end{minipage}
	\hfill
	\begin{minipage}[h]{0.48\linewidth}
		\caption[Two loops that intersect trivially]{
		\label{fig:Gtwo-12poke}
		Take the previous example,
		but deform $\gamma_2$ slightly.
		The splittings induced by $\gamma_1$ and $\gamma_2$ still have intersection number zero,
		but in this example,
		the dual graph to $\pi^{-1}(\gamma_1 \cup \gamma_2)$ is not a tree.
		Since $\gamma_1$ and the new $\gamma_2$ have ``inessential crossing,''
		the are not the best curves to use for the splittings.
		}
	\end{minipage}
\end{figure}

\subsection{Sandwiching}

At times we will need to assume that either $X$ crosses all translates of $Y$, or $X$ can be sandwiched between two translates of $Y$ or $Y^*$.
\begin{defn}[Modified from \cite{ScottSwarup2003errata}]
\label{DEFNSANDWICHING}
Let $\{X_j | j \in J\}$ denote any collection of almost invariant subsets of $G$,
where $\mycup \limits_{j \in J} \Sigma(X_j)$ is partially ordered by $\leq$.
\begin{itemize}
	\item
	{\bf $X_j$ is sandwiched by $X_k$}
	if either there exist $A, B \in \Sigma(X_k)$ such that $A \leq X_j \leq B$, \\
	or $X_j$ crosses every element of $\Sigma(X_k)$.
	\item
	{\bf $\{X_j | j \in J\}$ satisfies sandwiching}
	if for all $j,k \in J$,
	we have $X_j$ is sandwiched by $X_k$.
	\item
	If $X_j$ is a standard almost invariant set arising from a splitting $\sigma_j$,\\
	then we say {\bf $\{\sigma_j | j \in J\}$ satisfies sandwiching}.
	Note that by Lemma~\ref{BASICEQUIV}, it does not matter which $X_j$ we choose to represent $\sigma_j$.
\end{itemize}
\end{defn}
Most collections of splittings satisfy sandwiching.
In fact, if none of the $X_j$'s yields a trivially ascending HNN extension,
then $\{X_j | j \in J\}$ automatically satisfies sandwiching
(see Corollary~\ref{TRIVHNN}).

\subsection{Compatibility}
\label{SUBintzero}

We call two splittings ``compatible'' if their trees have a common refinement.
Here is the formal definition, which works for an arbitrary number of splittings:
\begin{defn}
Let $\{\sigma_j | j \in J\}$ be any collection of splittings of $G$.
A {\bf compatibility tree for $\{\sigma_j | j \in J\}$}
is a $G$-tree $T$ consisting of one edge orbit for each $j$,
such that collapsing all edges except the $\sigma_j$-edges yields a tree for $\sigma_j$.
We say {\bf $\{\sigma_j | j \in J\}$ is compatible}
if $\{\sigma_j | j \in J\}$ has a compatibility tree.
\end{defn}
Note that in the definition of ``compatibility tree,'' we insist on having a bijection
between $J$ and the set of edge orbits.

\begin{exmp}
Any splitting is compatible with (any splitting isomorphic to) itself.
To see this, take a $G$-tree for the splitting, and subdivide each edge in two.
\end{exmp}

Since it is not possible to have a $G$-tree with infinitely many distinct edge orbits yielding isomorphic splittings,
if $\{\sigma_j | j \in J\}$ is compatible then only finitely many of the $\sigma_j$'s can belong to any given isomorphism class.
Compatibility trees are always unique (see Corollary~\ref{UNIQUENESSTREE}),
even for infinite collections of splittings.

I prove that if $i(\sigma, \tau)=0$ and $\sigma$ and $\tau$ satisfy sandwiching,
then $\sigma$ and $\tau$ are compatible
(see Theorem~\ref{COMPATIBLE}).
The main idea of the proof is to note that if $i(\sigma, \tau)=0$,
then for all $A, B \in \Sigma$, we have $A$ and $B$ are almost nested, i.e. one of $A \leq B, A \leq B^*, A^* \leq B$, or $A^* \leq B^*$.
Then apply Dunwoody's theorem (see Section~\ref{SUBdunwoody}).

Note that if two splittings are compatible,
then we can find corresponding almost invariant sets that are nested (instead of just almost nested) as follows.
Suppose that $X$ is an $H$-almost invariant set arising from $\sigma$,
that $Y$ is a $K$-almost invariant set arising from $\tau$,
and that $T$ is a compatibility tree for $\sigma$ and $\tau$.
Take $\leq$, as defined in Section~\ref{SUBpo}.
Since $i(\sigma,\tau) = 0$, we have for all $A, B \in \Sigma$, one of $A^{(*)} \leq B^{(*)}$.
We apply Lemma~\ref{BASICEQUIV}  to $T$ to get
$X' \overset{H-a}{=} X$ and $Y' \overset{H-a}{=} Y$,
such that for all $A', B' \in \Sigma(X') \cup \Sigma(Y')$, one of ${A'}^{(*)} \subset {B'}^{(*)}$.

\subsection{Sandwiching is Necessary}
\label{NECESSARY}

If a $G$-tree has two edge orbits and no fixed points,
then for any given edge, we can find two edges on either side of it belonging to the other edge orbit.
This shoes that if two splittings are compatible, then the splittings necessarily satisfy sandwiching.

Guirardel gave an example of two splittings with intersection number zero that do not satisfy sandwiching
(and hence are not compatible). These are splittings of the free group on two generators, and over non-finitely generated subgroups.
One of the splittings is a trivially ascending HNN extension.
See \cite{ScottSwarup2003errata} for the construction.

\subsection{Turning Almost Inclusion Into Inclusion}

What happens if we try to apply ideas from the ``intersection number zero implies compatible'' theorem to splittings having positive intersection number?

Let $\sigma_1, \sigma_2, \ldots, \sigma_n$ be splittings of $G$ collectively satisfying sandwiching,
and such that no two of the splittings are isomorphic.
Let $X_j$ be an almost invariant set arising from $\sigma_j$.
As on Page~\pageref{DEFNPO}, we can define a partial order $\leq$ on $\mycup \limits_{j=1}^{n} \Sigma(X_j)$, such that if $A$ is a subset of $B$, then $A \leq B$.
It turns out that we can build a $CAT(0)$ cubical complex,
then use Theorem~\ref{VGP} to replace each $X_j$ by another almost invariant set $X'_j \overset{H_j-a}{=} X_j$
such that for all $A, B \in \mycup \limits_{j=1}^{n} \Sigma(X'_j)$,
we have $A \subset B$ if, and only if, $A \leq B$. This is called putting the $X_j$'s in
{\bf very good position}.
Details for this construction are laid out in Section~\ref{VeryGoodPosition}.
The same $CAT(0)$ cubical complex can also be used to construct an algebraic regular neighborhood of the $\sigma_j$'s
(see Section~\ref{AlgebraicRegularNeighborhoodsE}).

\subsection{Algebraic Regular Neighborhoods}

The notion of ``algebraic regular neighborhood'' is a generalization of PL regular neighborhood, up to homotopy.
Let $\gamma_1, \ldots, \gamma_n$ be $\pi_1$-injective simple closed curves on a surface $S$,
yielding splittings $\sigma_1, \ldots, \sigma_n$ of $\pi_1(S)$.
Let $N$ be a regular neighborhood of the $\gamma_j$'s.
Assume each component of the frontier of $N$ is $\pi_1$-injective.
We will construct a bipartite graph dual to the frontier of $N$.
For each component of $N$, add a $V_0$-vertex.
For each component of $S - N$, add a $V_1$-vertex.
For each component of the frontier of $N$, add an edge connecting the corresponding $V_0$- and $V_1$-vertices.
The preimage of $\Gamma$ in $\widetilde{S}$ is a tree.
We call this tree an algebraic regular neighborhood of $\{\sigma_1, \ldots, \sigma_n\}$.
Each $V_0$-vertex orbit encloses (to be defined below) some of the $\sigma_j$'s.
Each simple closed curve disjoint from all the $\gamma_j$ can be homotoped to be disjoint from $N$,
and hence is enclosed in a $V_1$-vertex.
See Figure~\ref{fig:Gtwo-123} for a concrete example.

For a beautiful explanation of the relationship between algebraic regular neighborhoods
and JSJ decompositions of $3$-manifolds,
see the Introduction in \cite{ScottSwarup2003}.

In \cite{ScottSwarup2003}, the authors defined an algebraic regular neighborhood of a family of almost invariant sets as a graph of groups,
and they defined what it means for a vertex to enclose an almost invariant set.
Here, to avoid confusion about base points, we define algebraic regular neighborhood as a $G$-tree.
Also, since we're only working with almost invariant sets arising from splittings,
we define what it means for the orbit of a vertex of a $G$-tree to enclose a splitting.

\begin{defn}
Let $T$ be a $G$-tree, $V$ a vertex of $T$, and $\sigma$ a splitting of $G$.
We say {\bf the orbit of $V$ encloses $\sigma$}
if we can refine $T$ by inserting an edge at each vertex in the orbit of $V$,
such that the new edges form a tree for $\sigma$.
More precisely, there exists a $G$-tree $T'$
and a $G$-orbit of edges in $T'$ (call these edges {\bf $\sigma$ edges})
such that both of the following hold:
\begin{enumerate}
	\item
	There exists a $G$-equivariant isomorphism
	\[
		T' / \text{ all non-} \sigma \text{ edges collapsed} \cong \text{ tree for } \sigma.
	\]
	\item
	There exists a $G$-equivariant isomorphism
	\[
		T' / \text{ all } \sigma \text{ edges collapsed} \cong T.
	\]
\end{enumerate}
\end{defn}

Given a collection $\{\sigma_j | j \in J\}$ of splittings, we call $\sigma_j$ an {\bf isolated splitting}
if it has intersection number zero with each other splitting $\sigma_k$.
Given a $G$-tree, we call a vertex {\bf isolated} if the vertex has valence two in both the $G$-tree and the tree's quotient under the $G$-action.
In an algebraic regular neighborhood,
we'll want each isolated splitting in $\{\sigma_j | j \in J\}$
to be enclosed by the orbit of an isolated $V_0$-vertex.
Now we are ready to formally define an algebraic regular neighborhood.
\begin{defn}[Reformulated from \cite{ScottSwarup2003}]
\label{defnARN}
Let $G$ be any group with any collection $\{\sigma_j | j \in J\}$ of pairwise non-isomorphic splittings.
Suppose $\{\sigma_j | j \in J\}$ satisfies sandwiching.
\label{DEFNofARN}
	An {\bf algebraic regular neighborhood of $\{\sigma_j | j \in J\}$}
	is a bipartite $G$-tree $T$
	(denote the two vertex colors by $V_0$ and $V_1$)
	satisfying the following five conditions:
	\begin{enumerate}
		\item
		Each $\sigma_j$ is enclosed by some $V_0$-vertex orbit in $T$,
		and each $V_0$-vertex orbit encloses some~$\sigma_j$.
		\item
		If $\sigma$ is a splitting of $G$ over $H$,
		where for all $j \in J$
		$\sigma$ is sandwiched by $\sigma_j$
		and $i(\sigma, \sigma_j) = 0$,
		then $\sigma$ is enclosed by some $V_1$-vertex orbit in $T$.
		\item
		$T$ is a minimal $G$-tree.
		\item
		There exists a bijection
		\[
			f: 
			\{j \in J | \sigma_j \text{ is isolated}\}
			{\rightarrow} G\text{-orbits of isolated } V_0 \text{-vertices of } T
		\]
		such that $f(j)$ encloses $\sigma_j$.
		\item
		Every non-isolated $V_0$-vertex orbit in $T$ encloses some non-isolated $\sigma_j$.
	\end{enumerate}
\end{defn}

In Section~\ref{AlgebraicRegularNeighborhoodsE}, I prove the existence of algebraic regular neighborhoods
for any finite collection of splittings satisfying sandwiching.
In Section~\ref{AlgebraicRegularNeighborhoodsU}, I prove uniqueness of algebraic regular neighborhoods
for possibly infinite collections of splittings satisfying sandwiching.

\begin{figure}[!htbp]
	\centering
	\begin{minipage}[h]{0.48\linewidth}
		\includegraphics[width=50mm]{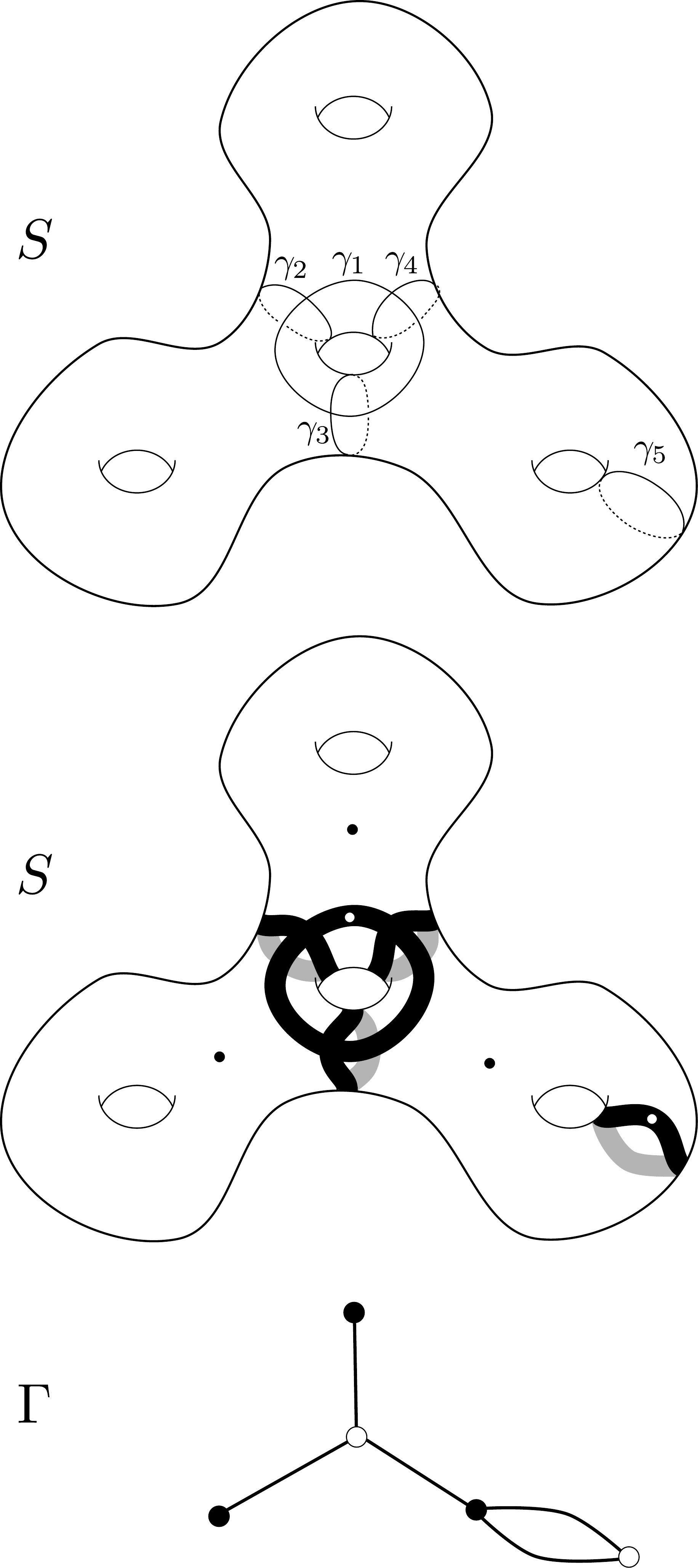}
	\end{minipage}
	\hfill
	\begin{minipage}[h]{0.48\linewidth}
		\caption[Regular neighborhood]{
		\label{fig:Gtwo-123}
		Let $\sigma_1$, $\sigma_2$, $\sigma_3$, $\sigma_4$, and $\sigma_5$ denote the induced splittings of the fundamental group of~$S$.
		Shown in bold is a PL regular neighborhood of $\{\gamma_1, \gamma_2, \gamma_3\, \gamma_4, \gamma_5\}$;
		call it $N$.
		Construct the dual graph to the frontier of $N$,
		making a $V_0$-vertex for each component of $N$ and a $V_1$-vertex for each component of $S - N$.
		The resulting graph, call it $\Gamma$, is bipartite.
		$\Gamma$~has two $V_0$-vertices.
		One $V_0$-vertex encloses $\sigma_1$, $\sigma_2$, $\sigma_3$, and $\sigma_4$.
		The other $V_0$-vertex encloses $\sigma_5$.
		The preimage of $\Gamma$ in $\widetilde{S}$
		is an algebraic regular neighborhood of $\{\sigma_1, \sigma_2, \sigma_3, \sigma_4, \sigma_5\}$.	
		}
	\end{minipage}
\end{figure}

\section{Symmetry of Crossing}
\label{Symmetry}

Let $G$ be any group with subgroups $H$ and $K$.
Let $X$ and $Y$ be $H$- and $K$-almost invariant subsets of $G$, respectively. Recall from Page \pageref{DEFNcrossing} the definition of crossing:
\begin{center}
	$X$ crosses $Y \iff $ all four corners of the pair $(X,Y)$ are $K$-infinite.
\end{center}
If $G$ is finitely generated and neither $X$ nor $Y$ is trivial,
an argument using coboundary in the Cayley graph for $G$ shows that the relation
``$X$ crosses $Y$'' is symmetric (see Lemma 2.3 of \cite{Scott1998}).
For non-finitely generated $G$, this argument utterly fails,
and so it seems plausible that crossing of almost invariant sets is not symmetric.
However, below I prove that if $X$ and $Y$come from splittings of $G$,
then crossing is symmetric.
Here is the key lemma.

\begin{lem}
\label{HANDKFINITE}
Let $G$ be any group with subgroups $H$ and $K$.
Suppose $Y$ is a \mbox{$K$-almost} invariant set arising from a splitting of $G$ over $K$.
Further, suppose $Y$ contains some nonempty subset $X'$ that is stabilized by $H$
(equivalently, $Y$ contains at least one right $H$-coset).
Take any $g_0 \in Y^*$. Then
\[
	H g_0 \cap Y^*
	= (H \cap K)g_0 \cap Y^*
\]
(so that $H g_0 \cap Y^*$ is both $H$- and $K$-finite).
\end{lem}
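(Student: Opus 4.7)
The plan is to work entirely inside the $G$-tree $T$ for the splitting of $G$ over $K$ that yields $Y$. Let $e$ be the chosen oriented edge, with endpoints $v_-$ and $v_+$ and stabilizer $K$, let $v$ be the base vertex, and let $T_-$ and $T_+$ be the two components of $T$ obtained by cutting along $e$, containing $v_-$ and $v_+$ respectively; then $Y=\{g\in G\mid gv\in T_-\}$ and $Y^*=\{g\in G\mid gv\in T_+\}$. Pick any $g_1\in X'$; since $X'$ is nonempty and $H$-invariant, the full right coset $Hg_1\subseteq X'\subseteq Y$, so the $H$-orbit $H\cdot(g_1v)$ lies in $T_-$. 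The hypothesis $g_0\in Y^*$ becomes $g_0v\in T_+$, and the goal reduces to showing that any $h\in H$ with $hg_0v\in T_+$ must fix $e$, i.e.\ lie in $K$.

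The first step is to take the convex hull $S\subseteq T_-$ of the orbit $H\cdot(g_1v)$, an $H$-invariant subtree, and let $p$ be the point of $S$ nearest to $v_-$. For any $w\in T_+$, the projection onto $T_-$ satisfies $\pi_{T_-}(w)=v_-$ (any path from $T_+$ to $T_-$ crosses $e$ at $v_-$), so $\pi_S(w)=\pi_S(v_-)=p$; applying this to $w=g_0v$ and $w=hg_0v$ gives $\pi_S(g_0v)=\pi_S(hg_0v)=p$. Because $h$ is a tree isometry preserving $S$, projection onto $S$ commutes with $h$, and hence $h(p)=h(\pi_S(g_0v))=\pi_S(hg_0v)=p$. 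Thus $h$ fixes the point $p\in T_-$.

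Now look at the geodesic $\gamma$ from $p$ to $g_0v$. Since it runs from $T_-$ to $T_+$, it must traverse $e$, so it decomposes as a length-$m$ segment from $p$ to $v_-$ in $T_-$ (with $m:=d(p,v_-)$), the edge $e$, and a length-$n$ segment from $v_+$ to $g_0v$ in $T_+$. The geodesic $\gamma'$ from $p$ to $hg_0v$ admits the same decomposition with the same $m$, and its total length equals $d(p,g_0v)$ because $h$ is an isometry fixing $p$, so the final segment has length $n$ as well. The isometry $h$ carries $\gamma$ to $\gamma'$ and fixes their common starting point $p$, so by uniqueness of geodesics in a tree it sends the vertex at distance $m$ along $\gamma$ to the vertex at distance $m$ along $\gamma'$, i.e.\ $hv_-=v_-$; similarly $hv_+=v_+$. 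Since $G$ acts on $T$ without edge inversions, $he=e$, so $h\in H\cap K$. This establishes $Hg_0\cap Y^*\subseteq(H\cap K)g_0\cap Y^*$; the reverse inclusion is trivial, and $(H\cap K)g_0$ is contained in both the single $H$-coset $Hg_0$ and the single $K$-coset $Kg_0$, so the common set is automatically both $H$- and $K$-finite. The conceptual obstacle being avoided is that in the finitely generated case one would argue via the Cayley graph; the point here is that the convex hull $S$ and the nearest-point projection onto $S$ are defined in any simplicial tree without any hypothesis of local finiteness, so the argument needs no assumption on $G$, $H$, or $K$.
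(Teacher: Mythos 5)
Your proof is correct. Both arguments live entirely inside the Bass--Serre tree $T$ for the splitting, but the technical device you use differs from the paper's. The paper's proof chooses an $x\in X'$ for which the path $[xw,g_0w]$ realizes the minimum distance from $X'w$ to $(Hg_0\cap Y^*)w$, and then argues by contradiction that the distance from $xw$ to $e$ along that path is preserved by $h$, concluding $he=e$ directly. You instead package the $H$-invariance into the convex hull $S$ of the orbit $H\cdot(g_1v)$, observe that nearest-point projection onto $S$ collapses everything on the $T_+$ side to a single point $p$, and deduce $hp=p$ from equivariance of the projection; geodesic uniqueness from the fixed point $p$ across $e$ then forces $hv_\pm=v_\pm$ and hence $he=e$. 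The two routes are cousins (both exploit that $h$ is an isometry preserving an $H$-invariant subtree sitting strictly in $T_-$, and that every geodesic from $T_-$ to $T_+$ crosses $e$), but yours outsources the minimality argument to standard facts about projections onto subtrees, whereas the paper carries it out by hand via the $d_1=d_2$ comparison. A small stylistic remark: once you know $hv_-=v_-$ and $hv_+=v_+$, the conclusion $he=e$ is automatic because $e$ is the unique edge joining those two vertices, so the appeal to ``no edge inversions'' is not actually needed at that point.
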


\begin{proof}
Clearly $(H \cap K) g_0 \cap Y^*$ is a subset of $H g_0 \cap Y^*$.
In the remaining part of the proof, we show that $H g_0 \cap Y^*$ is a subset of $(H \cap K) g_0 \cap Y^*$.

Since $Y$ is a standard almost invariant set arising from a splitting, there exists a $G$-tree $T$
with an edge $e$ and a vertex $w$,
such that $K = Stab(e)$ and
\[
	Y = \{g \in G | e \text{ points away from } gw \}.
\]
We measure the distance between two vertices in $T$ by counting the number of edges in a simple path connecting them.
Since $H g_0 \cap Y^*$ is contained in a single $H$-coset, and since $X$ is stabilized by $H$,
we can choose $x \in X'$ such that the path $[xw, g_0w]$ realizes the minimum distance from $X'w$ to $(H g_0 \cap Y^*)w$.
Let $D$ denote this distance.
See Figure~\ref{fig:TreeHg0-1} for the basic picture.

Take any $h \in H$ such that $h g_0 \in Y^*$.
We will show that $h \in K$.
Since $h$ stabilizes $X'$,
multiplying the path $[xw, g_0w]$ on the left by $h$ gives another path from $X'w$ to $(H g_0 \cap Y^*)w$ of length $D$.
As $X' \subset Y$, both paths must pass pass through $e$.
Let $d_1$ denote the distance from $xw$ to $e(0)$.
Let $d_2$ denote the distance from $hxw$ to $e(0)$.
I claim that $d_1 = d_2$.
If $d_1 < d_2$, then $[xw, hg_0w]$ would be a path from $X'w$ to $(H g_0 \cap Y^*)w$ with length strictly less than $D$.
Similarly, if $d_2 < d_1$, then $[hxw, g_0w]$ would be a path from $X'w$ to $(H g_0 \cap Y^*)w$ with length strictly less than $D$.
Hence $d_1 = d_2$.
It follows that $e = he$.
As $K$ is the stabilizer of $e$, this implies $h \in K$, as desired.
This concludes the proof that $H g_0 \cap Y^*$ is equal to $(H \cap K) g_0 \cap Y^*$.
\end{proof}

\begin{figure}[!htbp]
	\centering
	\begin{minipage}[h]{0.48\linewidth}
		\includegraphics[width=65mm]{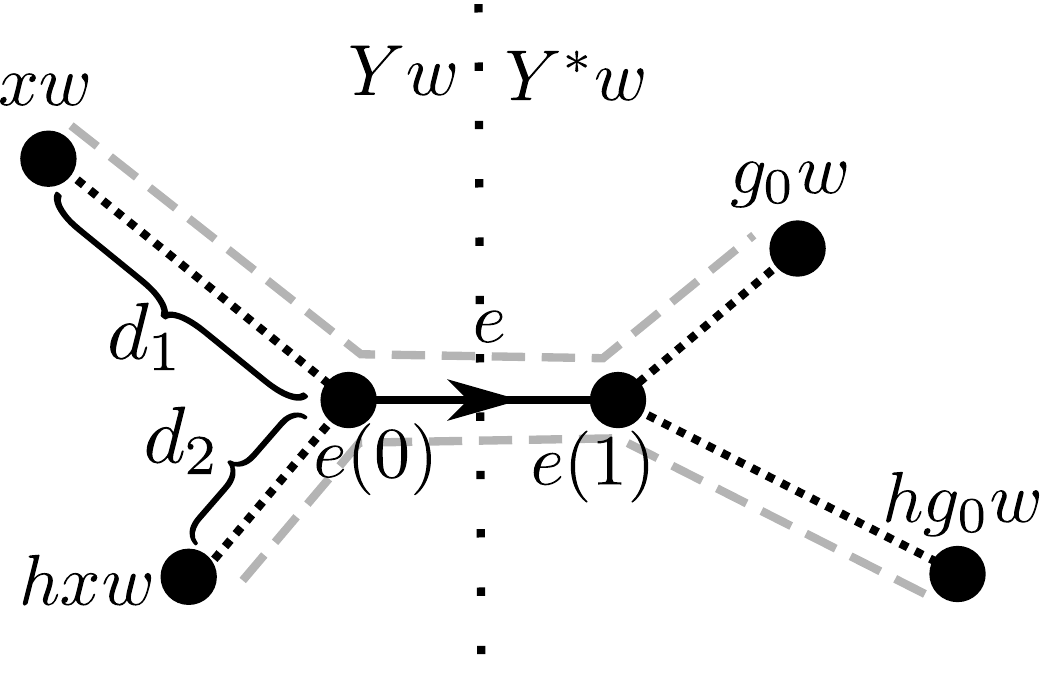}
	\end{minipage}
	\hfill
	\begin{minipage}[h]{0.48\linewidth}
		\caption[Proof of Key Lemma]{
		\label{fig:TreeHg0-1}
		Proof of Lemma~\ref{HANDKFINITE}.
		}
	\end{minipage}
\end{figure}

We can use the lemma to prove symmetry of crossings for almost invariant sets that come from splittings.

\begin{prop}
\label{SYMMETRIC}
Let $G$ be any group with subgroups $H$ and $K$.
Suppose $X$ is any nontrivial $H$-almost invariant set,
and $Y$ is a standard $K$-almost invariant set arising from a splitting of $G$ over $K$.
If $X$ crosses $Y$, then $Y$ crosses $X$.
\end{prop}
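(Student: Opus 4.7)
I would prove this by contradiction. Assume $X$ crosses $Y$ but $Y$ does not cross $X$, so some corner of the pair $(X,Y)$ is $H$-finite. Replacing $X$ by $X^*$ or $Y$ by $Y^*$ does not change the four corners of $(X,Y)$ as a collection of sets, and $Y^*$ is also a standard $K$-almost invariant set arising from the same splitting (via the reverse orientation of the base edge). Thus I may assume $X \cap Y^*$ is $H$-finite. Write $X \cap Y^* \subseteq Hg_1 \cup \cdots \cup Hg_n$, discard any $Hg_j$ that fails to meet $X \cap Y^*$, and pick representatives $g_j \in Hg_j \cap X \cap Y^*$; in particular each $g_j \in Y^*$.

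The main step is to produce a nonempty $H$-stabilized subset $X' \subseteq Y$, which is precisely the hypothesis needed by Lemma~\ref{HANDKFINITE}. Since $H$ stabilizes $X$ on the left, $X$ decomposes as a disjoint union of right $H$-cosets, and since $X$ is nontrivial this union is infinite. Among the $H$-cosets making up $X$, the only ones that can meet $Y^*$ are those equal to some $Hg_j$; hence all but at most $n$ of them lie entirely in $Y$. Pick any such $H$-coset and call it $X'$: it is nonempty, $H$-stabilized, and contained in $Y$.

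Applying Lemma~\ref{HANDKFINITE} with this $X'$ and $g_0 = g_j$ yields $Hg_j \cap Y^* = (H \cap K)g_j \cap Y^* \subseteq Kg_j$. Summing over $j$,
\[
	X \cap Y^* \;\subseteq\; \bigcup_{j=1}^n (Hg_j \cap Y^*) \;\subseteq\; \bigcup_{j=1}^n Kg_j,
\]
so $X \cap Y^*$ is $K$-finite, contradicting that $X$ crosses $Y$. I expect the main obstacle to be the middle step --- producing the $X'$ inside $Y$ --- which rests on the observation that $H$-finiteness of a single corner combined with the $H$-infinitude of $X$ leaves infinitely many $H$-cosets of $X$ disjoint from $Y^*$ and hence trapped inside $Y$. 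Once that subset is in hand, Lemma~\ref{HANDKFINITE} does essentially all of the work.
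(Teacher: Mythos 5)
Your proof is correct and follows essentially the same approach as the paper: reduce to the case $X \cap Y^*$ is $H$-finite, exhibit a nonempty $H$-stabilized subset of $Y$ so that Lemma~\ref{HANDKFINITE} applies, and conclude that $X \cap Y^*$ is $K$-finite. The only cosmetic difference is that you take $X'$ to be a single $H$-coset of $X$ contained in $Y$, whereas the paper takes $X' = X - \coprod_i Hg_i$; both satisfy the lemma's hypothesis, so the arguments are interchangeable.
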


\begin{proof}
If $Y$ does not cross $X$, then one of the corners of $(X,Y)$ is $H$-finite.
Without loss of generality (after possibly replacing $X$ by $X^*$ or $Y$ by $Y^*$),
$X \cap Y^*$ is $H$-finite.
This means that we can choose finitely many $g_i \in X \cap Y^*$ such that
$X \subset Y \cup H g_1 \cup \ldots \cup H g_r$.
Let $X' := X - \coprod\limits_{i=1}^{r} H g_i$, so that $X' \subset Y$.
Since $X$ is nontrivial, $X'$ is nonempty.
$X'$ is also stabilized by $H$.
Whereas $Y$ comes from a splitting,
\ref{HANDKFINITE} proves that $Hg_i \cap Y^* = (H \cap K)g_i \cap Y^*$, for all $i$.
Hence $X \cap Y^*$ is $K$-finite, so that $X$ does not cross $Y$.
\end{proof}

\begin{cor}
Intersection number of a pair of splittings (as defined on Page \pageref{DEFNintersectionnumber}) is well-defined,
even if the ambient group is not finitely generated.
\end{cor}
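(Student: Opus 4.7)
The plan is to verify two things about the number $i(\sigma, \tau)$ defined on Page~\pageref{DEFNintersectionnumber}: first, independence of the count from the choice of standard almost invariant sets $X$ and $Y$ representing $\sigma$ and $\tau$; second, the symmetry $i(\sigma, \tau) = i(\tau, \sigma)$. Both parts are straightforward given Proposition~\ref{SYMMETRIC}, which supplies the one nontrivial input.

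For independence, I would note that any two standard $H$-almost invariant sets arising from $\sigma$ are related by a finite sequence of three elementary moves: replacement of $X$ by a translate $g_0 X$ (with $H$ replaced by $g_0 H g_0^{-1}$), replacement by the complement $X^*$, and replacement by some $X' \overset{H-a}{=} X$. I would check that each move preserves the count. Complementation merely permutes the four corners of $(hX, Y)$, so it does not affect which $h$ induce crossing. Almost-equality changes each corner by an $hHh^{-1}$-finite set, which cannot turn a $K$-infinite corner into a $K$-finite one. For the translate move, the map $K h (g_0 H g_0^{-1}) \mapsto K (h g_0) H$ is a bijection on double cosets under which $h(g_0 X)$ crossing $Y$ corresponds to $(h g_0) X$ crossing $Y$. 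The analogous argument on the $Y$ side completes independence.

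For symmetry, I would apply Proposition~\ref{SYMMETRIC} to $gX$ (an almost invariant set arising from a splitting) and $Y$ to conclude that $gX$ crosses $Y$ if and only if $Y$ crosses $gX$. Translating by $g^{-1}$ carries each corner of $(Y, gX)$ to a corner of $(g^{-1}Y, X)$ and converts $gHg^{-1}$-finiteness into $H$-finiteness, so $gX$ crosses $Y$ if and only if $g^{-1}Y$ crosses $X$. The inversion $KgH \mapsto Hg^{-1}K$ is a bijection between $(K,H)$-double cosets and $(H,K)$-double cosets in $G$, and it carries cosets contributing to $i(\sigma, \tau)$ to cosets contributing to $i(\tau, \sigma)$, yielding $i(\sigma, \tau) = i(\tau, \sigma)$.

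The main obstacle, namely symmetry of the crossing relation without finite generation, was already overcome by the Key Lemma~\ref{HANDKFINITE} and Proposition~\ref{SYMMETRIC}; after that, the corollary is essentially a bookkeeping exercise in the action of $G$ on the relevant double cosets.
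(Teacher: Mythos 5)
Your proposal is correct, and it makes explicit a deduction the paper leaves entirely unwritten: the corollary is stated with no proof, the intended reading being that once crossing is symmetric (Proposition~\ref{SYMMETRIC}), Scott's argument for well-definedness goes through verbatim. Your two-part decomposition --- independence of the representing almost invariant sets, then symmetry of the double-coset count via the bijection $KgH \mapsto Hg^{-1}K$ and the identity ``$gX$ crosses $Y$ iff $g^{-1}Y$ crosses $X$'' --- is the natural way to fill in the details, and it is more careful than anything the paper provides.

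The one step that passes by too quickly is in the almost-equality move. The assertion that an $hHh^{-1}$-finite perturbation of a corner ``cannot turn a $K$-infinite corner into a $K$-finite one'' is not automatic: a single $hHh^{-1}$-coset has no a priori reason to be $K$-finite, so removing part of one from a $K$-infinite set could in principle leave a $K$-finite remainder. What rescues the claim is exactly Proposition~\ref{SYMMETRIC} (equivalently, Lemma~\ref{HANDKFINITE}): since $hX$, $hX'$, and $Y$ all come from splittings, a corner of $(hX,Y)$ is $K$-finite if and only if it is $hHh^{-1}$-finite, and $hHh^{-1}$-finiteness is manifestly stable under $hHh^{-1}$-finite changes. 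You should say this explicitly; with that one sentence added, the argument is complete and needs no finite generation anywhere.
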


\section{Examples of Infinite Intersection Number}
\label{IntersectionNumber}

Scott and Swarup have shown that the intersection number of the two splittings
of a finite group over finitely generated subgroups is finite
(see Lemma 2.7 of \cite{Scott1998}).
In the spirit of this paper, one might ask if we can eliminate the hypothesis of $G$ being finitely generated.
The answer is, definitively, ``no.''
In fact there exist free product splittings induced by simple curves on surfaces,
with infinite intersection number:

\begin{exmp}
\label{INFINTERNUMBER}
Let $S$ be an infinite strip with countably many punctures:
\[
	S := [-\frac{1}{2}, \frac{1}{2}] \times \mathbb{R} - \{0\} \times \mathbb{Z}.
\]
Take $l_1$ and $l_2$ as shown in Figure~\ref{fig:Int-inf}.
Let $l_1^+$ be a regular neighborhood of the part of $S$ lying above $l_1$,
and define $l_1^-$, $l_2^+$, and $l_2^-$ similarly.
By Van Kampen's theorem, we have the following two splittings of $G := \pi_1(S)$ over the trivial group.
\[
	\sigma: G \cong \pi_1(l_1^+) *_{\{1\}} \pi_1(l_1^-)
	= \mathbb{F}^\mathbb{Z} * \mathbb{F}^\mathbb{Z}
\]
\[
	\tau: G \cong \pi_1(l_2^+) *_{\{1\}} \pi_1(l_2^-)
	= \mathbb{F}^\mathbb{Z} * \mathbb{F}^\mathbb{Z}
\]
Here, the intersection number of $\sigma$ and $\tau$ is visibly infinite.
\end{exmp}

\begin{figure}[!htbp]
	\centering
	\begin{minipage}[h]{0.48\linewidth}
		\includegraphics[width=75mm]{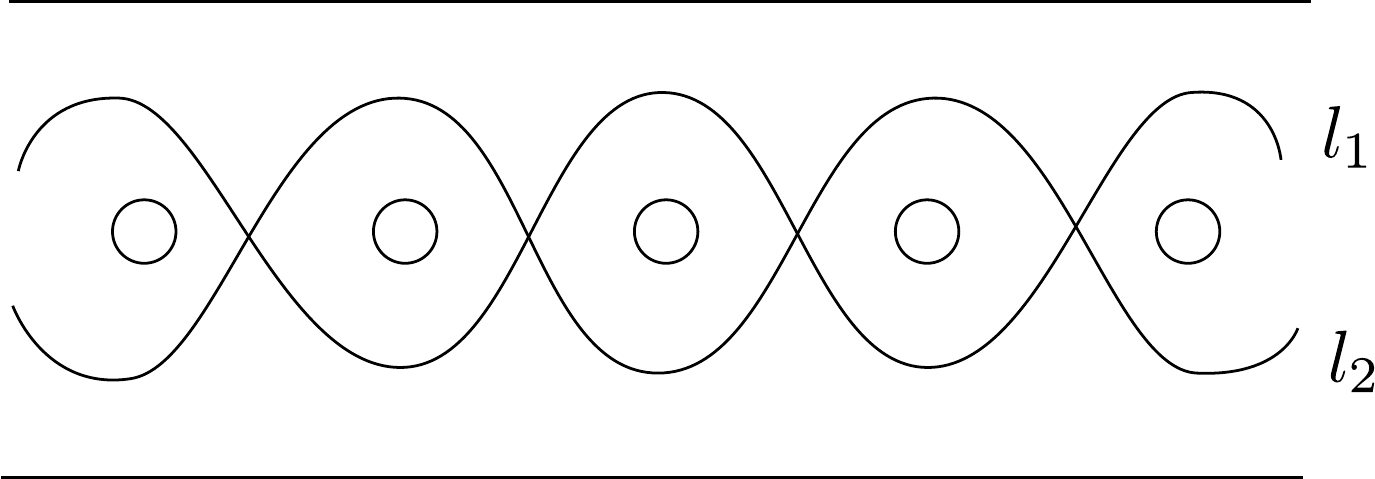}
	\end{minipage}
	\hfill
	\begin{minipage}[h]{0.48\linewidth}
		\caption[Infinite intersection number]{
		\label{fig:Int-inf}
		Two curves yielding a pair of splittings with infinite intersection number.
		}
	\end{minipage}
\end{figure}

We have a similar example exhibiting infinite self-intersection number
for an almost invariant set not that is not associated to a splitting.

\begin{exmp}
\label{INFSELFINTER}
Take $S$ and $G$ as in the previous example.
We can find a $\{1\}$-almost invariant subset of $G$ with infinite self-intersection number.
See Figure~\ref{fig:Int-selfinf}.
\end{exmp}

\begin{figure}[!htbp]
	\centering
	\begin{minipage}[h]{0.48\linewidth}
		\includegraphics[width=75mm]{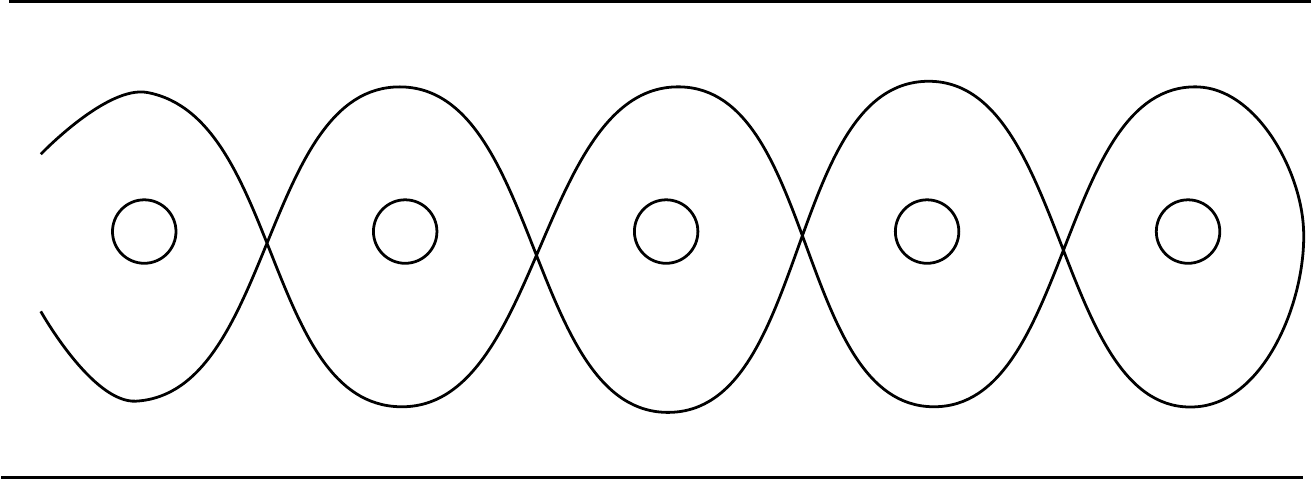}
	\end{minipage}
	\hfill
	\begin{minipage}[h]{0.48\linewidth}
		\caption[Infinite self-intersection number]{
		\label{fig:Int-selfinf}
		A curve yielding a $\{1\}$-almost invariant set with infinite self-intersection number.
		}
	\end{minipage}
\end{figure}

\section{Almost Inclusion}
\label{GoodPosition}

Let $\{X_j | j \in J\}$ be a collection of $H_j$-almost invariant subsets of a group $G$,
and let $\Sigma := \{gX_j, gX_j^* | g \in G, j \in J \}$.
In this section, we prove that $\leq$ (from Definition~\ref{DEFNPO}) defines a partial order on $\Sigma$.

The following lemma, proved in a preprint by Scott and Swarup,
shows that if an $H$-almost invariant set is $H$-almost equal to a $K$-almost invariant set,
then $H$ and $K$ are commensurable.

\begin{lem}
\label{COMMENSURABLE}
Let $G$ be any group with a nontrivial $H$-almost invariant set $X$ and a nontrivial $K$-almost invariant set $Y$.
If $X \overset{H-a}{=} Y$, then $H$ and $K$ are commensurable subgroups of $G$.
\end{lem}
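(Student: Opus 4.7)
The plan is to prove separately that $[K:H\cap K]<\infty$ and $[H:H\cap K]<\infty$, so that $H\cap K$ is a finite-index subgroup of both $H$ and $K$. Write $S:=X\triangle Y$; by hypothesis $S$ is $H$-finite, say $S\subseteq HF=\bigsqcup_{i=1}^{n}Hg_i$. Since $H$ stabilizes $X$ and $K$ stabilizes $Y$, the subgroup $H\cap K$ stabilizes both sets, hence stabilizes $S$.

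I would rely on two tools. The first is a preliminary observation that a subset $A\subseteq G$ which is simultaneously $H$-finite and $K$-finite must be $(H\cap K)$-finite: indeed, any nonempty intersection $Hg\cap Kg'$ is a single right $(H\cap K)$-coset, because picking $h_0\in Hg\cap Kg'$ gives $Hg=Hh_0$ and $Kg'=Kh_0$ and hence $Hg\cap Kg'=(H\cap K)h_0$. The second is an auxiliary lemma I would prove: if $W$ is a nontrivial $K$-almost invariant subset of $G$ that is also $L$-almost invariant for some subgroup $L\leq K$, then $[K:L]<\infty$. The argument passes to the coset spaces $L\backslash G$ and $K\backslash G$ with their right $G$-actions; since $W$ is $K$-stabilized, the descended sets satisfy $\bar W=\pi^{-1}(\bar W_K)$ under the natural $[K:L]$-to-one projection $\pi$, giving $|\bar Wg\triangle\bar W|=[K:L]\cdot|\bar W_Kg\triangle\bar W_K|$ by equivariance. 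Nontriviality of $W$ together with the transitive right $G$-action on $K\backslash G$ supplies some $g$ with $|\bar W_Kg\triangle\bar W_K|\geq 1$, and the required finiteness of $|\bar Wg\triangle\bar W|$ under $L$-AI then forces $[K:L]<\infty$.

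To establish $[K:H\cap K]<\infty$ I would apply the auxiliary lemma with $W=Y$ and $L=H\cap K$, so it suffices to show $Y$ is $(H\cap K)$-almost invariant. Stabilization is immediate, and for each $g\in G$, the set $Yg\triangle Y$ is $K$-finite by hypothesis. It is also $H$-finite: using $Y=X\triangle S$, one computes $Yg\triangle Y=(Xg\triangle X)\triangle(Sg\triangle S)$, where $Xg\triangle X$ is $H$-finite by the $H$-almost invariance of $X$, and $Sg\triangle S\subseteq Sg\cup S\subseteq HF\cup HFg$ is $H$-finite. The preliminary observation then makes $Yg\triangle Y$ an $(H\cap K)$-finite set, so $Y$ is $(H\cap K)$-almost invariant, and the auxiliary lemma yields $[K:H\cap K]<\infty$.

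The hard part will be the symmetric direction $[H:H\cap K]<\infty$. The analogous route, applying the auxiliary lemma to $X$ with $L=H\cap K\leq H$, requires $X$ to be $(H\cap K)$-almost invariant and hence $Xg\triangle X$ to be $K$-finite; this does not follow immediately from the identity $Xg\triangle X=(Yg\triangle Y)\triangle(Sg\triangle S)$ since $Sg\triangle S$ is only manifestly $H$-finite. My plan is to exploit the already-established $[K:H\cap K]<\infty$ --- which makes $K$-finiteness equivalent to $(H\cap K)$-finiteness --- together with a coset-by-coset analysis within each $Hg_i$, using the interaction between the $(H\cap K)$-partition of $Hg_i$ and the $K$-orbit structure inherited from $Y$, to conclude that $S$ itself is $(H\cap K)$-finite. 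Once $S$ is known to be $(H\cap K)$-finite, $Sg\triangle S$ becomes $K$-finite, so $Xg\triangle X$ is $K$-finite, hence $(H\cap K)$-finite by the preliminary observation, and a second application of the auxiliary lemma produces $[H:H\cap K]<\infty$.
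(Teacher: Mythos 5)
Your first half is correct but takes a different route from the paper. The paper shows $K$ is $H$-finite directly: since $Y$ is nontrivial and $K$-stabilized one can pick $Ku\subset Y$ and $Kv\subset Y^*$, and then $Kv\subset Y(u^{-1}v)\triangle Y$, which is $H$-finite because $Yg\overset{H-a}{=}Y$ for all $g$. You instead package the use of almost invariance into an auxiliary coset-space lemma and upgrade $Y$ to an $(H\cap K)$-almost invariant set; this is valid and is arguably cleaner, since the ``preliminary observation'' ($H$-finite $\cap$ $K$-finite $\Rightarrow$ $(H\cap K)$-finite) does real work.

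The second half has a genuine gap. Your plan is to establish that $S=X\triangle Y$ is $(H\cap K)$-finite by ``coset-by-coset analysis within each $Hg_i$,'' but as sketched this cannot go through: within a single coset $Hg_i$, the $(H\cap K)$-cosets are in bijection with certain $K$-cosets (each $Kg\cap Hg_i$ is a single $(H\cap K)$-coset or empty), and nothing rules out infinitely many of those $K$-cosets lying in $Y^*$ (or $Y$), which would make $S\cap Hg_i$ the union of infinitely many $(H\cap K)$-cosets. In fact $S$ being $(H\cap K)$-finite is essentially equivalent to $[H:H\cap K]<\infty$ given that $S$ is $H$-finite, so trying to prove it by local coset-counting is close to circular. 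The missing idea --- the content of the paper's ``similar argument'' --- is to apply the $K$-almost invariance of $Y$ to an $H$-coset rather than a $K$-coset: since $X$ and $X^*$ are both $H$-infinite, a union of $H$-cosets, and $S$ is $H$-finite, one can choose $Hg_1\subset Y$ and $Hg_0\subset Y^*$; then $Hg_0=Hg_1(g_1^{-1}g_0)\subset Y(g_1^{-1}g_0)$ while $Hg_0\cap Y=\emptyset$, so $Hg_0\subset Y(g_1^{-1}g_0)\triangle Y$, which is $K$-finite. Hence $H$ is $K$-finite, which gives $[H:H\cap K]<\infty$ directly (and only then, as a corollary, that $S$ is $K$-finite and $(H\cap K)$-finite). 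If you insert this step in place of the coset-by-coset sketch, the remainder of your argument --- deducing $Xg\triangle X$ is $K$-finite, hence $(H\cap K)$-finite, and invoking your auxiliary lemma --- closes the proof.
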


\begin{proof}
$X \overset{H-a}{=} Y$ immediately implies that
$Xg  \overset{H-a}{=} Yg$, for all $g \in G$.
As $X$ is $H$-almost invariant, we have
$Xg \overset{H-a}{=} X$, and hence
$Yg \overset{H-a}{=}  Y,$ for all $g \in G$.
As $Y$ is $K$-almost invariant,
$K$ stabilizes $Y$, so each of $Y$ and $Y^*$ is a union of cosets $Kg$ of $K$ in $G$.

Since $Y$ is nontrivial, we can choose $u, v \in G$ such that
$Ku \subset Y$ (equivalently, $K \subset Y u^{-1}$) and
$Kv \subset Y^*$.
Recall that by the preceding paragraph, $Y(u^{-1}v) \overset{H-a}{=} Y$.
Note that $Kv$ lies in the symmetric difference of $Y(u^{-1}v)$ and $Y$,
so $Kv$ must be $H$-finite.
Hence $K$ is also $H$-finite.
We can write $K \subset \coprod\limits_{i=1}^r H g_i$, where $r$ is minimal.
We have $K = \coprod\limits_{i=1}^r (K \cap H) g_i$.
As $K$ is the union of finitely many $(K \cap H)$-cosets,
it follows that $[K: K \cap H] < \infty$.

A similar argument shows that $K \cap H$ is finite index in $H$. 
Hence $H$ and $K$ are commensurable subgroups of $G$.
\end{proof}

Next we show that if $X$ and $Y$ arise from splittings,
then their stabilizers are actually equal.

\begin{lem}[Modified from Lemma 2.2 of \cite{ScottSwarup2000}]
\label{EQUIV}
Let $X$ be an $H$-almost invariant subset arising from a splitting of $G$ over $H$,
and let $Y$ be a $K$-almost invariant set arising from a splitting of $G$ over $K$.
\begin{enumerate}
	\item
	If two corners of the pair $(X,Y)$ are $H$-finite, then $H = K$.
	\item
	If two corners of the pair $(X,gX)$ are $H$-finite, then $g$ normalizes $H$.
\end{enumerate}
\end{lem}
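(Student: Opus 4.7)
My plan is to prove (1) and deduce (2) as an immediate corollary.

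For (1), I first reduce to the case $X \overset{H-a}{=} Y$. Of the six ways to choose two of the four corners of $(X,Y)$, the opposite-corners case $X \cap Y^*$ and $X^* \cap Y$ gives $X \overset{H-a}{=} Y$ directly; the other opposite-corners case $X \cap Y$ and $X^* \cap Y^*$ reduces to the first by replacing $Y$ with $Y^*$, which still arises from $\sigma_Y$ via the reversed edge; and any adjacent-corners case would make $X$ or $Y$ itself $H$-finite, contradicting the fact that almost invariant sets from non-trivial splittings are always $H$-infinite (the start-side of the relevant edge in the $G$-tree contains infinitely many vertices, contributing infinitely many right $H$-cosets).

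Next I would promote $H$-finiteness of each corner to $(H \cap K)$-finiteness via Lemma~\ref{HANDKFINITE}. Writing $X \cap Y^* \subset \bigcup_{i=1}^{r} H g_i$, the set $X' := X \setminus \bigcup H g_i$ is a non-empty $H$-stabilized subset of $Y$. Lemma~\ref{HANDKFINITE} then gives $H g_i \cap Y^* = (H \cap K) g_i \cap Y^*$ for each $i$, so $X \cap Y^*$ is $(H \cap K)$-finite. A symmetric application (using $Y^*$ in place of $Y$ and an $H$-stabilized subset of $X^*$ inside $Y^*$) handles $X^* \cap Y$. Hence $X \triangle Y$ is $(H \cap K)$-finite, and Lemma~\ref{COMMENSURABLE} gives commensurability of $H$ and $K$.

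I would then upgrade commensurability to $H = K$ by showing $H \subset K$ (the inclusion $K \subset H$ follows symmetrically). For $h \in H$, left multiplication by $h$ preserves $(H \cap K)$-finiteness: $h(H \cap K) \subset H$ lies in the finite union of right $(H \cap K)$-cosets making up $H$, using $[H : H \cap K] < \infty$. From $h(X \triangle Y) = X \triangle hY$ this implies $hY \triangle Y$ is $(H \cap K)$-finite, hence $K$-finite. If $\sigma_Y$ is not a trivially ascending HNN extension, a tree argument in $T_Y$ (in the spirit of Lemma~\ref{HANDKFINITE}) shows that the symmetric difference $Y_{e_Y} \triangle Y_{h e_Y}$ is $K$-infinite unless $h e_Y = e_Y$, forcing $h \in K$. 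If $\sigma_Y$ is trivially ascending HNN, then $K$ is normal in $G$ with $G/K \cong \mathbb{Z}$; the composition $H \hookrightarrow G \twoheadrightarrow G/K$ has finite image $H/(H \cap K)$, which must be trivial as $\mathbb{Z}$ is torsion-free, giving $H \subset K$. Part (2) is the immediate special case $Y := gX$, $K := gHg^{-1} = Stab(gX)$: the hypothesis gives two $H$-finite corners of $(X, gX)$, so part (1) yields $gHg^{-1} = H$.

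The hardest step will be the tree argument in the non-trivially-ascending case, namely establishing that in $T_Y$, distinct directed edges $e_Y \ne h e_Y$ always produce a $K$-infinite symmetric difference $Y_{e_Y} \triangle Y_{h e_Y}$. Anti-parallel or non-collinear configurations of the two edges force a full infinite branch of $T_Y$ into the symmetric difference, yielding infinitely many distinct $K$-cosets; only the parallel configuration on a common line has a finite symmetric difference, and that is precisely what characterizes the trivially ascending HNN setting handled separately.
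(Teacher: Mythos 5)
The overall skeleton you use is the same as the paper's: reduce to $X \overset{H-a}{=} Y$, invoke Lemma~\ref{COMMENSURABLE} to get commensurability, then argue that each $h \in H$ must actually stabilize $Y$ (and deduce part (2) from part (1) via $Y := gX$). Your reduction step is fine and more carefully spelled out than the paper's; the detour through Lemma~\ref{HANDKFINITE} to get $(H\cap K)$-finiteness is harmless but unnecessary, since Lemma~\ref{COMMENSURABLE} applies directly to the hypothesis $X \overset{H-a}{=} Y$.

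The genuine gap is in what you call the hardest step. You claim that for $\sigma_Y$ not a trivially ascending HNN extension and $he \neq e$ in $T_Y$, the symmetric difference $Y_e \triangle Y_{he}$ is automatically $K$-infinite, with ``parallel translates on a common line'' being exactly the trivially ascending HNN case. This geometric dichotomy is false. Take $G = D_\infty = \mathbb{Z}/2 * \mathbb{Z}/2 = \langle a\rangle * \langle b\rangle$, $K = \{1\}$, $T_Y$ the usual line with vertices at $\mathbb{Z}$, $e = [0,1]$, and $h = ab$ (translation by $2$). Then $he \neq e$ and $Y_e \triangle Y_{he} = \{g : gw \in \{-1,0\}\}$ is a finite set (for $w = 0$ it is $\{1,a\}$), hence $K$-finite — yet $\sigma_Y$ is a free product, not a trivially ascending HNN extension. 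The Bass-Serre tree being a line does not imply the action is by translations only; dihedral actions also have linear trees. So $K$-finiteness of $Y \triangle hY$ alone cannot force $he = e$, and no purely geometric configuration argument in $T_Y$ will close the gap. What rescues the situation (and what your tree sketch never uses) is the algebraic consequence of commensurability: since $[H : H \cap K] < \infty$, some power $h^n$ lies in $K$. The paper exploits exactly this: $hY$ and $Y$ are nested, the cases $hY \subset Y^*$ or $Y^* \subset hY$ contradict $X$ being nontrivial, and a proper inclusion $hY \subsetneq Y$ or $Y \subsetneq hY$ would give an infinite strictly monotone chain $\ldots \subsetneq h^2Y \subsetneq hY \subsetneq Y$, contradicting $h^n Y = Y$. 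That argument is uniform — no case split on trivially ascending HNN is needed, and it is immune to the $D_\infty$ phenomenon because there $h = ab$ has infinite order and hence never belongs to a group commensurable with $\{1\}$. If you replace your step-4 tree argument with this chain argument you recover the paper's proof; as written, step 4 does not hold.
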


\begin{proof}
To prove the first part of the lemma, suppose two corners of the pair $(X,Y)$ are $H$-finite.
Without loss of generality, $X \cap Y^*$ and $X^* \cap Y$ are $H$-finite
(if not, replace $X$ by $X^*$),
so that $X$ and $Y$ are $H$-almost equal.

As $Y$ is a standard almost invariant set arising from a splitting,
$hY$ and $Y$ are nested, for all $h \in G$.
We will now show that $H \subset K$.
Let $h \in H$. If $hY \subset Y^*$ (or $hY^* \subset Y$),
then $X \overset{H-a}{=} X^*$, a contradiction to $G$ being $H$-infinite.
If $hY \subset Y$ but $hY \not= Y$, then we get an infinite chain of inclusions
\[
	\ldots \subset h^nY \ldots \subset hY \subset Y.
\]
As $H$ and $K$ are commensurable, some power of $h$ lies in $K$, so that $h^n Y = Y$ for some $n$.
This implies $hY = Y$, so that $h \in K$.
Similarly, if $Y \subset hY$, we must also have $Y = hY$ and $h \in K$.

A similar argument shows that $K \subset H$. Hence $H = K$.

To prove the second part of the lemma, apply the first part using $Y := gX$.
The first part of the lemma gives $H = K = gHg^{-1}$, so that $g$ normalizes $H$.
\end{proof}

Now we show that if $X$ and $Y$ are standard $H$- and $K$-almost invariant sets arising from non-isomorphic splittings of $G$,
then it is impossible to have $X \overset{H-a}{=} Y$.

\begin{prop}[Modified from Lemma 2.3 of \cite{ScottSwarup2000}]
\label{EQUIVALENT}
Let $X$ and $Y$ be $H$- and $K$-almost invariant sets
arising from splittings $\sigma$ and $\tau$ of $G$ over subgroups $H$ and $K$, respectively.
If two corners of the pair $(X,Y)$ are small, then $\sigma$ and $\tau$ are isomorphic splittings.

Further, at least one of the following holds (after possibly replacing $X$ by $X^*$):
\begin{enumerate}
	\item
	$X \mapsto Y$ induces a $G$-equivariant, order-preserving isomorphism
	\\
	from $(\Sigma(X), \subset)$ to $(\Sigma(Y), \subset)$; or
	\item
	The two splittings are of the form $G = A *_H B$, where $H$ has index $2$ in $A$,
	and there exists $a \in A$ such that $X \mapsto aY$ induces a
	$G$-equivariant, order-preserving isomorphism from $\Sigma(X)$ to $\Sigma(Y)$.
\end{enumerate}
\end{prop}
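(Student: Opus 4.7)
The plan: First I would apply part~(1) of Lemma~\ref{EQUIV} to deduce $H = K$. After possibly replacing $X$ by $X^*$, I may further assume the two small corners are $X \cap Y^*$ and $X^* \cap Y$, so that $X \overset{H-a}{=} Y$.

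The natural candidate for the desired isomorphism is the map $\phi \colon \Sigma(X) \to \Sigma(Y)$ sending $gX \mapsto gY$ and $gX^* \mapsto gY^*$. Well-definedness, bijectivity, $G$-equivariance, and compatibility with $*$ are all immediate from $Stab(X) = H = K = Stab(Y)$ together with the fact that no element of $G$ can interchange $Y$ and $Y^*$ (such an element would invert the base edge of $T_\tau$, which $G$-trees forbid).

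The heart of the argument is showing that $\phi$ preserves inclusion. Suppose $g_1 X \subset g_2 X$, i.e., $g_1 X \cap g_2 X^* = \emptyset$. From the symmetric-difference inclusion
\[
(g_1 Y \cap g_2 Y^*) \,\triangle\, (g_1 X \cap g_2 X^*) \subset (g_1 X \,\triangle\, g_1 Y) \cup (g_2 X \,\triangle\, g_2 Y),
\]
we see that $g_1 Y \cap g_2 Y^*$ is small. Translates of a standard almost invariant set from a splitting are pairwise nested, so exactly one of the four corners of $(g_1 Y, g_2 Y)$ is empty. Three of the four possibilities either yield $g_1 Y \subset g_2 Y$ directly or would force $g_1 Y$ or $g_2 Y^*$ itself to be small, contradicting that $Y$ arises from a nontrivial splitting. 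The remaining possibility, empty corner $g_1 Y^* \cap g_2 Y$ (so $g_2 Y \subsetneq g_1 Y$), is the ``reversal'' case.

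The main obstacle is this reversal case, which is exactly where alternative~(2) intervenes. Because $\phi$ is $G$-equivariant and commutes with $*$, a reversal on one comparable pair propagates uniformly across its $G$-orbit, and Lemma~\ref{EQUIV}(2) shows that any element producing such a reversal must normalize $H$. Tracing the action on $T_\sigma$, the reversal must come from an element $a$ fixing a vertex $v$ and swapping two directed edges at $v$ in the $G$-orbit of the base edge $e$, so the stabilizer of $v$ contains $H$ as an index-$2$ subgroup and the splitting must be an amalgamated free product $G = A *_H B$ with $[A:H] = 2$. Replacing $\phi$ by $gX \mapsto agY$ then produces the $G$-equivariant order-preserving bijection required by alternative~(2). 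In either alternative, $\phi$ is a poset isomorphism of $(\Sigma(X), \subset)$ with $(\Sigma(Y), \subset)$, and applying the reconstruction described in Section~\ref{SUBaifromsplitting} (i.e., Dunwoody's theorem) yields the desired $G$-equivariant isomorphism $T_\sigma \cong T_\tau$, so $\sigma$ and $\tau$ are isomorphic splittings.
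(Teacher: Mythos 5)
Your overall strategy parallels the paper's: deduce $H=K$ via Lemma~\ref{EQUIV}, normalize so $X \overset{H-a}{=} Y$, and analyze the candidate map $\phi \colon gX \mapsto gY$ on $\Sigma(X)$. Your reduction of the order-preservation question to the ``reversal'' case via the symmetric-difference estimate and the nestedness of $\Sigma(Y)$ is clean, and the elimination of two of the four corners because $g_1Y$ or $g_2Y^*$ would otherwise be $H$-finite is correct.

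The gap is in your handling of the reversal case, and it is where the real content of alternative~(2) lives. You assert that ``the reversal must come from an element $a$ fixing a vertex $v$ and swapping two directed edges at $v$,'' but this does not follow just from the existence of some $g$ with $X \subsetneq gX$ while $gY \subsetneq Y$. What that hypothesis gives (translating so $g_1=1$) is that both $X \cap gX^*$ and $X^* \cap gX$ are small, so Lemma~\ref{EQUIV}(2) gives $g \in N_G(H) \setminus H$ --- but $g$ itself need not be elliptic on $T_\sigma$. The paper's own case analysis makes this explicit: the condition ``some $g \in G - H$ has two small corners of $(X,gX)$'' also arises when $\sigma$ is a trivially ascending HNN extension, where $T_\sigma$ is a line and the normalizing element acts as a translation with no fixed vertex. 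In that case one must argue separately (as the paper does) that $X \overset{H-a}{=} Y$ forces the two half-lines to be oriented compatibly, so $\phi$ is already order-preserving and no reversal actually occurs --- you need to rule out a reversal in the HNN case rather than silently assume one doesn't happen. Your argument also only shows that a reversal ``propagates across its $G$-orbit,'' i.e., across the single orbit of pairs $(g_1X, g_2X)$ with fixed $g_1^{-1}g_2$; to conclude that $gX \mapsto agY$ is order-preserving on all of $\Sigma(X)$ you would need to show the reversal is uniform across all orbits of comparable pairs, which you do not address. In short, the passage from ``a reversal exists'' to ``we are in the $A*_H B$, $[A:H]=2$ case and $X\mapsto aY$ works globally'' is the crux of the proposition and is asserted rather than proved; compare the paper's explicit split into the trivially-ascending-HNN subcase and the index-two amalgam subcase.
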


\begin{proof}
By replacing $X$ by $X^*$ if necessary, without loss of generality,
$X \cap Y^*$ and $X^* \cap Y$ are $H$-finite,
i.e. $X \overset{H-a}{=} Y$.
By Lemma~\ref{EQUIV}, we have $H = K$.

A corner of $(X, gX)$ is small if, and only if, the corresponding corner of $(Y, gY)$ is small.
If, for all $g \in G - H$, only one corner of $(X, gX)$ is small (and hence empty),
then the corresponding corner of $(Y, gY)$ must also be empty.
Then $X \mapsto Y$ induces a $G$-equivariant, order-preserving isomorphism
from $\Sigma(X)$ to $\Sigma(Y)$, and hence the splittings are isomorphic by Dunwoody's theorem
(see Section~\ref{SUBdunwoody}).

If there exists $g \in G - H$ such that two corners of $(X, gX)$ are small,
then the trees for $\sigma$ and $\tau$ must each have some vertices of valence two.
There are two cases:
\begin{enumerate}
	\item 
	$\sigma$ is a trivially ascending HNN extension, $G \cong H *_H$.
	Then $T_\sigma$ and $T_\tau$ are lines, $H$ and $K$ are normal in $G$,
	and $G = \langle H, t \rangle$ for some $t \in G$.
	Thus $X \mapsto Y$ induces a $G$-equivariant, order-preserving isomorphism
	from $\Sigma(X)$ to $\Sigma(Y)$.

	\item
	$\sigma$ is an amalgamated free product of the form $G \cong A *_H B$,
	where $H$ has index $2$ in $A$.
	We can write $A = \langle H, a \rangle$.
	Then $X \overset{H-a}{=} aX^*$ and $Y \overset{H-a}{=} aY^*$.
	If the corresponding corners of $(X, aX^*)$ and $(Y, aY^*)$ are empty,
	then $X \mapsto Y$ induces a $G$-equivariant, order-preserving isomorphism
	from $\Sigma(X)$ to $\Sigma(Y)$, so we're done.
	Otherwise, $X \mapsto aY$ does the trick.
\end{enumerate}

\end{proof}

\begin{cor}
\label{GOODPOSITION}
Let $G$ be any group with any collection $\{\sigma_j | j \in J\}$ of pairwise non-isomorphic splittings.
For each $j$, let $X_j$ be a standard $H_j$-almost invariant set arising from $\sigma_j$.
Then $\Sigma := \mycup \limits_{j \in J} \Sigma(X_j)$ is in good position.
\end{cor}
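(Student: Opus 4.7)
The plan is to verify the good-position condition by cases. Fix $A \in \Sigma(X_j)$ and $B \in \Sigma(X_k)$, and suppose two corners of the pair $(A,B)$ are small; I must exhibit an empty corner. All the real work will be handled either by Proposition~\ref{EQUIVALENT} or by an elementary observation about removing two edges from a tree.

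\emph{Case 1: $j \neq k$.} Since $A$ is a standard almost invariant set arising from $\sigma_j$ (possibly with a different choice of base edge/vertex) and $B$ is one arising from $\sigma_k$, Proposition~\ref{EQUIVALENT} applies to the pair $(A,B)$. Two small corners would force $\sigma_j \cong \sigma_k$, directly contradicting the pairwise non-isomorphism hypothesis. Hence two corners cannot be simultaneously small, and the good-position implication holds vacuously.

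\emph{Case 2: $j = k$.} I would work inside the Bass-Serre tree $T := T_{\sigma_j}$ used to construct $X_j$ from a chosen base edge $e$ and base vertex $w$ (Section~\ref{SUBaifromsplitting}). The map $gX_j \mapsto ge$ and $gX_j^* \mapsto \overline{ge}$ identifies $\Sigma(X_j)$ with the directed edge set of $T$; write $e_A,e_B$ for the directed edges corresponding to $A,B$. Under this identification each corner $A^{\varepsilon} \cap B^{\delta}$ becomes the set of $g \in G$ for which the vertex $gw$ lies on a prescribed side of the underlying undirected edge of each of $e_A, e_B$; that is, the four corners correspond to the four ``quadrants'' of $T$ cut out by these two undirected edges. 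If the underlying undirected edges coincide, then $A \in \{B, B^*\}$ and two corners are already empty. Otherwise the two undirected edges are distinct, and removing them from the tree $T$ leaves exactly three connected components. One of the four quadrants therefore contains no vertex of $T$ whatsoever, so a fortiori no translate of $w$, and hence the corresponding corner of $(A,B)$ is empty.

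The argument is short, and there is no serious obstacle: the heavy lifting in Case~1 is already packaged in Proposition~\ref{EQUIVALENT}, and Case~2 rests only on the combinatorial fact that deleting two distinct edges from a tree produces three components. The one point requiring care is the dictionary between corners of $(A,B)$ and quadrants of $T$, but this is immediate from the definition $X_j = \{g : e \text{ points away from } gw\}$ and the $G$-equivariance of the action on $T$.
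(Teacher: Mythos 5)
Your Case~1 is exactly the paper's proof: for distinct $j,k$, two small corners of $(A,B)$ would, by Proposition~\ref{EQUIVALENT}, force $\sigma_j \cong \sigma_k$, contradicting the hypothesis. The paper's proof consists of only this case.

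Your Case~2 is additional content the paper leaves implicit, and it is correct. For $A,B\in\Sigma(X_j)$ with the same $j$, the dictionary $gX_j\leftrightarrow ge$, $gX_j^*\leftrightarrow\overline{ge}$ identifies corners with quadrants, and your pigeonhole count (three components from deleting two distinct edges, four quadrants) shows one corner is literally empty. Since an empty corner is trivially small, whenever two corners of such a pair are small, the always-empty corner is among them; good position follows. One could state this more compactly as ``elements of $\Sigma(X_j)$ are nested because they correspond to directed edges of the Bass--Serre tree,'' which is the fact the paper takes for granted, but the explicit three-components argument is a clean way to see it. Two minor points worth being careful about if you write this up: (i) you correctly distinguish between ``no vertex of $T$'' and ``no translate of $w$,'' since $T$ may have two vertex orbits; (ii) it is worth saying explicitly that an empty corner is small, so that the empty corner qualifies as one of the ``small corners'' in the definition of good position. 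Overall the proposal is correct and slightly more thorough than the paper's one-line proof.
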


\begin{proof}
If there exists $g \in G$ and distinct $j,k \in J$ such that two corners of the pair $(X_j, g X_k)$ are small,
then $\sigma_j$ and $\sigma_k$ are isomorphic splittings (by Proposition~\ref{EQUIVALENT}),
a contradiction to the hypotheses.
\end{proof}

Since $\Sigma$ is in good position, we can define a partial order on $\Sigma$ as follows.

\begin{cor}
\label{PARTIALORDER}
Let $G$ be any group with any collection $\{\sigma_j | j \in J\}$ of pairwise non-isomorphic splittings.
For each $j$, let $X_j$ be an $H_j$-almost invariant set arising from $\sigma_j$.
Let $\Sigma := \mycup \limits_{\sigma \in J} \Sigma(X_j)$.
Define a binary relation $\leq$ on $\Sigma$ by
\[
	A \leq B \Leftrightarrow A \cap B^* \text{ is empty or the only small corner of the pair } (A, B).
\]
Here ``small'' means ``$Stab(A)$-finite'' or equivalently ``$Stab(B)$-finite'' (see Proposition~\ref{SYMMETRIC}).
Then $\leq$ defines a partial order on $\Sigma$.
\end{cor}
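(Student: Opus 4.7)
My plan is to verify reflexivity, antisymmetry, and transitivity directly from the definition of $\leq$, leaning on good position (Corollary~\ref{GOODPOSITION}) and on the nontriviality of almost invariant sets arising from splittings.

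\textbf{Reflexivity and antisymmetry.} Since $A \cap A^* = \emptyset$, every $A \in \Sigma$ satisfies $A \leq A$ via the ``empty'' clause. For antisymmetry, suppose $A \leq B$ and $B \leq A$; then each of $A \cap B^*$ and $A^* \cap B$ is either empty or the unique small corner of $(A,B)$. If one were the unique small corner while the other was also small (including the case of being empty), uniqueness would fail, so both must be empty, forcing $A = B$.

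\textbf{Transitivity.} Assume $A \leq B$ and $B \leq C$. The set-theoretic inclusion
\[
    A \cap C^* \;\subset\; (A \cap B^*) \cup (B \cap C^*)
\]
exhibits $A \cap C^*$ as a subset of a union of two small sets, hence itself small. If $A \cap C^* = \emptyset$, the ``empty'' clause gives $A \leq C$ at once. Otherwise, it suffices to show that no other corner of $(A,C)$ is small, so that $A \leq C$ holds via the ``only small corner'' clause. By good position, any further small corner of $(A,C)$ must in fact be empty, so I need only rule out the three configurations $A \cap C = \emptyset$, $A^* \cap C = \emptyset$, and $A^* \cap C^* = \emptyset$.

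The first is immediate: $A \cap C = \emptyset$ would make $A = A \cap C^*$ small, contradicting the nontriviality of $A$ as an almost invariant set from a splitting. The other two are dispatched by a short case-split on whether each of $A \leq B$ and $B \leq C$ is witnessed by the ``empty'' clause or the ``only small corner'' clause. Under $A^* \cap C = \emptyset$ one has $C \subset A$, and combining with $A \leq B$ forces either $C \subset B$ (whence $B^* \cap C = \emptyset$ becomes a second small corner of $(B,C)$, or $B = C$ and so $A \cap C^* = \emptyset$) or $A^* \cap B = \emptyset$ as a second small corner of $(A,B)$; in every sub-case a uniqueness clause in $A \leq B$ or $B \leq C$ is violated. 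Under $A^* \cap C^* = \emptyset$ one has $C^* \subset A$, so $B^* \cap C^* \subset B^* \cap A = A \cap B^*$ is small; this either furnishes $B^* \cap C^*$ as a second small corner of $(B,C)$, or, in the ``empty'' sub-cases of $A \leq B$ and $B \leq C$, places $C^*$ inside $A \cap B^*$ and thereby makes $C^*$ itself small, contradicting the nontriviality of $C^*$.

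The hard part will be organizing this bookkeeping cleanly; the key non-formal inputs are good position and the fact that $A$, $A^*$, $C$, $C^*$ are all non-small, a standard feature of almost invariant sets arising from splittings (Section~\ref{SUBaifromsplitting}).
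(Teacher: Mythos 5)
There is a genuine gap in your transitivity argument, at precisely the point where the paper's proof invokes Lemma~\ref{HANDKFINITE}. You write that
\[
  A \cap C^* \;\subset\; (A \cap B^*) \cup (B \cap C^*)
\]
``exhibits $A \cap C^*$ as a subset of a union of two small sets, hence itself small.'' But ``small'' is not a uniform notion across different pairs: $A \cap B^*$ being small means it is $Stab(A)$-finite (equivalently $Stab(B)$-finite), while $B \cap C^*$ being small means it is $Stab(B)$-finite (equivalently $Stab(C)$-finite). What the displayed inclusion actually gives you is that $A \cap C^*$ is $Stab(B)$-finite, since both sets on the right are. To conclude that $A\cap C^*$ is a small corner of the pair $(A,C)$, you need it to be $Stab(A)$-finite (equivalently $Stab(C)$-finite), and $Stab(B)$ is neither of those groups. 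Converting ``$Stab(B)$-finite'' into ``$Stab(A)$-finite'' is exactly the content of the paper's transitivity argument: it shrinks $B$ to $B'\subset C$, observes that $A\cap B'^*$ is $Stab(B)$-finite, and then cites Lemma~\ref{HANDKFINITE} (the splitting hypothesis on $A$ is essential here) to upgrade this to $Stab(A)$-finiteness before cutting $A$ down to $A'\subset C$. Without this step --- which is the single place where ``arises from a splitting'' does real work --- your argument would ``prove'' transitivity for arbitrary almost invariant sets over a non-finitely-generated $G$, which is not expected to be true.

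A second, smaller divergence: to rule out a second small corner of $(A,C)$, the paper applies Proposition~\ref{EQUIVALENT} (two small corners force $A$ and $C$ to come from isomorphic splittings, hence the same $\Sigma(X_j)$, and then the inclusions are honest inclusions), whereas you run an ad hoc case-split. Several sub-cases in your split are asserted rather than argued --- for instance, under $C\subset A$ and $A\le B$ you claim this ``forces either $C\subset B$ \dots or $A^*\cap B=\emptyset$,'' which does not follow from the definition of $\le$ without further work, since $A\le B$ only controls the corner $A\cap B^*$ and says nothing directly about $A^*\cap B$. Even granting the first step were repaired, this part of the argument would need to be rebuilt along the lines of Proposition~\ref{EQUIVALENT} or fleshed out considerably. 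Your reflexivity and antisymmetry arguments, by contrast, match the paper's and are fine.
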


\begin{proof}
Reflexivity is obvious. We need to show antisymmetry and transitivity.

To show antisymmetry, suppose $A \leq B$ and $B \leq A$.
Then both $A \cap B^*$ and $B \cap A^*$ are small corners of the pair $(A, B)$.
Since two corners are small, the first inequality now implies $A \cap B^*$ is empty,
while the second implies $B \cap A^*$ is empty.
Hence $A = B$.
Thus $\leq$ satisfies antisymmetry.

To show transitivity, suppose $A \leq B$ and $B \leq C$, where $A$, $B$, and $C$ are all distinct.
We need to show that $A \leq C$.
Since $B \leq C$, we can subtract finitely many $Stab(B)$-cosets from $B$ to obtain $B' \subset C$.
Since $A \leq B$ and $B$ is $Stab(B)$-almost equal to $B'$,
we have $A \cap {B'}^*$ is $Stab(B)$-finite.
By Lemma~\ref{HANDKFINITE}, since $A$ arises from a splitting, $A \cap {B'}^*$ is also $Stab(A)$-finite.
Hence we can subtract finitely many $Stab(A)$-cosets from $A$ to obtain $A' \subset B'$.
It follows that $A' \subset C$.
Since $A$ is $Stab(A)$-almost equal to $A'$ and since $A' \subset C$,
we have $A \cap C^*$ is a small corner of the pair $(A,C)$.

Thus the only way we could possibly fail to have $A \leq C$ is if another corner were small.
If two corners of the pair $(A,C)$ are small, then
\ref{EQUIVALENT} proves that $A$, $B$ and $C$ all must have come from isomorphic splittings of $G$.
Since we assumed no two distinct $j$'s have isomorphic $\sigma_j$'s,
it follows that $A$, $B$ and $C$ are all translates of $X_j$ or $X_j^*$, for the same $j$.
So we must have $A \subset B \subset C$. This completes the proof that $\leq$ satisfies transitivity.
\end{proof}

Now that we've put a partial order $\leq$ on $\Sigma$, we show that the partial order is unique.

\begin{cor}[Uniqueness of the partial order]
\label{UNIQUENESSPO}
Let $G$ be any group with any collection $\{\sigma_j | j \in J\}$ of pairwise non-isomorphic splittings.
Suppose that $\{\sigma'_j | j \in J\}$ is another collection of splittings of $G$, where $\sigma_j \cong \sigma'_j$, for all $j \in J$.
For each $j$, let $X_j$ be a standard $H_j$-almost invariant set arising from $\sigma_j$,
and let $X'_j$ be a standard $H'_j$-almost invariant set arising from $\sigma'_j$.
Let $\Sigma := \mycup \limits_{j \in J} \Sigma(X_j)$,
and let $\Sigma' := \mycup \limits_{j \in J} \Sigma(X'_j)$.
Then there exists a $G$-equivariant, order-preserving isomorphism
from $(\Sigma, \leq)$ to $(\Sigma ', \leq)$.
\end{cor}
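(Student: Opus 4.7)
The plan is to build $\Phi$ orbit-by-orbit, first constructing a $G$-equivariant, $\subset$-preserving bijection $\Phi_j : \Sigma(X_j) \to \Sigma(X'_j)$ for each $j \in J$, and then gluing these into a single map. To construct $\Phi_j$, fix a $G$-equivariant isomorphism between the trees for $\sigma_j$ and $\sigma'_j$ and transport the base vertex and base edge used to define $X_j$ across it. This produces a standard almost invariant set arising from $\sigma'_j$ that equals $X_j$ as a subset of $G$, so by Lemma~\ref{BASICEQUIV} the set $X_j$ is $H'_j$-almost equal to an element of $\Sigma(X'_j)$. In particular two corners of the pair $(X_j, X'_j)$ are small, and Proposition~\ref{EQUIVALENT} delivers $\Phi_j$ with the additional property that $\Phi_j(A)$ is $H_j$-almost equal to $A$ (or, in the exceptional Case~2, to $aA$ for a fixed $a \in G$) for every $A \in \Sigma(X_j)$.

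Next I would verify that the pieces fit together. The sets $\Sigma(X_j)$ are pairwise disjoint: any common element across distinct $j,k$ would, by Proposition~\ref{EQUIVALENT} applied to the corresponding pair of identical translates (a pair with two empty, and hence small, corners), force $\sigma_j \cong \sigma_k$, contrary to hypothesis. The same holds for $\{\Sigma(X'_j)\}$, so the $\Phi_j$ assemble into a $G$-equivariant bijection $\Phi : \Sigma \to \Sigma'$. Order preservation within each $\Sigma(X_j)$ is built into $\Phi_j$, so the substantive work is to check order preservation across indices. For $A \in \Sigma(X_j)$ and $B \in \Sigma(X_k)$ with $j \neq k$, each corner of $(\Phi(A), \Phi(B))$ differs from the corresponding corner of $(A, B)$ by a subset of $(\Phi_j(A) \triangle A) \cup (\Phi_k(B) \triangle B)$, hence by an $H_j$-finite plus $H_k$-finite set. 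By the symmetry of smallness (Proposition~\ref{SYMMETRIC}) and commensurability of the stabilizer pairs $H_j \sim H'_j$ and $H_k \sim H'_k$ (Lemma~\ref{COMMENSURABLE}), the small corners of $(A,B)$ correspond bijectively to the small corners of $(\Phi(A), \Phi(B))$. Since both $\Sigma$ and $\Sigma'$ are in good position (Corollary~\ref{GOODPOSITION}) and $\leq$ is determined purely by which corner is the unique small one (when there is one), the relation $A \leq B$ transfers to $\Phi(A) \leq \Phi(B)$.

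The hard part will clearly be the cross-index order preservation. One must be sure that the finite symmetric-difference distortions introduced by the individual $\Phi_j$'s neither convert a non-small corner into a small one nor create a second small corner that would redirect which side of $\leq$ the pair lies on. The symmetry statement in Proposition~\ref{SYMMETRIC} is exactly what lets us treat ``$H_j$-small'' and ``$H_k$-small'' (and their primed analogues) as a single notion, while good position of $\{\sigma'_j\}$, inherited from pairwise non-isomorphism via Corollary~\ref{GOODPOSITION}, rules out accidental creation of extra small corners on the image side. The Case~2 twist by a fixed element $a \in G$ in the construction of some $\Phi_j$ is harmless, since left-multiplication by $a$ preserves all of the $H_j$-finite bookkeeping used in the cross-index argument.
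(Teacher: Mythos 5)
Your proposal is correct and follows essentially the same route as the paper's own proof: obtain a $G$-equivariant bijection $\Sigma(X_j)\to\Sigma(X'_j)$ from Proposition~\ref{EQUIVALENT} for each $j$, glue these across indices, and verify cross-index order preservation by tracking which corner is small using symmetry of crossing and good position. The extra step you supply at the start --- transporting the base edge and vertex through a $G$-equivariant tree isomorphism so as to verify the hypothesis of Proposition~\ref{EQUIVALENT} --- is a detail the paper's proof leaves implicit, but otherwise the structure is identical.
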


\begin{proof}
By Proposition~\ref{EQUIVALENT}, for all $j$, there exist $g_j \in G$ such that
$X_j \mapsto g_j X'_j$ or $X^*_j \mapsto g_j X'_j$
induces a $G$-equivariant, order-preserving isomorphism
from $\Sigma(X_j)$ to $\Sigma(X'_j)$.
Together, these induce a $G$-equivariant isomorphism from $\Sigma$ to $\Sigma '$.
We need to show that this isomorphism is order-preserving.
As no two of the $\sigma_j$'s are isomorphic,
whenever $A \in \Sigma (X_j)$ and $B \in \Sigma (X_k)$ ($k \neq j$),
at most one corner of $(A, B)$ is small.
If no corner of $(A,B)$ is small, then no corner of $(A', B')$ is small.
If exactly one corner of $(A, B)$ is small,
then the same corner of $(A',B')$ must be the only small corner of $(A', B')$.
Hence $\Sigma \rightarrow \Sigma'$ is order-preserving.
\end{proof}

Next we spell out this uniqueness result in the case when the splittings happen to be compatible.
In this case, we allow some of the $\sigma_j$'s to be isomorphic to each other.

\begin{cor}[Uniqueness of compatibility trees]
\label{UNIQUENESSTREE}
Let $G$ be any group with a finite collection $\{\sigma_j | j \in J\}$ of splittings.
Suppose $\{\sigma_j | j \in J\}$ is compatible, and
let $T$ and $T'$ be compatibility trees.
Then there exists a $G$-equivariant isomorphism from $T$ to $T'$.
\end{cor}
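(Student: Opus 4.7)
The plan is to reduce the uniqueness of compatibility trees to the uniqueness of their directed edge sets as $G$-equivariant partially-ordered sets with involution, and then recover the trees using Dunwoody's theorem. For each $j \in J$ I choose a directed edge in the $j$-th orbit of $T$ and let $X_j$ be the corresponding standard $H_j$-almost invariant set arising from $\sigma_j$. Sending $g \cdot e_j \mapsto g X_j$ identifies the $j$-th edge orbit of $T$ with $\Sigma(X_j)$, so as a labeled $G$-set the directed edge set is $E(T) = \bigsqcup_{j \in J} \Sigma(X_j)$, with edge reversal sent to the complement involution and the tree's ``simple edge path'' partial order restricting to almost inclusion $\leq$ on each summand. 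An identical construction applied to $T'$ yields standard almost invariant sets $X'_j$ arising from $\sigma_j$ and an identification $E(T') = \bigsqcup_j \Sigma(X'_j)$.

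Next I would build a $G$-equivariant, label-, involution-, and order-preserving bijection $f : E(T) \to E(T')$. Within each summand, Proposition~\ref{EQUIVALENT} applied to $X_j$ and $X'_j$ (both of which arise from $\sigma_j$, so two corners of the pair must be small) provides, after possibly replacing $X'_j$ by $(X'_j)^*$ or a specific translate, a $G$-equivariant order-preserving bijection $f_j : \Sigma(X_j) \to \Sigma(X'_j)$; assembling the $f_j$'s yields $f$ on each summand, and the $G$-equivariance and within-summand order preservation come for free.

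The main obstacle is verifying that $f$ preserves the partial order \emph{across} labels: for $A \in \Sigma(X_j)$ and $B \in \Sigma(X_k)$ with $j \neq k$, I must show that $A \leq B$ in $E(T)$ if and only if $f_j(A) \leq f_k(B)$ in $E(T')$. To handle this I would collapse all orbits of $T$ other than $j$ and $k$, producing a compatibility tree $T_{jk}$ for the pair $\{\sigma_j, \sigma_k\}$; a parallel collapse of $T'$ gives $T'_{jk}$. The cross-label order between $A$ and $B$ is unchanged by the collapse, so the verification reduces to the $|J|=2$ case. That case serves as the base: with only two orbits, the nesting pattern between $\Sigma(X_j)$ and $\Sigma(X_k)$ as subsets of $G$ is dictated (up to finite corrections already absorbed by $f_j$ and $f_k$) by the mere existence of a common refinement, so it must match the corresponding pattern for $X'_j$ and $X'_k$. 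Once $f$ is in hand, Dunwoody's theorem applied to $(E(T), \leq, *)$ and $(E(T'), \leq, *)$ reconstructs each tree from its directed edge poset, and $f$ induces the desired $G$-equivariant simplicial isomorphism $T \to T'$.
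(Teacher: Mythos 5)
Your overall strategy --- identify the directed edge set of a compatibility tree with $\bigsqcup_j \Sigma(X_j)$, transport the data to a $G$-equivariant order-preserving poset isomorphism, and then apply Dunwoody's theorem --- is exactly the route the paper takes (via Corollary~\ref{UNIQUENESSPO} and Dunwoody). However, there are two genuine gaps.

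First, the cross-label order preservation step is asserted rather than proved. Saying the nesting pattern between $\Sigma(X_j)$ and $\Sigma(X_k)$ ``is dictated by the mere existence of a common refinement'' is not correct as stated: a common refinement forces almost-nesting (some corner of each pair is small), but it does not by itself determine \emph{which} corner is small, which is precisely what needs to match between $T$ and $T'$. The missing mechanism is that your $f_j$ must be arranged (as in the paper's proof, by choosing $e'_j$ so that $X'_j \overset{H_j-a}{=} X_j$; Lemma~\ref{BASICEQUIV} is what makes this choice possible) so that $f_j(A)$ is $Stab(A)$-almost equal to $A$ for every $A$. Then a corner of $(A,B)$ is small if and only if the corresponding corner of $(f_j(A), f_k(B))$ is small, and since $\sigma_j \not\cong \sigma_k$, Proposition~\ref{EQUIVALENT} guarantees at most one corner of each pair is small; combined with the tree giving one empty corner, smallness equals emptiness on both sides, so inclusion transports correctly. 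Your parenthetical ``so two corners of the pair must be small'' also has the logic backwards: two small corners are not automatic from $X_j$ and $X'_j$ both arising from $\sigma_j$ --- you must \emph{first} adjust $X'_j$ (translate/complement) to make them almost equal before Proposition~\ref{EQUIVALENT}'s hypothesis is met.

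Second, the corollary allows repeated splittings in $\{\sigma_j \mid j \in J\}$, but your argument implicitly assumes they are pairwise non-isomorphic (Proposition~\ref{EQUIVALENT}'s ``at most one small corner across labels'' fails when $\sigma_j \cong \sigma_k$, and then the almost-nesting pattern no longer pins down the nesting pattern, since two small corners can hide a nonempty one). The paper handles this by a separate reduction: collapse duplicate edge orbits, run the argument for distinct splittings, then subdivide. You need the analogous step.
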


\begin{proof}
First, we prove the result in the case where no two distinct $j$'s have isomorphic splittings.
Fix a vertex $v$ in $T$. For each $j$, pick a $\sigma_j$-edge $e_j$ in $T$,
and define a subset $X_j$ of $G$ by
\[
	X_j := \{g \in G | e \text{ points away from } gv\}.
\]
Fix a vertex $v'$ in $T'$. For each $j$, pick a $\sigma_j$-edge $e'_j$ in $T'$ whose stabilizer is the same as $Stab(e_j)$,
such that
\[
	X_j \overset{H_j-a}{=} \{g \in G | e'_j \text{ points away from } gv'\}
\]
(we can do this by Lemma~\ref{BASICEQUIV}).
Let $X'_j$ denote the set $\{g \in G | e'_j \text{ points away from } gv'\}$.
Apply Corollary~\ref{UNIQUENESSPO} to get a $G$-equivariant, order preserving isomorphism
from $\mycup \limits_{j \in J} \Sigma(X_j)$ to $\mycup \limits_{j \in J} \Sigma(X'_j)$.
Dunwoody's theorem (see Section~\ref{SUBdunwoody})
now gives a $G$-equivariant isomorphism from $T$ to $T'$.

Second, we prove the result in the case where $\{\sigma_j | j \in J\}$ possibly has duplicate splittings.
For each isomorphism class $\{\sigma_j | j \in I\}$ of splittings, discard all but one representative; call it $\sigma_I$.
Note that the edge in $T$ (or $T'$) corresponding to $X_I$
must be contained in an interval of $|I|$ edges, one for each $\sigma_j$ in the isomorphism class,
where the interior vertices of the interval each have valence two.
Collapse the edge orbits of $T$ and $T'$ corresponding to the discarded splittings.
To recover an isomorphism fro $T$ to $T'$, for each isomorphism class $I$,
subdivide each $\sigma_I$ edge in the collapsed $T$ and the collapsed $T'$ into an interval of $|I|$ edges.
\end{proof}

\section{Compatibility and Intersection Number Zero}
\label{IntersectionZero}

Take any finite collection of non-isomorphic splittings of $G$ satisfying sandwiching (see Definition~\ref{DEFNSANDWICHING}).
Here we show that if the splittings have pairwise intersection number zero,
then the splittings are compatible (this is Theorem~\ref{COMPATIBLE}).
This is a special case of very good position (see Section~\ref{VeryGoodPosition}),
when the intersection number of each pair of splittings is zero.

The sandwiching assumption is necessary;
see Section~\ref{NECESSARY}.
For more intuition about sandwiching, we begin by proving that proving that sandwiching is automatic
if none of the splittings is a trivially ascending HNN extension (see Page \pageref{TRIVASC}).
The key fact used is that if $X$ is a standard almost invariant set arising from a splitting that is not trivially ascending HNN,
then all four types of nesting occur between $X$ and its translates:

\begin{lem}
Let $\sigma$ be a splitting of $G$ over $H$, where $\sigma$ is not a trivially ascending HNN extension.
Let $X$ be a standard $H$-almost invariant set arising from $\sigma$.
Then, by varying $g$, all four of $gX^{(*)} \subset X^{(*)}$ occur.
\label{ALLINCLUSIONS}
\end{lem}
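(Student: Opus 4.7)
My approach would translate each of the four inclusions $gX^{(*)} \subset X^{(*)}$ into a statement about the half-spaces $H(ge)$ and $H(e)$ of the Bass-Serre tree $T$ for $\sigma$.  Since $X = \{h \in G : hv \in H(e)\}$, we have $gX = \{h \in G : hv \in H(ge)\}$, so $gX \subset X$ is equivalent to $H(ge) \cap Gv \subset H(e) \cap Gv$; $gX \subset X^*$ to $H(ge) \cap H(e) \cap Gv = \emptyset$; and analogously for the other two cases.  The two ``same-sign'' inclusions $gX \subset X$ and $gX^* \subset X^*$ hold trivially by taking $g = 1$, so the content of the lemma lies in the two ``opposing-sign'' cases.

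For these, I would look for $g$ in an endpoint-stabilizer of $e$ but outside $H = Stab(e)$.  If $g \in Stab(e_-) \setminus H$, then $ge$ shares the vertex $e_-$ with $e$ but is a different directed edge, so $H(e)$ and $H(ge)$ lie on different branches emanating from $e_-$; their complements are disjoint, so $H(e) \cup H(ge) = T$, which translates to $gX^* \subset X$.  Symmetrically, $g \in Stab(e_+) \setminus H$ produces two disjoint subtrees hanging off $e_+$, i.e. $H(ge) \cap H(e) = \emptyset$, giving $gX \subset X^*$.  The hypothesis that $\sigma$ is not a trivially ascending HNN extension is precisely what guarantees such $g$'s exist: in the AFP case $A *_H B$, a nontrivial splitting forces $[A:H] \geq 2$ and $[B:H] \geq 2$, so both stabilizer differences are nonempty; and in the HNN case $A*_H$, the condition rules out the only situation ($d_1 = d_2 = 1$) in which both endpoint-stabilizers coincide with $H$.

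The main obstacle will be the strictly ascending HNN case, where exactly one of the two endpoint-stabilizer inclusions is degenerate, so only one of the two symmetric stabilizer arguments directly applies.  I expect to resolve this by combining the ``good'' endpoint's stabilizer element with a translation-type element (such as the stable letter $t$ or a hyperbolic element whose axis passes through $e$): translating $e$ first to a well-chosen position and then applying the nontrivial stabilizer element there allows one to manufacture the remaining half-space configuration, using the fact that $T$ is not a line to guarantee the branching needed for the construction.
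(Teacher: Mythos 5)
Your stabilizer route is genuinely different from the paper's proof, which argues by a dichotomy on the shape of the Bass--Serre tree. When $T$ is a line, the ``not trivially ascending'' hypothesis forces $G \cong A *_H B$ with $|A:H| = |B:H| = 2$, and the needed translates of $e$ are read off the line directly. When $T$ has branching, the paper finds three translates of $e$ meeting at a single vertex in a tripod configuration and derives the required comparisons from the orientations of the three legs relative to the center. Vertex stabilizers never appear in the argument.

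The obstacle you flag in the strictly ascending HNN case is a genuine gap, and I do not believe your sketched resolution can close it. If $\sigma$ is a strictly ascending HNN extension (such as $BS(1,2)$), then the Bass--Serre tree has a $G$-invariant end $\omega$, namely the direction in which the vertex stabilizers grow. Every translate of $e$ therefore carries the same orientation relative to $\omega$, so at every tripod center the three incident translates of $e$ form the same pattern: always two legs toward the center and one away, or always the reverse, never both. Consequently only one of the two opposite-sign configurations ($ge \leq \overline{e}$ versus $g\overline{e} \leq e$) is ever realized; the other fails for every $g \in G$. Pre-translating $e$ by a hyperbolic element cannot manufacture the missing configuration, precisely because the invariant end constrains every translate of $e$ identically. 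This is not a defect peculiar to your route: the paper's own branching case exhibits only $g_3 e < g_4\overline{e}$, which together with its complement gives three of the four relations $gX^{(*)} \subset X^{(*)}$ but not $gX^* \subsetneq X$. The claimed strengthening of Scott--Swarup's Lemma 5.5 from ``not an ascending HNN extension'' to ``not a trivially ascending HNN extension'' appears to rest on exactly the point you identified, and if you want to close the gap you should first check directly, in a model case like $BS(1,2)$, whether the fourth inclusion type can occur at all.
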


This result is a strengthening of Lemma 5.5 of \cite{ScottSwarup2003}, which assumes that $\sigma$ is not any ascending HNN extension.

\begin{proof}
Since $X$ is a standard almost invariant set arising from $\sigma$, there is a $G$-tree $T$ with an edge $e$ and a vertex $w$,
and exactly one orbit of edges, such that
\[
	X = \{ g \in G | e \text{ points away from } gw\}.
\]
 It suffices to show that there exist translates of $e$ such that
 $g_1 e < g_2 e$ and $g_3 e < g_4 \overline{e}$.
 There are two cases:

\begin{enumerate}
	\item
	$T$ is a line, so since $\sigma$ is not trivially ascending HNN,
	$\sigma$ must have the form $G \cong A*_H B$, where $|A:H| = |B:H| = 2$.
	To get $g_1 e < g_2 e$, take two translates of $e$ separated by 1 edge.
	To get $g_3 e < g_4 \overline{e}$, take two adjacent translates of $e$.
	\item
	$T$ has branching, so we can find three distinct translates of $e$
	such that the geodesics between any two pair of them all meet at exactly one vertex,
	and that either two of the translates point toward the vertex and one points away, or vice-versa.
	See Figure~\ref{fig:branching}.
	To get $g_1 e \leq g_2 e$, take two of these translates of $e$
	where one is pointing toward the vertex and the other away.
	To get $g_3 e \leq g_4 \overline{e}$,
	take two translates pointing toward (or two pointing away from) the vertex.
\end{enumerate}
\end{proof}

\begin{figure}[!htbp]
	\centering
	\begin{minipage}[h]{0.48\linewidth}
		\includegraphics[width=65mm]{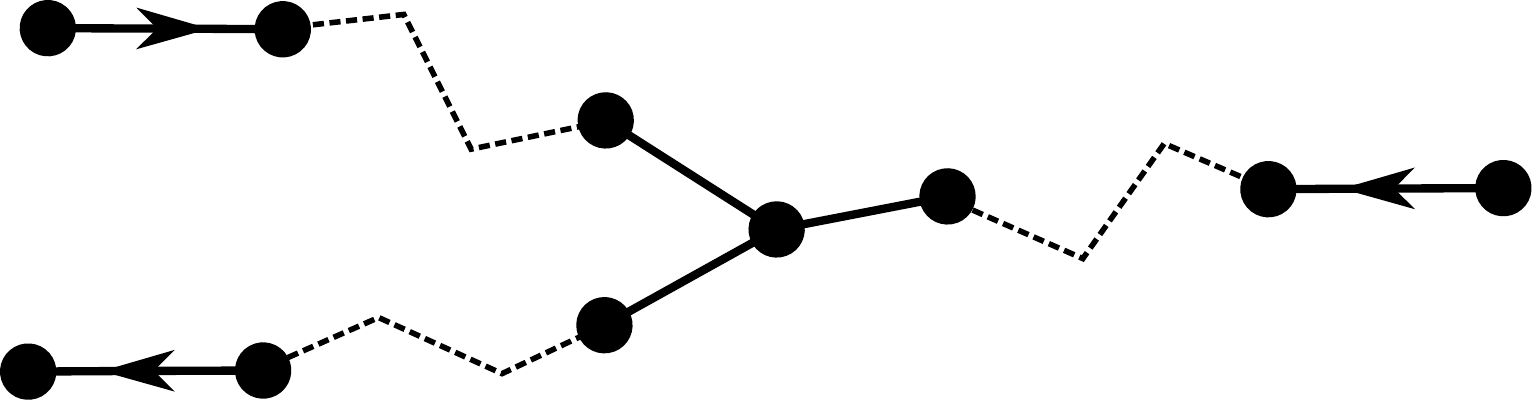}
	\end{minipage}
	\hfill
	\begin{minipage}[h]{0.48\linewidth}
		\caption[Branching]{
		\label{fig:branching}
		If a $G$-tree $T$ has branching and exactly one edge orbit,
		then for any edge, we can find three of its translates such that
		either two point toward each other and the other one points away, or vice versa.
		}
	\end{minipage}
\end{figure}

If $X$ and $Y$ are standard almost invariant sets arising from splittings of $G$,
then either $X$ crosses all translates of $Y$,
or all four types of almost nesting occur between $X$ and translates of $Y$:

\begin{lem}
\label{ALLCOMPARISONS}
Let $\sigma$ and $\tau$ be splittings of $G$ over $H$ and $K$, respectively,
where neither $\sigma$ nor $\tau$ is a trivially ascending HNN extension.
Let $X$ and $Y$ be standard almost invariant sets arising from $\sigma$ and $\tau$, respectively.
Suppose that there exists $g_0 \in G$ such that $X$ and $g_0 Y$ do not cross.
Then, by varying $g \in G$, all four of $X^{(*)} \leq g Y^{(*)}$ occur.
\end{lem}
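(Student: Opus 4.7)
The plan is to notice that the hypothesis already supplies one of the four comparisons $X^{(*)} \leq g_0 Y^{(*)}$, and then to boost this single comparison to all four by composing it, via $G$-equivariance and transitivity of $\leq$ (Corollary~\ref{PARTIALORDER}), with two carefully chosen applications of Lemma~\ref{ALLINCLUSIONS}.

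First, after possibly replacing $X$ by $X^*$ and/or $Y$ by $Y^*$ --- swaps under which both the hypothesis and the conclusion to be proved are invariant --- I would arrange that the non-crossing hypothesis takes the form $X \leq g_0 Y$. Next, since $\sigma$ is not a trivially ascending HNN extension, Lemma~\ref{ALLINCLUSIONS} applied to $X$ yields some $h \in G$ with $h X^* \subset X$; translating by $h^{-1}$ gives $X^* \subset h^{-1} X$, hence $X^* \leq h^{-1} X$. Analogously, since $\tau$ is not a trivially ascending HNN extension, Lemma~\ref{ALLINCLUSIONS} applied to $Y$ yields some $k \in G$ with $kY \subset Y^*$, and translating by $k^{-1}$ gives $Y \subset k^{-1} Y^*$, hence $Y \leq k^{-1} Y^*$.

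The relation $\leq$ is $G$-equivariant on $\Sigma(X) \cup \Sigma(Y)$, because $Stab(gA) = g\, Stab(A)\, g^{-1}$ so small corners are carried to small corners by the $G$-action. Thus I can left-translate to obtain $h^{-1} X \leq h^{-1} g_0 Y$ and $g_0 Y \leq g_0 k^{-1} Y^*$. Chaining through transitivity then gives
\[
	X \leq g_0 Y, \qquad
	X \leq g_0 k^{-1} Y^*, \qquad
	X^* \leq h^{-1} g_0 Y, \qquad
	X^* \leq h^{-1} g_0 k^{-1} Y^*,
\]
where the second comes from $X \leq g_0 Y \leq g_0 k^{-1} Y^*$, the third from $X^* \leq h^{-1} X \leq h^{-1} g_0 Y$, and the fourth from concatenating all three. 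This exhibits all four of $X^{(*)} \leq g Y^{(*)}$.

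The proof is largely bookkeeping; the only delicate point is choosing the correct option (out of the four provided by Lemma~\ref{ALLINCLUSIONS}) so that the resulting inclusions line up cleanly under transitivity. I do not foresee any deeper obstacle.
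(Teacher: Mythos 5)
Your proof is correct and follows essentially the same route as the paper's: reduce by symmetry to $X \leq g_0 Y$, extract nested translates from Lemma~\ref{ALLINCLUSIONS}, then chain via transitivity and $G$-equivariance of $\leq$. The only cosmetic difference is that you invoke Lemma~\ref{ALLINCLUSIONS} twice and obtain the fourth comparison by concatenating all the inclusions, whereas the paper applies it a third time (to get $g_0 Y \subset g_3 Y$) for the $X^* \leq gY$ case; both bookkeeping schemes work.
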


\begin{proof}
Without loss of generality (after possibly replacing $X$ by $X^*$ or $Y$ by $Y^*$),
$X \leq g_0Y$.
Obtain each of the four cases as follows:
\begin{enumerate}
	\item
	$X \leq g_0Y$ is already given.
	\item
	To show there exists $g$ with $X \leq g Y^*$,
	apply Lemma~\ref{ALLINCLUSIONS} to get $g_0 Y \subset g_1 Y^*$,
	\\
	so that $X \leq g_0 Y \subset g_1 Y^*$.
	\item
	To show there exists $g$ with $X^* \leq g Y^*$,
	apply Lemma~\ref{ALLINCLUSIONS} to get $g_2 X^* \subset X$.
	\\
	Now $g_2 X^* \subset X \leq g_0Y \subset g_1 Y^*$,
	so that $X^* \leq {g_2}^{-1} g_1 Y^*$
	\item
	To show there exists $g$ with $X^* \leq gY$,
	apply Lemma~\ref{ALLINCLUSIONS} to get $g_0Y \subset g_3Y$.
	\\
	Now $g_2 X^* \subset X \leq g_0Y \subset g_3Y$,
	so that $X^* \leq {g_2}^{-1} g_3 Y$.
\end{enumerate}
\end{proof}

\begin{rem}
If we assume that $X$ and $g_0 Y$ are nested (instead of almost nested),
then the same proof shows that all four inclusions $X^{(*)} \subset g Y^{(*)}$ occur.
\end{rem}

\begin{cor}
\label{TRIVHNN}
Let $G$ be any group with any collection $\{\sigma_j | j \in J\}$ of splittings,
where no $\sigma_j$ is a trivially ascending HNN extension.
Then $\{\sigma_j | j \in J\}$ satisfies sandwiching.
\end{cor}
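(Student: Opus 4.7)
The plan is to fix arbitrary indices $j, k \in J$ and standard almost invariant sets $X_j$ and $X_k$ arising from $\sigma_j$ and $\sigma_k$, and to show that $X_j$ is sandwiched by $X_k$. According to Definition~\ref{DEFNSANDWICHING}, there are two possible ways to conclude: either $X_j$ crosses every element of $\Sigma(X_k)$, in which case we are done immediately, or we must exhibit $A, B \in \Sigma(X_k)$ with $A \leq X_j \leq B$. So we may assume the first alternative fails, i.e.\ there exists some $Z \in \Sigma(X_k)$ that $X_j$ does not cross, and reduce to producing such $A$ and $B$.

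Write $Z = g_0 X_k^\epsilon$ for some $g_0 \in G$ and $\epsilon \in \{\text{trivial}, *\}$. Since $\Sigma(X_k^*) = \Sigma(X_k)$ and $X_k^*$ is itself a standard almost invariant set arising from $\sigma_k$, after possibly replacing $X_k$ by $X_k^*$ (which does not alter what it means to be sandwiched by $X_k$) we may assume $Z = g_0 X_k$. By hypothesis, neither $\sigma_j$ nor $\sigma_k$ is a trivially ascending HNN extension, so Lemma~\ref{ALLCOMPARISONS} applies and guarantees that as $g$ ranges over $G$, all four relations $X_j^{(*)} \leq g X_k^{(*)}$ occur.

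Picking a $g_1$ with $X_j \leq g_1 X_k$ produces the desired upper bound $B := g_1 X_k \in \Sigma(X_k)$. For the lower bound, pick $g_2$ with $X_j^* \leq g_2 X_k$; the defining condition of $\leq$ (Corollary~\ref{PARTIALORDER}) is symmetric under simultaneous complementation of both arguments, so this is equivalent to $g_2 X_k^* \leq X_j$, and we may set $A := g_2 X_k^* \in \Sigma(X_k)$. Then $A \leq X_j \leq B$ with $A, B \in \Sigma(X_k)$, showing $X_j$ is sandwiched by $X_k$. As $j, k$ were arbitrary, $\{\sigma_j \mid j \in J\}$ satisfies sandwiching.

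The only real content here is Lemma~\ref{ALLCOMPARISONS}; beyond that, the argument is essentially bookkeeping, so I do not anticipate a significant obstacle. The mildly subtle points are the initial reduction to $Z = g_0 X_k$ (using $\Sigma(X_k) = \Sigma(X_k^*)$ together with the fact that $X_k^*$ also arises from $\sigma_k$) and the passage from $X_j^* \leq g_2 X_k$ to $g_2 X_k^* \leq X_j$ via the complementation symmetry of $\leq$, both of which are formal consequences of the definitions.
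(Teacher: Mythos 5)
Your proof is correct and is exactly the argument the paper intends: the paper states Corollary~\ref{TRIVHNN} immediately after Lemma~\ref{ALLCOMPARISONS} with no written proof, implicitly leaving it as the direct consequence you spell out. One small simplification: the \textsc{wlog} replacing $X_k$ by $X_k^*$ is unnecessary, since $X_j$ fails to cross $g_0 X_k^*$ if and only if it fails to cross $g_0 X_k$ (the four corners of $(X_j, g_0 X_k)$ and of $(X_j, g_0 X_k^*)$ are the same sets), so the hypothesis of Lemma~\ref{ALLCOMPARISONS} is met either way; but including it does no harm.
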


Most of the results in the rest of the paper will require the sandwiching assumption.
The key reason we need sandwiching is to get interval finiteness:

\begin{prop}
\label{INTERVALS}
Let $\sigma_j$ be a splitting of $G$ over $H_j$,
and assume $\{\sigma_1, \ldots, \sigma_n\}$ satisfies sandwiching.
Let $X_j$ be an $H_j$-almost invariant set arising from $\sigma_j$.
Let $\Sigma = \{ gX_j, gX_j^* | g \in G, j = 1, \ldots, n\}$.
Then for all $A, B \in \Sigma$,
there are only finitely many $C \in \Sigma$ such that $A \leq C \leq B$.
\end{prop}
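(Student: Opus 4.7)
The plan is to break up the interval $[A,B]$ in $\Sigma$ according to which $\Sigma(X_l)$ each intermediate element belongs to, and to bound each of the $n$ pieces separately. The driving observation is that interval finiteness is free within a single $\Sigma(X_l)$: by Dunwoody's theorem (Section~\ref{SUBdunwoody}), $(\Sigma(X_l), \leq)$ is $G$-equivariantly isomorphic to the directed edge poset of the Bass-Serre tree $T_l$ for $\sigma_l$, and the order interval between two directed edges $e$ and $f$ in a tree consists of exactly the directed edges along the (necessarily finite) simple path from $e$ to $f$. So the whole proof reduces a mixed interval to a single-splitting interval via sandwiching.

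Concretely, fix $l \in \{1, \ldots, n\}$ and say $A \in \Sigma(X_i)$, $B \in \Sigma(X_j)$; I will show $\Sigma(X_l) \cap [A,B]$ is finite. Applying sandwiching to the pair $(X_i, X_l)$ leaves two alternatives. If $X_i$ crosses every element of $\Sigma(X_l)$, then so does the translate $A$, since crossing is $G$-invariant and invariant under $*$; hence no element of $\Sigma(X_l)$ is comparable to $A$ under $\leq$, and the piece is empty. Otherwise there exist $A', B' \in \Sigma(X_l)$ with $A' \leq X_i \leq B'$, and translating (and, if necessary, complementing) produces $\tilde A, \tilde B \in \Sigma(X_l)$ with $\tilde A \leq A \leq \tilde B$. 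The same argument applied to $(X_j, X_l)$ either makes the piece empty or produces $\tilde A', \tilde B' \in \Sigma(X_l)$ with $\tilde A' \leq B \leq \tilde B'$. Transitivity of $\leq$ (Corollary~\ref{PARTIALORDER}) then forces every $C \in \Sigma(X_l) \cap [A,B]$ to satisfy $\tilde A \leq C \leq \tilde B'$, so
\[
	\Sigma(X_l) \cap [A,B] \;\subset\; \Sigma(X_l) \cap [\tilde A, \tilde B'],
\]
which is finite by the single-splitting base case. Summing the finite contributions over the $n$ values of $l$ completes the argument.

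The only subtle point, which I expect to be the main obstacle, is verifying that sandwiching genuinely passes from the named representatives $X_i, X_j$ to arbitrary elements of $\Sigma$. This amounts to checking that $\Sigma(X_l)$ is closed under the $G$-action and under complementation and that $\leq$ is $G$-equivariant and reverses under $*$; granting these, the sandwich $A' \leq X_i \leq B'$ yields $gA' \leq gX_i \leq gB'$ and dually $(gB')^* \leq gX_i^* \leq (gA')^*$, with all four bounds remaining in $\Sigma(X_l)$. Likewise, if $X_i$ crosses every element of $\Sigma(X_l)$, then every translate of $X_i$ or $X_i^*$ does too. Once this bookkeeping is recorded, nothing beyond sandwiching and the Bass-Serre tree is needed.
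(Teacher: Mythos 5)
Your proof is correct and takes essentially the same route as the paper's: for each $l$, use sandwiching to trap any $C \in \Sigma(X_l) \cap [A,B]$ between two elements $\tilde A, \tilde B' \in \Sigma(X_l)$, reduce to interval finiteness within a single $\Sigma(X_l)$ (which is a finite path in the Bass-Serre tree for $\sigma_l$), and sum over the finitely many $l$. You are somewhat more careful than the paper in explicitly handling the ``crosses every element of $\Sigma(X_l)$'' alternative in the definition of sandwiching (which makes the $l$-piece empty) and in spelling out why sandwiching for the named representatives $X_i$ passes, via $G$-equivariance and $*$-duality of $\leq$, to arbitrary translates $A \in \Sigma(X_i)$.
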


\begin{proof}
Fix $A, B \in \Sigma$.
If $A \nleq B$, then there is no $C$ such that $A \leq C \leq B$;
so assume $A \leq B$.
Since $\mycup \limits_{j \in J} \Sigma(X_j)$ satisfies sandwiching,
for each $j \in J$ we can choose $A_j, B_j \in \Sigma(X_j)$ such that 
\[
	A_j \leq A \leq B \leq B_j.
\]
If $C \in \Sigma(X_j)$ and $A \leq C \leq B$, then $A_j \leq C \leq B_j$.
But for each $j$, there are only finitely many such $C$.
As we are only considering finitely many splittings,
there are only finitely many $C \in \Sigma$ satisfying $A \leq C \leq B$.
\end{proof}

\begin{thm}
\label{COMPATIBLE}
Let $\sigma_j$ be a splitting of $G$ over $H_j$
and assume $\{\sigma_1, \ldots, \sigma_n\}$ satisfies sandwiching.
Let $X_j$ be an $H_j$-almost invariant set arising from $\sigma_j$.
If $i(\sigma_j, \sigma_k) = 0$, for all $j$ and $k$,
then the splittings $\{ \sigma_1, \ldots, \sigma_n\}$ are compatible.
\end{thm}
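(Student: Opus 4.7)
The plan is to apply Dunwoody's theorem to the partially ordered set $(\Sigma, \leq)$, where $\Sigma := \bigcup_{j=1}^n \Sigma(X_j)$, producing a $G$-tree whose directed edge set is precisely $\Sigma$, and then to recognize that tree as a compatibility tree for $\{\sigma_1,\ldots,\sigma_n\}$. First I would reduce to the case in which no two $\sigma_j$'s are isomorphic: any duplicate isomorphism classes can be reinstated at the end by subdividing edge orbits into intervals, exactly as in the second half of the proof of Corollary~\ref{UNIQUENESSTREE}, with the identifications needed supplied by Proposition~\ref{EQUIVALENT}. So assume pairwise non-isomorphism, which puts $\Sigma$ in good position (Corollary~\ref{GOODPOSITION}) and makes $\leq$ a partial order (Corollary~\ref{PARTIALORDER}).

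Next I would verify the four hypotheses of Dunwoody's theorem for $(\Sigma,\leq)$ with involution $A \mapsto A^*$. Axiom~(1) is immediate, since the four corners of $(A,B)$ coincide as subsets of $G$ with those of $(B^*,A^*)$ and smallness does not depend on the labeling. Axiom~(2), interval finiteness, is precisely Proposition~\ref{INTERVALS}, and this is the step that consumes the sandwiching hypothesis. Axiom~(3), that at least one of the four relations $A^{(*)} \leq B^{(*)}$ holds for every $A,B \in \Sigma$, is where the assumption $i(\sigma_j,\sigma_k)=0$ is used: for $A \in \Sigma(X_j)$ and $B \in \Sigma(X_k)$, the intersection-number hypothesis combined with translation invariance and symmetry of crossing (Proposition~\ref{SYMMETRIC}) forces some corner of $(A,B)$ to be small, so good position then delivers one of the four relations. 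Axiom~(4), the incompatibility of simultaneous $A \leq B$ and $A \leq B^*$, is a short case analysis: both inequalities would force $A \cap B^*$ and $A \cap B$ to be simultaneously ``empty or the unique small corner'' of their respective pairs --- which is absurd unless $A = \emptyset$, contradicting the nontriviality of $A$.

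Dunwoody's theorem then yields a $G$-tree $T$ whose directed edge set is $\Sigma$. What remains is to identify $T$ as a compatibility tree. The decomposition $\Sigma = \bigcup_j \Sigma(X_j)$ into $G$-invariant pieces furnishes one edge orbit per splitting, so the required bijection with $\{1,\ldots,n\}$ is automatic. To see that collapsing every edge outside $\Sigma(X_j)$ yields a tree for $\sigma_j$, I would check that the restriction of $\leq$ to $\Sigma(X_j)$ agrees with the edge-path partial order on the Bass-Serre tree for $\sigma_j$ coming from the recipe of Section~\ref{SUBaifromsplitting}; Dunwoody's uniqueness then identifies the quotient tree with the $G$-tree for $\sigma_j$.

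The step I expect to require the most care is Axiom~(3). The intersection number $i(\sigma_j,\sigma_k)$ only counts double cosets $H_j g H_k$, so upgrading $i(\sigma_j,\sigma_k)=0$ to the statement that \emph{every} pair $(gX_j, hX_k)$ of translates fails to cross needs both Proposition~\ref{SYMMETRIC} and the translation invariance of crossing in its correct form. Axiom~(2) is the other substantive step, but most of the work there has been packaged into Proposition~\ref{INTERVALS} via the sandwiching assumption, so relatively little new work is needed.
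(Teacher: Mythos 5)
Your proposal follows the paper's proof essentially step for step: reduce to pairwise non-isomorphic splittings, verify Dunwoody's four axioms for $(\Sigma,\leq)$ using Corollary~\ref{GOODPOSITION}/\ref{PARTIALORDER}, Proposition~\ref{INTERVALS} (sandwiching), and the zero-intersection hypothesis, then read off the compatibility tree from the resulting $G$-tree, handling duplicate isomorphism classes by subdivision as in Corollary~\ref{UNIQUENESSTREE}. The extra detail you supply on Axioms~(3) and~(4) is correct and fills in steps the paper leaves implicit.
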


\begin{proof}
First we prove the theorem for the case when no $\sigma_j$ is isomorphic to any other.

By Corollary~\ref{GOODPOSITION}, $\leq$ is a partial order on
$\Sigma = \{ gX_j, gX_j^* | g \in G, j = 1, \ldots, n \}$.
We can see that the four conditions of Dunwoody's theorem (see Section \ref{SUBdunwoody}) are satisfied:
\begin{enumerate}
	\item
	For all $A, B \in \Sigma$, if $A \leq B$, then $B^* \leq A^*$.
	\item
	For all $A, B \in \Sigma$ with $A \leq B$,
	there are only finitely many $C \in \Sigma$ with $A \leq C \leq B$ (see Proposition\ref{INTERVALS}).
	\item
	For all $A, B \in \Sigma$, at least one of $A^{(*)} \leq B^{(*)}$
	(because $i(\sigma_j, \sigma_k) = 0$, for all $j$ and $k$).
	\item
	We cannot have simultaneously $A \leq B$ and $A \leq B^*$.
\end{enumerate}
Construct Dunwoody's tree,
$T_{\Sigma}$, with edge set $\Sigma$.
Each edge is a $\sigma_j$-edge for unique $j$.
For all $j$, we have a $G$-equivariant isomorphism
$T_\Sigma / (\text{all but } j \text{-edges collapsed}) \rightarrow T_{\sigma_j}$.
Hence $T_\Sigma$ is a compatibility tree for  $\{ \sigma_1, \ldots, \sigma_n\}$.

Second, we prove the theorem in the case when we possibly have duplicate splittings.
Discard all but one splitting from each isomorphism class.
Apply the above procedure.
Then subdivide the resulting tree, as in the proof of Corollary~\ref{UNIQUENESSTREE}.
\end{proof}

\section{$CAT(0)$ Cubical Complexes and Positive Intersection Number}
\label{VeryGoodPosition}

A {\bf cubical complex} $C$ is a CW-complex whose cells are standard Euclidean cubes of varying dimensions,
such that the intersection of any two cells is either empty or a common face of both.
$C$ is called a {\bf $CAT(0)$ cubical complex} if, in addition,
$C$ is simply connected, and the link of any vertex (i.e. $0$-cube) is a flag complex.
Another word for ``$CAT(0)$ cubical complex'' is ``cubing.''

In this section, we start with any finite collection of pairwise non-isomorphic splittings of any group $G$,
and construct a $CAT(0)$ cubical complex.
$G$ acts naturally on the complex, and each hyperplane orbit will correspond to one of the splittings.
Furthermore, hyperplanes cross precisely when their associated splittings cross.
Essentially, we are showing how to make Niblo-Sageev-Scott-Swarup's ``minimal cubing'' construction
from~\cite{NibloSageevScottSwarup2005} work
without requiring $G$ or the subgroups over which $G$ splits to be finitely generated.
Their ``minimal cubing'' construction, in turn, was a generalization
of Sageev's cubing construction in~\cite{Sageev1995}.
For applications of the cubing construction, see Theorem~\ref{VGP} and Section~\ref{AlgebraicRegularNeighborhoodsE}.

We will briefly review all the basic constructions.
For more details, see~\cite{NibloSageevScottSwarup2005} and Sections $2$ and $3$ of~\cite{Sageev1995}.

\subsection{Producing almost invariant sets from a $CAT(0)$ cubical complex}

In~\cite{ScottSwarup2003}, Scott and Swarup showed how to produce a an almost invariant set from a $G$-tree.
Niblo, Sageev, Scott and Swarup generalized the previous construction by
producing an almost invariant set from any $CAT(0)$ cubical complex on which $G$ acts.
We include the formal statement and proof of this result below.
Note that a tree is precisely a $1$-dimensional $CAT(0)$ cubical complex,
and hyperplanes in a tree are midpoints of edges.

\begin{lem}[Lemma 1.17 from \cite{NibloSageevScottSwarup2005}]     
\label{BASICEQUIV}
Let $G$ be any group acting on a cubing $C$.
Let $\mathcal{H}$ be a hyperplane in $C$ with stabilizer $H$,
and suppose that $H$ preserves each of $\mathcal{H^+}$ and $\mathcal{H^-}$.
Then for any vertex $v$, the set $X_v := \{g \in G | gv \in \mathcal{H}^+\}$ is $H$-almost invariant.
Moreover, for any vertices $v$ and $w$, we have $X_v$ is $H$-almost equal to $X_w$.
\end{lem}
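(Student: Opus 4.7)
The plan is to verify the two claims separately, reducing the second one to the first via a translation identity. I would start with the easy half: $H$ stabilizes $X_v$. If $h\in H$ and $g\in X_v$, then $gv\in\mathcal{H}^+$, and because $h$ preserves $\mathcal{H}^+$ we get $(hg)v=h(gv)\in\mathcal{H}^+$, so $hg\in X_v$. Applying the same argument to $h^{-1}$ gives the reverse inclusion.

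Next I would reduce the almost-invariance statement to the ``moreover'' clause by a direct calculation. For any $g\in G$, reindexing the set $X_v g$ by $y=xg$ yields
\[
  X_v g=\{y\in G\mid y(g^{-1}v)\in\mathcal{H}^+\}=X_{g^{-1}v}.
\]
Hence once we know $X_v\overset{H-a}{=}X_w$ for all vertices $v,w$, we immediately obtain $X_v g\overset{H-a}{=}X_v$ for all $g$, completing the proof that $X_v$ is $H$-almost invariant.

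The core of the argument is therefore the ``moreover'' clause. The key observation is that $g\in X_v\triangle X_w$ if and only if $gv$ and $gw$ lie in opposite half-spaces of $\mathcal{H}$, which is the same as saying $v$ and $w$ lie in opposite half-spaces of $g^{-1}\mathcal{H}$. Fix an edge-path $\gamma$ from $v$ to $w$ in $C$; by the standard fact that a hyperplane of a cubing separates the $0$-skeleton into two half-spaces, any hyperplane separating $v$ from $w$ must cross an edge of $\gamma$. Only finitely many hyperplanes cross $\gamma$ (at most its length), so the collection $\{g^{-1}\mathcal{H}\mid g\in X_v\triangle X_w\}$ is a finite set of hyperplanes, say $\{g_1^{-1}\mathcal{H},\dots,g_n^{-1}\mathcal{H}\}$.

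Finally, I would use the definition of $H$ as the stabilizer of $\mathcal{H}$ to count: $g^{-1}\mathcal{H}=g_i^{-1}\mathcal{H}$ is equivalent to $g_i g^{-1}\in H$, i.e.\ $g\in Hg_i$. Thus
\[
  X_v\triangle X_w\;\subset\;\bigcup_{i=1}^{n}Hg_i,
\]
so the symmetric difference is $H$-finite, as required. I do not expect a serious obstacle: the only nontrivial ingredient is the standard separation property of hyperplanes in a $CAT(0)$ cubical complex (with the attendant fact that each edge-path meets only finitely many hyperplanes), and everything else is bookkeeping with cosets.
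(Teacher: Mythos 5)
Your proof is correct and takes essentially the same route as the paper: both show that $g$ lies in the relevant symmetric difference exactly when $g^{-1}\mathcal{H}$ separates the two vertices in question, then invoke finiteness of separating hyperplanes and the coset-counting argument via $\operatorname{Stab}(\mathcal{H})=H$. Your observation that $X_v g = X_{g^{-1}v}$ reduces almost-invariance to the ``moreover'' clause is a mild streamlining of the paper's presentation, which runs the same hyperplane-separation argument twice, but the mathematical content is identical.
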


In \cite{NibloSageevScottSwarup2005}, the authors assume $G$ is finitely generated,
but their proof does not actually use that assumption.

\begin{proof}
	First, we show that $X$ is $H$-almost invariant.
	Clearly $h X_v = X_v$, for all $h \in H$.
	We also need $X_va$ is $H$-almost equal to $X_v$, for all $a \in G$. We have:
	\begin{align*}
		X_v	&= \{g \in G | gv \in \mathcal{H}^+\} \text{, so that} \\
		X_v a &= \{ ga \in G |gv \in \mathcal{H}^+\} \\
			&= \{ g \in G | g a^{-1}v \in \mathcal{H}^+\}.
	\end{align*}
	To show the symmetric difference of $X_v a$ and $X_v$ is $H$-finite, first we consider one half of the symmetric difference:
	\begin{align*}
		X_v - X_v a 	&= \{ g \in G | gv \in \mathcal{H}^+ \text{ and } g a^{-1} v \notin \mathcal{H}^+\} \\
					&= \{ g \in G | \mathcal{H} \text{ separates } gv \text{ from } g a^{-1} v\} \\
					&= \{g \in G | g^{-1} \mathcal{H} \text{ separates } v \text{ from } a^{-1} v\}.
	\end{align*}
	There are only finitely many (say, $m$) hyperplanes separating $v$ from $a^{-1} v$.
	If $g, g' \in G$ with $g^{-1} \mathcal{H} = {g'}^{-1} \mathcal{H}$,
	then $g' g^{-1} \in Stab(\mathcal{H}) = H$,
	and so $H g'$ and $H g^{-1}$ are actually the same coset.
	We conclude that $X_v - X_v a$ is contained in at most $m$ right cosets $Hg$.
	Similarly, $X_v a - X_v$ is $H$-finite.
	Hence $X_v$ is $H$-almost invariant.

	Second, let $v$ and $w$ be vertices of $C$.
	We need to show that $X_v$ is $H$-almost equal to $X_w$. We have:
	\begin{align*}
		g \in X_v - X_w 		&\iff		gv \in \mathcal{H}^+ \text{ and } gw \notin \mathcal{H}^+ \\
						&\iff		g^{-1} \mathcal{H} \text{ separates } v \text{ from } w.
	\end{align*}
	As in the argument above, the set of all such $g$ is $H$-finite.
	Similarly, $X_w - X_v$ is $H$-finite.
	Hence $X_v$ is $H$-almost equal to $X_w$.
\end{proof}

\subsection{Ultrafilters}

A {\bf partially ordered set with complementation}, or {\bf pocset},
is a partially ordered set $(\Sigma, \leq)$,
equipped with a free involution $*$ on $\Sigma$ behaving like complementation,
i.e $A \leq B$ implies $B^* \leq A^*$.
This terminology was introduced by Sageev and Roller.

\begin{defn}
Let $(\Sigma, \leq)$ be a pocset.
An {\bf ultrafilter} on $(\Sigma, \leq)$ is a subset $V$ of the power set of $\Sigma$ such that both of:
\begin{enumerate}
	\item
	For all $A \in \Sigma$, either $A \in V$ or $A^* \in V$ (but not both), and
	\item
	If $A \in V$ and $A \leq B$, 
	then $B \in \Sigma$.
\end{enumerate}
We say an ultrafilter $V$ satisfies the {\bf descending chain condition (DCC)} if every chain
$A_1 \geq A_2 \geq \ldots$ stabilizes after finitely many steps.
\end{defn}

Note that if $V$ is an ultrafilter on $\Sigma$,
then for any $g \in G$, the translate $gV := \{gA | A \in V \}$ is also an ultrafilter on $\Sigma$.
Also note that $V - \{A\} \cup \{A^*\}$ is an ultrafilter if, and only if, $A$ is a minimal element of $(V, \leq)$.

\subsection{Sageev's Cubing}

In \cite{Sageev1995}, Sageev constructed a cubing $C$ from a finite collection $\{X_j | j = 1, \ldots, n\}$
of \mbox{$H_j$-almost} invariant subsets of a group $G$,
using the partial order of inclusion.
We will now briefly review this construction.
Let $\Sigma := \mycup \limits_{j=1}^n \Sigma(X_j)$.
The vertices of Sageev's cubing are a subset of all ultrafilters on $(\Sigma, \subset)$.
Let $C'$ be the complex with a vertex for each ultrafilter on $(\Sigma, \subset)$,
and an edge connecting each pair of ultrafilters that differ by exactly one complementary pair $(A, A^*)$.
If $V$ is a vertex and $V \cup \{A^*\} - \{A\}$ is also a vertex,
we say the (directed) edge from $V$ to $V \cup \{A^*\} - \{A\}$ \mbox{{\bf exits $A$}.}
See below for the definition of ``basic vertex.''
Define the one-skeleton of $C$ to be the connected component of $C'$ containing all the basic vertices
(equivalently, define the vertices of $C$ to be all ultrafilters satisfying the descending chain condition;
see Lemmas~\ref{BASIC} and \ref{CONNECTED}).
Define higher skeleta of $C$ inductively: whenever you see the boundary of an $n$-cube, attach an $n$-cube.
This is Sageev's cubing.

\begin{defn}[basic vertex]
Let $G$ be a group with a finite collection $\{X_j | j = 1, \ldots, n\}$
of $H_j$-almost invariant subsets.
Let $\Sigma := \mycup \limits_{j=1}^n \Sigma(X_j)$.
Let $g$ be any element of $G$.
Define $V_g$ as follows:
\[
	V_g := \{ A \in \Sigma | g \in A\}.
\]
We call $V_g$ a {\bf basic vertex}. Some authors may refer to basic vertices as basic ultrafilters, principal vertices, or principal ultrafilters.
\end{defn}

\begin{lem}
\label{BASIC}
Let $G$ be a group with any finite collection $\{X_j | j = 1, \ldots, n\}$ of $H_j$-almost invariant subsets.
Let $\Sigma := \mycup \limits_{j=1}^n \Sigma(X_j)$
For each $g \in G$, the basic vertex $V_g$ is an ultrafilter on $(\Sigma, \subset)$ and satisfies DCC.
\end{lem}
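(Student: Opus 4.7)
The plan is to prove the two claims separately. The ultrafilter axioms should be immediate from the definition of $V_g$: condition (1) holds because $G = A \sqcup A^*$ forces $g$ to lie in exactly one of $A$ and $A^*$, so exactly one of $A, A^*$ belongs to $V_g$; condition (2) holds because $g \in A \subset B$ immediately gives $g \in B$, so $B \in V_g$. The DCC claim is where the real content lies, and I will reduce it to a single auxiliary finiteness statement.

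To establish DCC, I will first prove the auxiliary claim: for any $g, k \in G$ and any $j \in \{1, \ldots, n\}$, the set $\{A \in \Sigma(X_j) : g \in A, \, k \notin A\}$ is finite. The key idea is to convert the ``$g$ lies in a left translate $g'X_j$'' condition into a condition about right translates of $X_j$, where almost-invariance gives us control. Observe that $g \in g'X_j$ and $k \notin g'X_j$ if and only if $(g')^{-1} \in X_j g^{-1} \setminus X_j k^{-1}$. By almost-invariance, $X_j g^{-1} \overset{H_j-a}{=} X_j \overset{H_j-a}{=} X_j k^{-1}$, so the symmetric difference (and hence the set difference $X_j g^{-1} \setminus X_j k^{-1}$) is $H_j$-finite, i.e., contained in finitely many right $H_j$-cosets. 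Since $H_j$ stabilizes $X_j$, two elements of $G$ lying in the same right $H_j$-coset determine the same left translate $g'X_j$, so only finitely many distinct translates arise. The analogous argument, using that $X_j^*$ is also $H_j$-almost invariant, handles the $g'X_j^*$ translates, giving finitely many $A \in \Sigma(X_j)$ in total.

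Granted the auxiliary claim, DCC for $V_g$ follows by a standard pigeonhole argument. If, for contradiction, there were a strictly descending chain $A_1 \supsetneq A_2 \supsetneq \cdots$ in $V_g$, then since there are only $n$ possible values of $j$, some $\Sigma(X_j)$ would contain infinitely many $A_i$'s; pass to this subsequence. Pick any $k \in A_1 \setminus A_2$. Then for all $i \geq 2$ we have $A_i \subset A_2$ and hence $k \notin A_i$, so infinitely many $A_i$'s lie in $\{A \in \Sigma(X_j) : g \in A, \, k \notin A\}$, contradicting the auxiliary claim. The main obstacle is the auxiliary finiteness claim itself: the substitution $(g')^{-1} \mapsto y$ that converts the left-translation statement into a right-translation statement accessible to almost-invariance is the crucial move, and it is what lets us avoid the Cayley-graph coboundary arguments of \cite{NibloSageevScottSwarup2005}, which would require finite generation of $G$.
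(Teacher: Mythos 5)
Your proposal is correct and takes essentially the same approach as the paper: both reduce the DCC claim to the finiteness of the set of translates in $\Sigma(X_j)$ separating two fixed group elements, and both prove that finiteness by passing from a left-translate condition to a right-translate condition where $H_j$-almost-invariance applies. The only cosmetic difference is that the paper uses the symmetric difference $X_j g^{-1} \triangle X_j g_0^{-1}$ while you use the one-sided difference $X_j g^{-1} \setminus X_j k^{-1}$; since the chain elements all contain $g$, the one-sided version suffices and is marginally cleaner.
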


\begin{proof}
Fix $g \in G$.
We first show that $V_g$ satisfies conditions 1 and 2 of the definition of ``ultrafilter.''
\begin{enumerate}
	\item
	Let $A, B \in \Sigma$ be arbitrary.
	Either $g \in A$ or $g \in A^*$, so either $A \in V_g$ or $A^* \in V_g$ (but not both).
	\item
	If $A \in V_g$ and $A \subset B$,
	then $g \in B$, so $B \in V_g$.
\end{enumerate}
Hence $V_g$ is an ultrafilter on $(\Sigma, \subset)$.

To show $V_g$ satisfies DCC, take a descending chain $B_1 \supset B_2 \supset \ldots$ in $V_g$.
If the $B_k$ are not all equal to begin with, then without loss of generality (after passing to a subsequence),
$B_1 - B_2$ is nonempty, and
there exists some fixed $j$ such that
$B_k \in \Sigma(X_j)$, for all $k$.
Fix $g_0 \in B_1 - B_2$.
We claim (as proved in Lemma 3.4~of~\cite{Sageev1995} ) that
\[
	\mathcal{B} := \{B \in \Sigma(X_j) |
	g \in B \text{ and } g_0 \notin B, \text{ or } g \notin B \text{ and } g_0 \in B\}
\]
is finite.
Assuming the claim, the chain must stabilize, as each element of the chain (except for $B_1$) is an element of $\mathcal{B}$.

To prove the claim, first note that since $X_j$ is $H_j$-almost invariant, we have
\[
	X_j g^{-1} \overset{H_j-a}{=} X_j g_0^{-1}.
\]
Pick $g^j_1, \ldots, g^j_{r_j}$ such that 
the symmetric difference of $X_j g^{-1}$ and $X_j g_0^{-1}$
is contained in $\coprod \limits_{k = 1}^{r_j} H_j (g^j_k)^{-1}$.
We have:
\begin{align*}
	g' X_j^{(*)} \in \mathcal{B}
	& \iff g' X_j^{(*)} \text{ separates } g \text{ and } g_0\\
	& \iff X_j^{(*)} \text{ separates } (g')^{-1}g \text{ and } (g')^{-1} g_0\\
	& \iff (g')^{-1} \text{ is in the symmetric difference of } X_j g^{-1} \text{ and } X_j g_0^{-1} \\
	& \iff (g')^{-1} \in \coprod \limits_{k = 1}^{r_j} H_j (g^j_k)^{-1}\\
	& \iff g' \in \coprod \limits_{k = 1}^{r_j} g^j_k H_j
\end{align*}
As $H_j$ stabilizes $X_j$, and as there are only finitely many $j$,
it follows that $\mathcal{B}$ is finite.
This completes the proof that $V_g$ satisfies DCC.
\end{proof}

\subsection{Minimal Cubings}

In \cite{NibloSageevScottSwarup2005}, Niblo, Sageev, Scott, and Swarup constructed another cubing $L$
from a finite collection $\{X_j | j = 1, \ldots, n\}$ of $H_j$-almost invariant subsets of a group $G$.
As before, let $\Sigma := \mycup \limits_{j=1}^n \Sigma(X_j)$.
Niblo-Sageev-Scott-Swarup assume that the subsets are already in good position, and use the partial order of almost inclusion.
Their construction requires $G$ and all the $H_j$'s to be finitely generated.
Using the finite generation of $G$ and the $H_j$'s, they constructed analogues of basic vertices,
and defined the vertex set of their cubing $L$ to be everything connected to the basic vertex analogues.
Here, we construct the cubing in the case when the $X_j$'s come from splittings satisfying sandwiching,
and do not require any finite generation assumptions.
As $G$ and the $H_j$'s are not necessarily finitely generated,
we need an alternate way to describe the vertices.
For simplicity, we define the vertices of $L$ to be all ultrafilters on $(\Sigma, \leq)$ satisfying DCC.
This will in fact give us the same vertex set as the cubing in \cite{NibloSageevScottSwarup2005},
in the case when their hypotheses are satisfied; see Lemma~\ref{CONNECTED} for justification.
Later we will need to show that our cubing is always nonempty (see Theorem~\ref{ULTRAFILTER}).

The following lemma proves that the set of vertices connected to a given vertex satisfying DCC
is precisely the set of all vertices satisfying DCC.

\begin{lem}
\label{CONNECTED}
\begin{enumerate}
Let $(\Sigma, \leq)$ be any pocset.
	\item
	Any two vertices (i.e. ultrafilters on $(\Sigma, \leq)$) satisfying DCC
	can be connected via a finite edge path.
	\item
	If a vertex is connected to some vertex satisfying DCC,
	then the vertex satisfies DCC.
\end{enumerate}
\end{lem}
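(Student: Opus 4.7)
The plan is to handle the two parts separately. For part (2), suppose $V$ is joined to a DCC ultrafilter $W$ by an edge path $V = V_0, V_1, \ldots, V_n = W$. Each edge flips exactly one complementary pair, so $V$ and $W$ disagree on at most $n$ pairs; in particular $V \setminus W$ is finite. Given any descending chain $C_1 \geq C_2 \geq \cdots$ in $V$, only finitely many $C_i$ can lie in $V \setminus W$, so some tail lies entirely in $V \cap W \subseteq W$, and DCC of $W$ forces that tail, and hence the whole chain, to stabilize.

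For part (1), I will run a greedy flipping procedure. Let $V$ and $W$ be DCC ultrafilters and set $D := V \setminus W$. First I would show that any minimal element $A$ of $D$ is also minimal in $V$: if $B \in V$ with $B < A$, then $B \in W$ would force $A \in W$ by upward closure, contradicting $A \in D$; hence $B \in D$, contradicting minimality of $A$ in $D$. Since $D$ inherits DCC from $V$, whenever $D \neq \emptyset$ I pick such a minimal $A$ and form $V_1 := (V \setminus \{A\}) \cup \{A^*\}$. Minimality of $A$ in $V$, together with the standard pocset fact $A^* \not\leq A$ (which follows from antisymmetry and freeness of the involution), shows $V_1$ is an ultrafilter. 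A short chain analysis then confirms $V_1$ still satisfies DCC: a descending chain in $V_1$ either avoids $A^*$ (and so lies in $V$), or passes through $A^*$, in which case any strictly smaller element $C \neq A^*$ would have $C \in V$ and $C^* \geq A$ by upward closure, forcing both $C$ and $C^*$ into $V$, which is impossible; so the chain becomes constant at $A^*$.

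Iterating gives a sequence of DCC ultrafilters $V_0, V_1, \ldots$ with $V_{i+1} \setminus W = (V_i \setminus W) \setminus \{A_i\}$, so the procedure reaches $W$ in $|D|$ steps. The main obstacle is therefore to prove $|D| < \infty$. Here I expect to use DCC on both sides: DCC of $V$ immediately gives DCC on $D$, and applying DCC of $W$ to $D^* = W \setminus V = \{A^* : A \in D\}$ then transporting through complementation yields ACC on $D$. Thus $D$ satisfies both ACC and DCC. In a completely general pocset this is not enough (an infinite antichain satisfies both), so the argument must exploit further structure available in this paper's setting, namely the interval finiteness of the pocset built from a finite family of splittings satisfying sandwiching (Proposition~\ref{INTERVALS}). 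Combining ACC and DCC with interval finiteness should pin down $D$ as finite, completing the argument; this finiteness step is the one I expect to require the most care.
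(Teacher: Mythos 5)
Your part~(2) matches the paper's argument and is fine.

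For part~(1), you correctly diagnose a subtlety that the paper's statement of the lemma glosses over: it is phrased for ``any pocset,'' but it fails in that generality (two DCC ultrafilters on a pocset with an infinite antichain of pairwise crossing elements can disagree on all of them), so additional structure is needed. Your greedy-flip machinery --- picking a minimal element $A$ of $D = V \setminus W$, showing it is minimal in $V$, flipping it, and checking that DCC is preserved --- is correct and is actually more explicit than the paper, which only argues that the symmetric difference is finite and leaves the edge-path construction implicit. Your minimality and DCC-preservation checks are sound.

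The gap is in the finiteness-of-$D$ step, and the specific tool you propose does not close it. Interval finiteness is perfectly compatible with an infinite antichain (every interval is a singleton, so Proposition~\ref{INTERVALS} says nothing), and as you observed yourself, ACC $+$ DCC also fails to rule out infinite antichains. The structural fact the paper actually exploits is different and stronger: $\Sigma = \mycup_{j=1}^n \Sigma(X_j)$ is a \emph{finite union of nested families}, since translates of a single $X_j$ and its complement are pairwise nested. Given an infinite $D$, some infinite subset $\{A_k\}$ lies in a single $\Sigma(X_j)$ and is therefore nested; because $A_k, A_l \in V$ and $A_k^*, A_l^* \in W$ with $V, W$ ultrafilters, neither $A_k \leq A_l^*$ nor $A_l \leq A_k^*$ can hold, so $\{A_k\}$ is totally ordered; and an infinite totally ordered set cannot satisfy both ACC and DCC (well-order it using DCC; infinite order type gives an ascending $\omega$-chain). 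That last chain of implications, not interval finiteness, is what you need to plug in.
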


\begin{proof}
\begin{enumerate}
\item
	Assume, for contradiction, that $V$ and $W$ satisfy DCC
	but differ on infinitely many (distinct) elements,
	say $A_1, A_2, \ldots \in V$ and $A_1^*, A_2^*, \ldots \in W$.
	As every element of $\Sigma$ comes from one of finitely many splittings,
	after passing to a subsequence,
	all the $A_k$'s come from a single splitting, and hence are nested.
	Note that as both $V$ and $W$ are ultrafilters,
	we cannot have $A_k \leq A_l^*$.
	Thus, after reordering, we either get an ascending chain in the $A_k$'s
	and descending chain in the $A_k^*$'s, or vice-versa.
	Hence the chain stabilizes, a contradiction to the $A_k$ being distinct.
	\item
	Let $V$ be any vertex satisfying DCC, and $W$ any vertex connected to $V$.
	Then $V$ and $W$ differ by only finitely many complementary pairs $(A, A^*)$.
	Any descending chain in $W$ must have all but finitely many of its elements in $V$,
	so must stabilize (since $V$ satisfies DCC).
\end{enumerate}
\end{proof}

As noted in \cite{NibloSageevScottSwarup2005},
every ultrafilter on $(\Sigma, \leq)$ is also an ultrafilter on $(\Sigma, \subset)$,
and any ultrafilter satisfying DCC with respect to $\leq$ also satisfies DCC with respect to $\subset$,
so that every vertex in $L$ is canonically a vertex in $C$.
We will see that the embedding $L^0 \hookrightarrow C^0$ naturally extends to an embedding $L \hookrightarrow C$.
However, in general, $C$ contains many vertices that are not in $L$.
For example, either all basic vertices are in $L$,
or $L$ contains no basic vertices:

\begin{lem}
Let $G$ be any group with a finite collection $\{X_j | j = 1, \ldots, n\}$
of $H_j$-almost invariant subsets.
Let $\Sigma := \mycup \limits_{j=1}^n \Sigma (X_j)$.
Suppose almost inclusion $\leq$ defines a partial order on $\Sigma$.
Then $V_g$ is an ultrafilter on $(\Sigma, \leq)$ for all $g \in G$
if, and only if, $V_g$ is an ultrafilter on $(\Sigma, \leq)$ for some $g \in G$.
\end{lem}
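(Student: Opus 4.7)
The forward direction is trivial, so the work is in showing that if $V_{g_0}$ happens to be an ultrafilter on $(\Sigma, \leq)$ for some particular $g_0 \in G$, then $V_g$ is an ultrafilter on $(\Sigma, \leq)$ for every $g \in G$. The plan is to exploit the $G$-action: since $\leq$ is defined in terms of corners being empty or small, and both properties are $G$-invariant, the $G$-action on $\Sigma$ is order-preserving, so $G$ permutes the set of ultrafilters on $(\Sigma, \leq)$.

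First I would verify the translation formula $V_{gh} = g V_h$ directly from the definition $V_g = \{A \in \Sigma \mid g \in A\}$: one computes
\[
gV_h = \{gA \mid h \in A\} = \{B \mid h \in g^{-1}B\} = \{B \mid gh \in B\} = V_{gh}.
\]
Next I would check that if $V$ is any ultrafilter on $(\Sigma, \leq)$ and $g \in G$, then $gV$ is also an ultrafilter on $(\Sigma, \leq)$. The complementation axiom is immediate, and the upward-closed condition follows from order-preservation: if $A \in gV$ and $A \leq B$, then $g^{-1}A \in V$ and $g^{-1}A \leq g^{-1}B$, so $g^{-1}B \in V$ and hence $B \in gV$.

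Combining these two observations, given any $g \in G$, set $h := gg_0^{-1}$; then $V_g = V_{h g_0} = hV_{g_0}$, which is an ultrafilter on $(\Sigma, \leq)$ by the previous step applied to $V_{g_0}$. This completes the nontrivial direction.

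There is no real obstacle here; the only point requiring a moment of care is confirming that $\leq$ (as defined in Section~\ref{SUBpo}) is preserved by the $G$-action, which is clear because the notions of ``empty corner'' and ``small corner'' ($Stab(A)$-finite) are both manifestly $G$-equivariant under left translation.
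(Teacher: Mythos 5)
Your proof is correct and follows essentially the same route as the paper's: both arguments rest on the $G$-equivariance of $\leq$ together with the translation identity $V_{gh} = gV_h$. The paper phrases the step contrapositively (a pair $(A,B)$ witnessing failure of the ultrafilter condition for $V_g$ translates to a witness $(g'A, g'B)$ for $V_{g'g}$), whereas you phrase it directly (the $G$-action permutes ultrafilters); this is a cosmetic difference, and you are right to flag the $G$-equivariance of ``empty corner'' and ``small corner'' as the one point requiring verification.
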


\begin{proof}
We will use the arguments from the proof of Lemma~\ref{BASIC} show that $V_g$ satisfies DCC with respect to the partial order $\leq$, as follows
(regardless of whether or not $V_g$ actually is an ultrafilter on $(\Sigma, \leq)$).
Suppose we have a descending chain $A_1 \geq A_2 \geq \ldots$ in a basic vertex $V_g$.
Since the $A_i$ come from only finitely many splittings,
after passing to a subchain, we have in fact $A_1 \supset A_2 \supset \ldots$,
which must stabilize, since it is a descending chain in $(V_g, \subset)$ and $(V_g, \subset)$ satisfies DCC.

Now, suppose there exists $g \in G$ such that $V_g$ is not an ultrafilter on $(\Sigma, \leq)$.
Clearly for all $A \in \Sigma$, either $A$ or $A^*$ is in $V_g$.
Hence there must exist $A, B \in \Sigma$ with $A \in V_g$, $A \leq B$, and $B \notin V_g$.
The pair $(A, B)$ prevents $V_g$ from being an ultrafilter.
Now, for any $g' \in G$, the pair $(g' A, g' B)$ prevents $V_{g' g}$ from being an ultrafilter.
\end{proof}

In what follows, we prove that $L$ is nonempty (assuming sandwiching, but not assuming any finite generation).
To create an ultrafilter on $(\Sigma, \leq)$,
we will start with a basic ultrafilter on $\Sigma(X_1) \subset \Sigma$, then extend.
In general, suppose $\Sigma_0 \subset \Sigma$ and $V_0$ is an ultrafilter on $\Sigma_0$.
If we hope to extend $V_0$ to an ultrafilter on all of $\Sigma$,
we must add to $V_0$ all elements $B$ of $\Sigma$ for which $A \leq B$ for some $A \in V_0$.
We call this process taking the closure of $V_0$. More formally:

\begin{defn}
Let $(\Sigma, \leq)$ be any partially ordered set with complementation,
and let $V_0$ be an ultrafilter on some subset $\Sigma_0$ of $\Sigma$.
The {\bf closure of $V_0$} is:
\[
	\overline{V_0} := V_0 \cup \{B \in \Sigma - \Sigma_0 |
	\text{there exists } A \in V_0 \text{ such that } A \leq B\}.
\]
\end{defn}

\begin{lem}
\label{CLOSURE}
Let $(\Sigma, \leq)$ be any partially ordered set with complementation,
and let $V_0$ be an ultrafilter on some subset $\Sigma_0$ of $\Sigma$.
Let $\overline{\Sigma_0}$ denote the set $\{A \in \Sigma | A \in \overline{V_0} \text{ or } A^* \in \overline{V_0}\}$.
Then $\overline{V_0}$, the closure of $V_0$ in $\Sigma$, is an ultrafilter on $(\overline{\Sigma_0}, \leq)$.
\end{lem}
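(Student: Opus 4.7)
The plan is to verify, directly from the definitions, that $\overline{V_0}$ satisfies the two axioms of an ultrafilter on $(\overline{\Sigma_0}, \leq)$: (i) for every $A \in \overline{\Sigma_0}$, exactly one of $A, A^*$ lies in $\overline{V_0}$; and (ii) if $A \in \overline{V_0}$ and $A \leq B$ with $B \in \overline{\Sigma_0}$, then $B \in \overline{V_0}$. Note that since $V_0$ is itself an ultrafilter on $\Sigma_0$, the set $\Sigma_0$ must be closed under $*$, a fact I will use freely.

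For axiom (ii), I would simply chase the definition of closure. If $A \in V_0$ and $A \leq B$, then either $B \in \Sigma_0$, in which case $B \in V_0$ by the ultrafilter property of $V_0$, or $B \notin \Sigma_0$, in which case $B$ lies in $\overline{V_0}$ by construction. If instead $A \in \overline{V_0} - V_0$, pick $C \in V_0$ with $C \leq A$; by transitivity $C \leq B$, and the same dichotomy on whether $B \in \Sigma_0$ gives $B \in \overline{V_0}$.

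The main obstacle is the ``at most one'' half of axiom (i), and this is where the complementation property $A \leq B \Rightarrow B^* \leq A^*$ of the pocset plays the key role. I would assume for contradiction that $A$ and $A^*$ both lie in $\overline{V_0}$ and split into three cases. If both lie in $V_0$, this contradicts $V_0$ being an ultrafilter on $\Sigma_0$. If $A \in V_0$ while $A^* \in \overline{V_0} - V_0$, choose $C \in V_0$ with $C \leq A^*$; then $A \leq C^*$, and since $C^* \in \Sigma_0$ and $A \in V_0$, the ultrafilter property of $V_0$ forces $C^* \in V_0$, contradicting $C \in V_0$. If both $A, A^* \in \overline{V_0} - V_0$, pick $C, D \in V_0$ with $C \leq A$ and $D \leq A^*$; then $C \leq A \leq D^*$, and $C \in V_0$ together with $D^* \in \Sigma_0$ yields $D^* \in V_0$, again contradicting $D \in V_0$.

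The ``at least one'' half of axiom (i) is immediate from the very definition of $\overline{\Sigma_0}$, so once the three cases above are handled, both axioms are verified and the proof is complete. The only subtlety worth flagging is the implicit use that $\Sigma_0$ is $*$-invariant, which must hold for $V_0$ to be an ultrafilter on $\Sigma_0$ in the first place.
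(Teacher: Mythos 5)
Your proof is correct and follows essentially the same line as the paper's: direct verification of the two ultrafilter axioms by chasing the definition of closure, using transitivity and contraposition through $*$. One small redundancy: your Case~2 ($A \in V_0$, $A^* \in \overline{V_0} - V_0$) is vacuous, since $\Sigma_0$ is $*$-closed and $\overline{V_0} - V_0 \subseteq \Sigma - \Sigma_0$ — the paper uses precisely this observation to reduce the case analysis to your Cases~1 and~3 — but your argument handles the vacuous case correctly, so this is harmless.
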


\begin{proof}
To prove that $\overline{V_0}$ is an ultrafilter on $(\overline{\Sigma_0}, \leq)$,
we must show that conditions 1 and 2 of the definition of ``ultrafilter'' are satisfied.
\begin{enumerate}
	\item
	Clearly for all $B \in \overline{\Sigma_0}$, at least one of $B, B^*$ is in $\overline{V_0}$.
	We must show that if $B \in \overline{V_0}$, then $B^* \notin \overline{V_0}$.
	Suppose, for contradiction, that $B \in \overline{V_0}$ and $B^* \in \overline{V_0}$.
	By construction, either $B, B^* \in \Sigma_0$ or $B, B^* \in \overline{\Sigma} - \Sigma_0$, so that
	either $B, B^* \in V_0$ or $B, B^* \in \overline{V_0} -V_0$.
	As $V_0$ is an ultrafilter, it is impossible to have both $B$ and $B^*$ in $V_0$.
	Hence we must have $B, B^* \in \overline{V_0} - V_0$.
	By the definition of ``closure,'' there exist $A, A' \in V_0$ such that $A \leq B$ and $A' \leq B^*$
	(i.e. $B \leq {A'}^*$).
	Transitivity of $\leq$ now implies $A \leq {A'}^*$.
	Since $A \in V_0$, this implies ${A'}^* \in V_0$, a contradiction to $V_0$ being an ultrafilter.
	This completes the proof that we cannot have simultaneously $B \in \overline{V_0}$ and $B^* \in \overline{V_0}$.
	\item
	Assume $B, C \in \overline{\Sigma_0}$ with $B \in \overline{V_0}$ and $B \leq C$.
	We must show that $C \in \overline{V_0}$.
	By the construction of $\overline{V_0}$, there exists $A \in V_0$ with $A \leq B$.
	By transitivity of $\leq$, we have $A \leq C$ (equivalently, $C^* \leq A^*$).
	We break up the rest of the proof into two cases,
	depending on whether $C  \in \Sigma_0$.
	\begin{itemize}
		\item
		If $C \in \Sigma_0$, since $V_0$ is an ultrafilter on $\Sigma_0$,
		either $C$ or $C^*$ must be in $V_0$.
		If $C^* \in V_0$, then $C^* \leq A^*$ implies $A^* \in V_0$, which is impossible since $A \in V_0$.
		Hence we must have $C \in V_0$.
		Since $V_0$ is a subset of $\overline{V_0}$, this implies $C \in \overline{V_0}$.
		\item
		If $C \in \overline{\Sigma_0} - \Sigma_0$,
		then $A \leq C$ implies $C \in \overline{V_0}$.	
	\end{itemize}
	Hence we must have $C \in \overline{V_0}$.
\end{enumerate}
This completes the proof that the closure of an ultrafilter is an ultrafilter.
\end{proof}

The following lengthy theorem proves that the cubing $L$ is non-empty.

\begin{thm}
\label{ULTRAFILTER}
Let $G$ be any group with a finite collection $\{\sigma_j | j = 1, \ldots, n\}$ of pairwise non-isomorphic splittings.
Suppose $\{\sigma_j | j = 1, \ldots, n\}$ satisfies sandwiching.
For each $j$, let $X_j$ be an $H_j$-almost invariant set arising from $\sigma_j$.
Let $\Sigma := \mycup \limits_{j =1}^n \Sigma(X_j)$.
Then there exists an ultrafilter on $(\Sigma, \leq)$ satisfying DCC.
\end{thm}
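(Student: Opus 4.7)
Fix $g\in G$, enumerate the splittings as $\sigma_1,\ldots,\sigma_n$, and write $\Sigma_i:=\Sigma(X_i)$ and $\Sigma_{\leq k}:=\Sigma_1\cup\cdots\cup\Sigma_k$. The plan is to build $V$ inductively over $k$. For $k=1$, I would let $V^{(1)}:=\{A\in\Sigma_1 : g\in A\}$; since every pair of elements of $\Sigma_1$ corresponds to a pair of directed edges in the single Bass--Serre tree $T_1$ and is therefore nested, the relation $\leq$ on $\Sigma_1$ coincides with $\subset$ between distinct non-complementary elements, so $V^{(1)}$ is an ultrafilter on $(\Sigma_1,\leq)$ just as in the usual basic-vertex argument. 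For the inductive step I would extend $V^{(k-1)}$ to $\Sigma_{\leq k}$ in two moves: first form the closure $\overline{V^{(k-1)}}$ inside $\Sigma_{\leq k}$ using Lemma~\ref{CLOSURE}, which gives an ultrafilter on the sub-pocset of elements comparable under $\leq$ to something in $V^{(k-1)}$; then, for any $B\in\Sigma_k$ with neither $B$ nor $B^*$ already in this closure, both $B$ and $B^*$ are consistent additions. Indeed, such a $B$ must cross every element of $V^{(k-1)}$ (else $B$ or $B^*$ would be forced in by closure), so adding $B$ and re-closing cannot force $B^*$ in: that would require some $A\in V^{(k-1)}$ with $A\leq B^*$, contradicting crossing. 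Zorn's lemma applied to the poset of consistent partial extensions produces $V^{(k)}$ on all of $\Sigma_{\leq k}$, and after $n$ steps I obtain $V:=V^{(n)}$ on $\Sigma$.

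\textbf{Phase 2 (verifying DCC).}
Suppose, for contradiction, that $A_1>A_2>A_3>\cdots$ is an infinite strictly descending chain in $V$ under $\leq$. Since there are finitely many splittings, after passing to a subchain I may assume all $A_i\in\Sigma_j$ for a single $j$. As in Phase 1, $\leq$ between distinct elements of $\Sigma_j$ forces proper inclusion, so $A_1\supsetneq A_2\supsetneq\cdots$ strictly. The strategy is to exhibit a single $C\in\Sigma$ with $C\leq A_i$ for all sufficiently large $i$, and then invoke interval finiteness (Proposition~\ref{INTERVALS}) on $[C,A_1]$ to force the chain to stabilize.

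To produce $C$, I would use the sandwiching hypothesis applied to the pair $(\sigma_j,\sigma_1)$. Either every translate of $X_j$ crosses every element of $\Sigma_1$ (Case A) or there exist $C_0,D_0\in\Sigma_1$ with $C_0\leq X_j\leq D_0$, so by $G$-translation each $A_i\in\Sigma_j$ has a bound $C_i\in\Sigma_1$ with $C_i\leq A_i$ (Case B). Because $V$ is upward-closed under $\leq$, each $C_i$ in Case B lies in $V\cap\Sigma_1=V^{(1)}$, which is the basic vertex determined by $g$. The goal is then to show that only finitely many distinct $C_i$ occur, or more precisely that some $C_{i_0}$ satisfies $C_{i_0}\leq A_i$ for all $i\geq i_0$. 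The key input here is Lemma~\ref{BASIC}: the basic vertex $V^{(1)}$ satisfies DCC with respect to $\subset$, which controls how the $C_i$'s can descend inside the tree $T_1$ relative to $A_1$, and combined with the fact that the ``shadow'' of each $A_i$ in $\Sigma_1$ stabilizes, yields a uniform lower bound. Case A would be ruled out by a parallel argument tracing the chain $A_1\supsetneq A_2\supsetneq\cdots$ back to the layer $V^{(1)}$: if no element of $V^{(1)}$ were comparable to any $A_i$, the closure in Phase 1 would have placed each $A_i$ in $V$ only through a Zorn choice, and these choices are made coset-wise, leaving only finitely many distinct descendants of $A_1$ in $\Sigma_j\cap V$.

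The principal obstacle is Phase 2: because the Zorn step in Phase 1 introduces choices not tied to any single basepoint, one cannot immediately import the basic-vertex DCC argument of Lemma~\ref{BASIC}. The sandwiching hypothesis and Proposition~\ref{INTERVALS} together are exactly what ties chains in the higher layers back to $V^{(1)}$, where DCC is available, and the technical heart of the proof is making this reduction precise.
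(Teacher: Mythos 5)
Your Phase~1 is a legitimate alternative route to producing \emph{some} ultrafilter: the paper builds $V$ by an explicit rule (always adding, at Step $j$a, exactly the elements of $\Sigma(X_j)$ that contain the fixed basepoint $g$ and are not yet forced), whereas you replace that explicit choice with a Zorn's lemma maximality argument. That replacement is exactly what kills Phase~2, and the issues are more serious than your closing caveat suggests.

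First, a concrete error: you write ``Because $V$ is upward-closed under $\leq$, each $C_i$ in Case B lies in $V\cap\Sigma_1 = V^{(1)}$.'' Upward closure says that $A_i \in V$ and $A_i \leq D$ imply $D \in V$; it says nothing about an element $C_i$ \emph{below} $A_i$, and in general $C_i^*$ rather than $C_i$ may be what lies in $V$. So the sandwich bounds you translate into place from $\Sigma_1$ are not known to live in the basic layer $V^{(1)}$, and the appeal to Lemma~\ref{BASIC} has nothing to grab onto. Second, restricting attention to $\Sigma_1$ is structurally wrong: sandwiching of $\sigma_j$ by $\sigma_1$ allows the crossing alternative (your Case~A), in which no element of $\Sigma_1$ is $\leq$-comparable to any $A_i$, so there is simply no ``shadow in $\Sigma_1$'' to exploit. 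Your proposed dismissal of Case~A (``choices are made coset-wise, leaving only finitely many distinct descendants'') is not an argument about a Zorn-produced object; Zorn gives no canonical structure to the choices.

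What the paper actually does is different in kind. Its inductive construction \emph{records provenance}: every element of $V$ was either added because it contains $g$ (a type ``a'' step), or added at some type ``b'' closure step, in which case a specific witness $A \in V_{j-1}$ with $A \leq B$ is available by construction. Given a descending chain $B_1 \geq B_2 \geq \cdots$ in a single $\Sigma(X_j)$, the paper passes to the chain of witnesses $A_k \leq B_k$, iterating if necessary until each $A_k$ was added in a type ``a'' step and hence contains $g$; after a further subchain these $A_k$ all lie in one $\Sigma(X_{j'})$ with $j' < j$, are pairwise nested because all lie in $V$, and stabilize by Lemma~\ref{BASIC} because $g \in A_k$. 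Once the witnesses stabilize to a single $A$, interval finiteness (Proposition~\ref{INTERVALS}) applied to $[A, B_1]$ finishes. Notice: the witnesses may live in \emph{any} earlier $\Sigma(X_{j'})$, not just $\Sigma_1$, and they are in $V$ \emph{by construction}, not by an upward-closure argument. Both features are essential, and both are destroyed by replacing the explicit rule with Zorn. To salvage your Phase~2 you would have to re-derive the provenance data, at which point you have essentially reconstructed the paper's deterministic Step $j$a; I'd recommend dropping Zorn and carrying the witness bookkeeping through Phase~1 explicitly.
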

\begin{proof}
Fix $g \in G$.
We will start with a basic ultrafilter $V_1$ on $\Sigma(X_1)$,
take its closure in $\Sigma$,
and inductively add in part of a basic ultrafilter on each $\Sigma(X_j)$
until we've defined an ultrafilter on all of $\Sigma$.
The ultrafilters produced in all steps are, in order,
$V_1 \subset \overline{V_1} \subset V_2 \subset \overline{V_2} \subset \ldots \subset V_n =: V$.
In Step ja, we add to the ultrafilter $\overline{V_{j-1}}$ all elements $A$ of $\Sigma(X_j)$
such that $g \in A$ and neither $A$ nor $A^*$ was already in the ultrafilter,
to get  the ultrafilter $V_j$.
In Step jb, we take the closure of the ultrafilter from Step ja, to get $\overline{V_j}$.
In the end, we get an ultrafilter $V$ on $\Sigma$.

\begin{itemize}
	\item
	Step 1a:
	Define $V_1$ as a basic ultrafilter on $\Sigma(X_1)$:
	\[
		V_1 := \{A \in \Sigma(X_1) | g \in A\}.
	\]
	Let $\Sigma_1 := \Sigma (X_1).$
	On $\Sigma(X_1)$, the inclusion relation is the same as $\leq$,
	so Lemma~\ref{BASIC} proves that $V_1$ is an ultrafilter on $(\Sigma_1, \leq)$.
	\item
	Step 1b:
	Define $\overline{V_1}$ to be the closure of $V_1$ in $\Sigma$:
	\[
		\overline{V_1} :=
		V_1 \cup \{B \in \Sigma(X_2, X_3, \ldots, X_n) |
		\text{there exists } A \in V_1 \text{ with } A \leq B\}
	\]
	Let $\overline{\Sigma_1} := \{A \in \Sigma | A \in \overline{V_1} \text{ or } A^* \in \overline{V_1}\}$.
	By Lemma~\ref{CLOSURE}, $\overline{V_1}$ is an ultrafilter on $(\overline{\Sigma_1}, \leq)$.
\end{itemize}
Perform the following two steps for $1 < j < n$.
\begin{itemize}
	\item
	Step ja:
	Define $V_j$ to be the union of $\overline{V_{j-1}}$ and part of a basic ultrafilter on $\Sigma(X_j)$:
	\[
		V_j := \overline{V_{j-1}} \cup \{A \in \Sigma(X_j) |
		g \in A \text{ and } A \notin \overline{\Sigma_{j-1}}\}.
	\]
	Let $\Sigma_j := \{A \in \Sigma | A \in V_j \text{ or } A^* \in V_j\}$.
	To prove that $V_j$ is an ultrafilter on $(\Sigma_j, \leq)$,
	we must show that conditions 1 and 2 of the definition of ``ultrafilter'' are satisfied.
	\begin{enumerate}
		\item
		$\Sigma_j$ is defined to be the union of the elements of $V_j$ and their complements.
		We must show that if $A \in V_j$, then $A^* \notin V_j$.
		Suppose, for contradiction, that both $A$ and $A^*$ are elements of $V_j$.
		By construction, either $A, A^* \in \Sigma_j - \overline{\Sigma_{j-1}}$, or $A, A^* \in \overline{\Sigma_{j-1}}$,
		so that either $A, A^* \in V_j - \overline{V_{j-1}}$, or $A, A^* \in \overline{V_{j-1}}$.
		If $A, A^* \in \overline{V_{j-1}}$, this would contradict $\overline{V_{j-1}}$ being an ultrafilter,
		so we must have $A, A^* \in V_j - \overline{V_{j-1}}$.
		This implies $g \in A$ and $g \in A^*$, also a contradiction.
		Hence it is impossible to have simultaneously $A \in V_j$ and $A^* \in V_j$.
		
		\item
		Assume $B, C \in \Sigma_j$ with $B \in V_j$ and $B \leq C$.
		We must show that $C \in V_j$.
		
		We break up the proof that $C \in V_j$ into two cases,
		depending on whether $B$ was added to the ultrafilter in Step ja or a previous step.
		
		\begin{itemize}
			\item
			Suppose $B$ was added in Step ja, i.e. $B \in V_j - \overline{V_{j-1}}$.
			Note that $B \in \Sigma(X_j)$.
			Either $C \in \overline{\Sigma_{j-1}}$ or $C \in \Sigma_j - \overline{\Sigma{j-1}}$.
			We want to show that $C \in V_j$.
			\mbox{If $C \in \overline{\Sigma_{j-1}}$} and $C \notin V_j$,
			then we must have $C^* \in \overline{V_j}$.
			Then $C^* \leq B^*$ would imply $B^*$ was added to the ultrafilter by Step (j-1)b,
			so it would be impossible to have $B \in V_j - \overline{V_{j-1}}$.
			If instead $C \in \Sigma_j - \overline{\Sigma{j-1}}$, then $C \in \Sigma(X_j)$.
			As $B$ and $C$ are both elements of $\Sigma(X_j)$,
			having $B \leq C$ implies $B \subset C$.
			Hence $g \in B$ implies $g \in C$, so that $C \in V_j$.
			\item
			Suppose $B$ was added in a previous step, i.e. $B \in \overline{V_{j-1}}$.
			Then by construction of $ \overline{V_{j-1}}$,
			there exists $A \in V_{j-1}$ with $A \leq B$.
			By transitivity of $\leq$, we have $A \leq C$.
			It follows that $C \in \overline{V_{j-1}}$.
			As $\overline{V_{j-1}} \subset V_j$, we have $C \in V_j$.
		\end{itemize}
		In any case, we conclude $C \in V_j$.
	\end{enumerate}
	Hence $V_j$ is an ultrafilter on $\Sigma_j$.
	\item
	Step jb:
	Define $\overline{V_j}$ to be the closure of $V_j$ in $\Sigma$:
	\[
		\overline{V_j} := V_j \cup \{B \in \Sigma(X_{j+1}, X_{j+2}, \ldots, X_n) |
		\text{there exists } A \in V_j \text{ with } A \leq B\}.
	\]
	Let $\overline{\Sigma_j} := \{A \in \Sigma | A \in \overline{V_j} \text{ or } A^* \in \overline{V_j}\}$.
	By Lemma~\ref{CLOSURE}, $\overline{V_j}$ is an ultrafilter on $(\overline{\Sigma_j}, \leq)$.
\end{itemize}
Perform one last step to define an ultrafilter on all of $\Sigma$.
\begin{itemize}
	\item
	Step na (this is just Step ja with $j = n$)
\end{itemize}
Note that $\Sigma_n = \Sigma$, so there is no need for Step nb.
Let $V := V_n$.

We have successfully defined an ultrafilter $V$ on $(\Sigma, \leq)$. Next we prove that $V$ satisfies DCC.
To make the proof less cumbersome, I will write {\bf WLOG} to denote {\bf without loss of generality}.

Suppose we have an (infinite) descending chain $B_1 \geq B_2 \geq \ldots$ in $V$.
We will obtain $A_k \leq B_k$, show that the $A_k$'s stabilize,
and then show that the $B_k$'s stabilize.
\begin{enumerate}
	\item
	Since we're dealing with only $n$ splittings,
	WLOG (after passing to a subchain of $(B_k)_{1 \leq k}$)
	there exists a fixed $j \in \{1, 2, \ldots, n\}$ such that $B_k \in \Sigma(X_j)$.
	\item
	If there exists an infinite subchain of the $B_k$'s that were added to $V$ in Stepja,
	then since $g$ is in each element of the subchain,
	the proof of Lemma~\ref{BASIC} shows that the subchain must stabilize,
	so that the original chain stabilizes, and we're done.
	Otherwise, WLOG (after passing to a subchain of $(B_k)_{1 \leq k}$),
	each of the $B_k$'s was added to $V$ in a type ``b'' Step (before Stepja).
	Recall that $j$ was fixed in the previous step, and $B_k \in \Sigma(X_j)$, for all $k$.
	\item
	For each $B_k$, since $B_k$ was added in a type ``b'' Step before Step ja,
	there exists $A_k \in  V_{j-1}$ such that $A_k \leq B_k$.
	\item
	WLOG (after possibly replacing $A_k$ by something less than $A_k$),
	each $A_k$ was added in a type ``a'' Step, so that $g \in A_k$.
	\item
	WLOG (after passing to a subchain of $(A_k \leq B_k)_{1 \leq k}$),
	there exists a fixed $j'$ (for some $1 \leq j' \leq j-1$) such that $A_k \in \Sigma(X_{j'})$, for all $k$.
	In particular, all the $A_k$'s are nested.
	\item
	We cannot have $A_k \subset A_l^*$ or $A_k^* \subset A_l$
	(since all the $A$'s belong to the ultrafilter $V$),
	hence for all $k \neq l$, we must have $A_k \subset A_l$ or $A_k \supset A_l$.
	\item
	In this step, we show that the $A_k$'s stabilize.
	If there is an infinite subchain of the $A_k$'s such that each is contained in the next,
	then WLOG (after replacing each $A_k$ in the subchain by the first one)
	all the $A_k$'s in that subchain are equal, so move on to the next step of the proof.
	Otherwise, WLOG (after passing to a subchain of $(A_k \leq B_k)_{1 \leq k}$)
	we have $A_k \supset A_l$, for all $k < l$.
	Since $g \in A_k$ for all $k$, and since all of the $A_k$'s are in $\Sigma(X_{j'})$,
	the $A_k$'s must stabilize after finitely many steps (by Lemma~\ref{BASIC}).
	So WLOG (after passing to a subchain of $(A_k \leq B_k)_{1 \leq k}$),
	all the $A_k$'s are identical.
	\item
	Recall that $B_k \geq A_1$, for all $k$.
	We now have $B_1 \geq B_2 \geq \ldots \geq A_1$.
	But since \mbox{$\{\sigma_j | 1 \leq j \leq n\}$} satisfies sandwiching,
	this contradicts interval finiteness (see Proposition~\ref{INTERVALS}), unless the $B_k$'s stabilize.
	Hence the $B_k$'s stabilize.
\end{enumerate}
This completes the proof that $V$ satisfies DCC.
In particular, we have shown there exists an ultrafilter on $(\Sigma, \leq)$ satisfying DCC.
\end{proof}

We have shown that $L$ is nonempty.
Furthermore, the proof of Theorem~\ref{ULTRAFILTER} shows that each $A \in \Sigma$ is in some vertex $L$:
pick any $g \in A$, reorder the splittings (and their associated $X_j$)
such that $A \in X_1$, and construct $V$ as in the proof of Theorem~\ref{ULTRAFILTER}.

\subsection{Putting the $X_j$'s in ``Very Good Position''}

Let $G$ be any group.
Take any finite collection $\{\sigma_1, \ldots, \sigma_n\}$ of pairwise non-isomorphic splittings of $G$ satisfying sandwiching.
Let $X_j$ be a standard $H_j$-almost invariant set arising from $\sigma_j$,
and let $\Sigma = \{ gX_j, gX_j^* | g \in G, j = 1, \ldots, n \}$.
We have shown that $\Sigma$ is in good position,
i.e. if two corners of $(A,B)$ are small, then (at least) one is empty.
This allowed us to define the partial order $\leq$ on $\Sigma$.
In this section, we show how to find $X_j' \overset{H_j-a}{=} X_j$
such that \mbox{$\Sigma ' = \{ gX'_j, {gX'}_j^* | g \in G, j = 1, \ldots, n \}$} is in very good position,
i.e. a corner of the pair $(A', B')$ is small if, and only if, the corner is empty.
This result was previously proved in \cite{NibloSageevScottSwarup2005} for a finite collection of almost invariant sets
over finitely generated subgroups of a finitely generated group.

\begin{thm}
\label{VGP}
Let $G$ be any group with a finite collection $\{\sigma_j | j = 1, \ldots, n\}$ of pairwise non-isomorphic splittings.
Suppose $\{\sigma_j | j = 1, \ldots, n\}$ satisfies sandwiching.
For each $j$, let $X_j$ be a standard $H_j$-almost invariant set arising from $\sigma_j$.
Then there exist $X'_j \overset{H_j-a}{=} X_j$,
such that $\Sigma' := \mycup \limits_{j =1}^n \Sigma(X'_j)$
is in very good position.
\end{thm}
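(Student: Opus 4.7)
The plan is to apply the minimal cubing construction from the preceding subsections. Let $L$ be the $CAT(0)$ cubical complex built from $(\Sigma, \leq)$, with vertex set $L^0$ consisting of all ultrafilters on $(\Sigma, \leq)$ satisfying DCC; by Theorem~\ref{ULTRAFILTER} we have $L^0 \neq \emptyset$, so fix a base vertex $v_0 \in L^0$. Each $A \in \Sigma$ determines a halfspace $A^+ := \{V \in L^0 : A \in V\}$ of $L$, and the defining property of $L$ is that $A \leq B$ in $\Sigma$ implies $A^+ \subseteq B^+$ in $L$. This is the key feature I will exploit: the cubing upgrades the almost-inclusion partial order on $\Sigma$ to honest inclusion of subsets of $L^0$.

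For each $A \in \Sigma$, define $X'_A := \{g \in G : gv_0 \in A^+\} = \{g \in G : g^{-1}A \in v_0\}$, and set $X'_j := X'_{X_j}$. The map $A \mapsto X'_A$ is $G$-equivariant and respects complementation, so $\Sigma'$ is parametrized by $\Sigma$ in the same way $\Sigma$ itself is. By Lemma~\ref{BASICEQUIV}, each $X'_j$ is $H_j$-almost invariant, and its $H_j$-almost-equality class does not depend on the choice of $v_0$. To verify $X'_j \overset{H_j-a}{=} X_j$, I will use Lemma~\ref{BASICEQUIV} to reduce to exhibiting one convenient $v \in L^0$ for which the computation is direct: take the ultrafilter produced by the proof of Theorem~\ref{ULTRAFILTER} after reordering so that $\sigma_j$ appears first, built from a basepoint $g \in X_j$. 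Restricted to $\Sigma(X_j)$, this $v$ is the basic ultrafilter at $g$, from which a direct computation gives $X'_j = X_j g^{-1}$, which is $H_j$-almost equal to $X_j$ by $H_j$-almost invariance.

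The main step is verifying that $\Sigma'$ is in very good position. Let $A', B' \in \Sigma'$ and suppose the corner $A' \cap (B')^*$ is small; I must show it is empty. Let $A, B \in \Sigma$ be the corresponding elements under $A \mapsto X'_A$. Since $A' \overset{Stab(A)-a}{=} A$ and $B' \overset{Stab(B)-a}{=} B$, the corners $A' \cap (B')^*$ and $A \cap B^*$ differ by a small set, so $A \cap B^*$ is small in $\Sigma$. By good position of $\Sigma$ (Corollary~\ref{GOODPOSITION}), either $A \cap B^* = \emptyset$ outright, or $A \cap B^*$ is the unique small corner and $A \leq B$ in $\Sigma$; the case where two corners of $(A,B)$ are simultaneously small is handled by Proposition~\ref{EQUIVALENT}, which pins down the configuration when $A$ and $B$ arise from the same splitting. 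In each case the cubing then gives an inclusion of halfspaces $A^+ \subseteq B^+$ in $L$, hence $A^+ \cap (B^*)^+ = \emptyset$, hence $A' \cap (B')^* = \{g \in G : gv_0 \in A^+ \cap (B^*)^+\} = \emptyset$.

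The main obstacle is the very good position step, particularly its most delicate sub-case: when $A$ and $B$ arise from the same $\sigma_j$ and two corners of $(A,B)$ are simultaneously small. Here the cubing's inclusions of halfspaces must be carefully matched to the $\leq$-direction recorded in $\Sigma$; the matching is ultimately a consequence of how $L$ was built using $\leq$ and of the rigidity supplied by Proposition~\ref{EQUIVALENT}. Aside from this, the proof is essentially bookkeeping: constructing $L$, invoking Theorem~\ref{ULTRAFILTER} for nonemptiness, and using Lemma~\ref{BASICEQUIV} to control how $X'_j$ depends on $v_0$.
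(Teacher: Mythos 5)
Your overall strategy matches the paper's: build the cubing $L$ from $(\Sigma, \leq)$, invoke Theorem~\ref{ULTRAFILTER} to get a base vertex $v_0$, pull back halfspaces along the orbit of $v_0$ to define the $X'_j$, and use Lemma~\ref{BASICEQUIV} to get $X'_j \overset{H_j-a}{=} X_j$. The gap is in the very-good-position verification, precisely in the sub-case you flag as delicate.

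You conclude ``in each case the cubing then gives an inclusion of halfspaces $A^+ \subseteq B^+$,'' but this fails when two corners of $(A,B)$ are small, $A$ and $B$ lie in the same $\Sigma(X_j)$, and the \emph{empty} corner is $A^* \cap B$ rather than $A \cap B^*$. Then $B \subsetneq A$ with $A \cap B^*$ small and nonempty, so the $\leq$-relation recorded in $\Sigma$ is $B \leq A$, not $A \leq B$, and the cubing gives $B^+ \subseteq A^+$ --- the opposite of what you assert. In that configuration $A' \cap (B')^*$ is small but genuinely need not be empty: for a trivially ascending HNN extension, $\Sigma(X_j)$ is a chain of pairwise almost-equal nested sets, and the pulled-back sets still differ by a single nonempty $H_j$-coset. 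Proposition~\ref{EQUIVALENT} pins down \emph{when} two corners can be small, but it does not flip this inclusion. The paper sidesteps the problem by proving a different (and operationally sufficient) statement: that $X_j \mapsto X'_j$ induces a $G$-equivariant order isomorphism $(\Sigma, \leq) \to (\Sigma', \subset)$, established by chasing the two implications $A \leq B \Rightarrow A' \subset B'$ and $A' \subset B' \Rightarrow A \leq B$; in the two-small-corners case it uses the hypothesis $A \leq B$ to conclude $A \subset B$ and hence $A^+ \subseteq B^+$, never needing to turn a nonempty small corner into an empty one. If you reframe your target as that order isomorphism rather than the literal ``corner small iff corner empty,'' the troublesome sub-case disappears, because you only ever convert a small corner into an empty one when it is the corner witnessing the $\leq$-relation.
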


In preparation for proving the theorem, we take a look at how hyperplanes in the cubings $C$ and $L$ compare to each other.
For each $j$, let $\mathcal{H}_j$ be the hyperplane in $C$
determined by the equivalence class of edges in $C$ exiting $X_j$.
(Or, equivalently, the class of edges equivalent to any given edge exiting $X_j$,
with the equivalence relation generated by square-equivalence.)
Define the halfspace $\mathcal{H}_j^+$ of $C$ by:
\[
	\mathcal{H}_j^+ = \{V \in C^{(0)} | X_j \in V\}.
\]
For any vertex $v \in C$, define $(X_j)_v$ by:
\[
	(X_j)_v := \{g \in G | gv \in \mathcal{H}_j^+\}.
\]
Note that $(X_j)_{V_e} = X_j$,
where $V_e$ is the basic ultrafilter on $(\Sigma, \subset)$ consisting of all elements of $\Sigma$ containing the identity.
Let $\mathcal{K}$ be the hyperplane in $L$ determined by the equivalence class of edges exiting $X_j$.
Define the halfspace $\mathcal{K}_j^+$ of $L$ by:
\[
	\mathcal{K}_j^+ := \{W \in L^{(0)} | X_j \in W\}.
\]

Recall the canonical embedding $L_0 \hookrightarrow C_0$,
in which we view any vertex in $L$, i.e. an ultrafilter $W \subset \Sigma$ on $(\Sigma, \leq)$ satisfying DCC,
as an ultrafilter on $(\Sigma, \subset)$.
Now we will see how this extends to an embedding $L \hookrightarrow C$.
If two edges in $L$ are opposite sides of a square in $C$
then all four vertices of the square in $C$ are in $L$,
so that the other two edges of the square in $C$ are also in $L$.
Hence two edges in $L$ are on the opposite sides of a square in $L$ if, and only if,
they are on opposite sides of a square in $C$.
It follows that $\mathcal{H}_j \cap L = \mathcal{K}_j$ and
$\mathcal{H}_j^+ \cap L = \mathcal{K}_j^+$.

Now we can use the cubing $L$ to put the $X_j$'s in very good position.

\begin{lem}
Fix a vertex $w \in L \subset C$, and define $X'_j := \{g \in G | gw \in \mathcal{K}_j^+ \}$.
Then each $X'_j$ is $H_j$-almost invariant, the collection $\Sigma' := \mycup \limits_{j =1}^n \Sigma(X'_j)$ is in very good position,
and $X_j \mapsto X'_j$ induces a $G$-equivariant isomorphism from $(\Sigma, \leq)$ to $(\Sigma', \subset)$.
\end{lem}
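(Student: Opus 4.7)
The plan is to verify the three claims in order, relying on Lemma~\ref{BASICEQUIV}, the ultrafilter interpretation of vertices of $L$, and the non-isomorphic splittings hypothesis.

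First, since $H_j = Stab(X_j)$ stabilizes the hyperplane $\mathcal{K}_j$ and preserves each of $\mathcal{K}_j^+$ and $\mathcal{K}_j^-$, Lemma~\ref{BASICEQUIV} applied in the ambient cubing $C$ (through the embedding $L \hookrightarrow C$) immediately gives that $X'_j$ is $H_j$-almost invariant. Comparing $w$ to the basic vertex $V_e \in C$ (the ultrafilter consisting of all elements of $\Sigma$ that contain the identity) via the same lemma yields $X'_j \overset{H_j-a}{=} X_j$. For the bijection, set $A' := g_0 (X'_j)^\epsilon$ when $A = g_0 X_j^\epsilon \in \Sigma$; a direct calculation shows $A' = \{g \in G : gw \in \mathcal{K}_A^+\}$, where $\mathcal{K}_A^+$ is the appropriate $G$-translate of a half-space of $\mathcal{K}_j$, so the assignment is $G$-equivariant and respects complementation. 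Injectivity within each orbit $\Sigma(X_j)$ holds because $Stab(X'_j) = H_j$, and injectivity across orbits uses that the splittings are pairwise non-isomorphic.

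The core of the proof is the order equivalence $A \leq B \iff A' \subset B'$. For the forward direction, given $A \leq B$ and $g \in A'$, the vertex $gw$ corresponds to the ultrafilter $V_{gw} = g V_w$ on $(\Sigma, \leq)$ satisfying DCC (since $V_w$ does); as $A \in V_{gw}$ and $A \leq B$, the ultrafilter axiom gives $B \in V_{gw}$, hence $gw \in \mathcal{K}_B^+$ and $g \in B'$. For the reverse direction, I assume $A \not\leq B$ and aim to produce some $g$ with $gw \in \mathcal{K}_A^+ \setminus \mathcal{K}_B^+$. Because $A \overset{H_j-a}{=} A'$ and $B \overset{H_k-a}{=} B'$ (and smallness is unambiguous by Proposition~\ref{SYMMETRIC}), the corner $A' \cap (B')^*$ differs from $A \cap B^*$ only by a small set. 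If $A \cap B^*$ is not small, then both corners are large and $A' \not\subset B'$ follows directly. So suppose $A \cap B^*$ is small. If it is the unique small corner of $(A, B)$, then $A \leq B$ by definition, contradicting the hypothesis. Hence some other corner of $(A, B)$ is also small, and Proposition~\ref{EQUIVALENT} combined with the pairwise non-isomorphic hypothesis forces $A, B \in \Sigma(X_j)$ for the same $j$. In this remaining subcase I will use the $G$-equivariant collapse $\pi_j \colon L \to T_{\sigma_j}$ obtained by collapsing every hyperplane orbit of $L$ other than $G \cdot \mathcal{K}_j$: it sends $w$ to a vertex $w_j \in T_{\sigma_j}$ and identifies $X'_j$ with the standard almost invariant set $\{g : e_j \text{ points away from } g w_j\}$ associated to $w_j$, after which the tree structure of $T_{\sigma_j}$ forces $A' \subset B'$ to imply $A \leq B$.

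Finally, very good position on $\Sigma'$ is the statement that inclusion agrees with almost inclusion, and this follows at once from the order equivalence, since the $\leq$-relation transported through the bijection coincides with $\leq$ on $\Sigma$. The chief obstacle will be the last subcase of the reverse direction: establishing that $\pi_j$ identifies $X'_j$ with a standard almost invariant set for $\sigma_j$ requires carefully tracking hyperplane orbits through the collapse $L \to T_{\sigma_j}$, and it is here that the cubing machinery assembled in the preceding sections is indispensable.
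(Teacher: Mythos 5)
Your proof is essentially correct, but the two halves of the order-isomorphism argument are in some tension with the paper's presentation. For the forward direction ($A \leq B \Rightarrow A' \subset B'$) you argue directly from the ultrafilter axiom applied to $gw$: given $g \in A'$, the set $A$ lies in the ultrafilter $gV_w$, and $A \leq B$ forces $B \in gV_w$, hence $g \in B'$. This is cleaner than what the paper writes (the paper deduces $A' \subset B'$ from nestedness together with smallness of a corner), and it is the more natural argument. For the reverse direction, however, you take a detour that is both unnecessary and not fully carried out. After narrowing to the case $A, B \in \Sigma(X_j)$ with two small corners, you invoke a collapse $\pi_j\colon L \to T_{\sigma_j}$ and declare that ``the tree structure of $T_{\sigma_j}$ forces $A' \subset B'$ to imply $A \leq B$,'' while acknowledging this as your ``chief obstacle.'' The paper disposes of this case far more directly: once you know $A' \subset B'$ implies $A \cap B^*$ is small, good position on $\Sigma$ (and nontriviality of the almost invariant sets) yields $A \leq B$ or $B \leq A$; and if $B \leq A$ with $A \neq B$, the forward direction gives $B' \subset A'$, so $A' = B'$, and injectivity of the map (which follows since $X'_j$ is a standard almost invariant set from the same tree $T_{\sigma_j}$, hence $\mathrm{Stab}(X'_j) = H_j$) forces $A = B$, a contradiction. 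There is no need to collapse $L$ or track hyperplane orbits; the forward direction plus antisymmetry suffices. Finally, your reading of very good position as ``inclusion agrees with almost inclusion'' is the correct operational characterization, and your observation that it follows from the order isomorphism is right (because an $A' \cap (B')^*$ that is the unique small corner of $(A', B')$ already forces $A \leq B$ and hence emptiness by the isomorphism), though you state it a bit more telegraphically than the logic warrants.
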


\begin{proof}
Viewing $w$ as a vertex in $C$, we have $(X_j)_w = \{ g \in G | gw \in \mathcal{H}_j^+ \}$.
As $L \hookrightarrow C$ is \mbox{$G$-equivariant,}
it follows that $X'_j = (X_j)_w$.
Now, applying Lemma~\ref{BASICEQUIV},
$X'_j = (X_j)_w$ is $H_j$-almost invariant and $H_j$-almost equal to $(X_j)_{V_e} = X_j$.
Clearly $\Sigma'$ is in very good position.

Next we show, as proved in Lemma 4.1 of \cite{NibloSageevScottSwarup2005},
that $X_j \mapsto X'_j$ induces a $G$-equivariant isomorphism from $(\Sigma, \leq)$ to $(\Sigma', \subset)$.
We are assuming that none of the $X_j$ come from isomorphic splittings.
$w$ is an ultrafilter on $(\Sigma, \leq)$, so for all $g \in G$,
$gw$ is also an ultrafilter on $(\Sigma, \leq)$.
\mbox{If $A, B \in \Sigma$,} let $A', B'$ denote the images in $\Sigma'$ of $A$ and $B$
by the map that sends $X_j \mapsto X'_j$.

Suppose $A \leq B$, i.e. $A \cap B^*$ is empty or the only small corner of the pair $(A,B)$.
Note that since $X_j \overset{H_j-a}{=} X'_j$,
we have $A \overset{Stab(A)-a}{=} A'$ and $B \overset{Stab(B)-a}{=} B'$.
Hence a corner of the pair $(A,B)$ is small if, and only if, the corresponding corner of the pair $(A', B')$ is small.
If $A \cap B^*$ is the only small corner of the pair $(A,B)$,
then since $A'$ and $B'$ are nested,
we must have $A' \subset B'$, as desired.
If the pair $(A, B)$ has two small corners, then $A \subset B$.
To see that $A' \subset B'$, simply note that
since $X'_j = \{ g \in G | gw \in \mathcal{H}_j^+\}$,
we have $A' = \{g \in G | A \in gw \}$,
and $B' = \{ g \in G | B \in gw \}$.
In either case, we conclude that $A' \subset B'$.

Conversely, suppose that $A' \subset B'$.
We need to show that $A \leq B$.
Since $\Sigma$ is in good position, it follows that $A \leq B$ or $B \leq A$.
The above paragraph shows that $A \leq B$.
Hence $A \leq B$ if, and only if, $A' \subset B'$.
\end{proof}

This completes the proof of Theorem~\ref{VGP}.

\section{Existence of Algebraic Regular Neighborhoods}
\label{AlgebraicRegularNeighborhoodsE}

Start with a finite collection of splittings $\{\sigma_1, \ldots, \sigma_n\}$ over subgroups $H_j$ of a group $G$.
Let $X_j$ be an $H_j$-almost-invarint set arising $\sigma_j$.
Suppose $\{X_1, \ldots, X_n\}$ satisfies sandwiching
(see Definition~\ref{DEFNSANDWICHING}).
Also assume no two of the $\sigma_j$'s are isomorphic to each other.
We will construct a bipartite $G$-tree $T(X_1, \ldots, X_n)$,
and show $T$ is an algebraic regular neighborhood of $\{\sigma_1, \ldots, \sigma_n\}$ (see Definition~\ref{defnARN}).

Let $\Sigma$ denote the set of all translates of all the $X_j$ and their complements.
Since no two $\sigma_j$'s are isomorphic to each other, $\leq$ defines a partial order on $\Sigma$
(see Corollary~\ref{PARTIALORDER}).
Construct the cubing $L$ from $(\Sigma, \leq)$,
as detailed in Section~\ref{VeryGoodPosition}.
We will construct a bipartite tree from the $L$.

Define a ``cross connected'' relation on $\Sigma$ as follows:
\begin{equation*}
	A \text{ is cross-connected to } B \iff
	\begin{array}{l}
		A \text{ is equal to } B \text{ or } B^* \text{, or} \\
		\text{there exists some } m \geq 0
		\text{ and a sequence } (A, B_1, \ldots, B_m, B)\\
		\text{such that } A \text{ crosses } B_1, B_1 \text{ crosses } B_2, \ldots,\\
		B_{m-1} \text{ crosses } B_m,
		\text{ and } B_m \text{ crosses } B.\\
	\end{array}
\end{equation*}
This defines an equivalence relation.
Call each equivalence class a {\bf cross connected component} ($ccc$).

One can easily see $ccc$'s of $\Sigma$ from looking at the cubing $L$.
Each (directed) hyperplane in the cubing corresponds to a unique element of $\Sigma$.
If we remove all cut vertices from the cubing,
we are left with a disjoint collection of components,
where each component is a subcubing with some vertices missing.
Each $ccc$ has all its hyperplanes contained in a single component.
Moreover, since the components have no cut vertices,
each component's hyperplanes come from only one $ccc$.
So we have a natural bijective correspondence between components and the $ccc$'s.

We introduce some basic notation.
View the cubing as a disjoint union of subcubings which are glued together at cut vertices.
Let $CUT$ denote the set of cut vertices.
Let $SUB$ denote the set of (disjoint) subcubings.
Note that we have a bijective correspondence between $SUB$ and the $ccc$'s of $\Sigma$.
For a given subcubing $\alpha \in SUB$, define the {\bf corner vertices of $\alpha$} to be
the vertices of $\alpha$ that are glued to cut vertices.
Let $CRN(\alpha)$ denote the set of corner vertices of $\alpha$.

For each subcubing $\alpha \in SUB$,
create a tree whose vertices are $CRN(\alpha)$ plus a central vertex,
and an edge connecting each element of $CRN(\alpha)$ to the central vertex.
Call this tree~$\bigstar_\alpha$.

Glue the $\bigstar_\alpha$'s together by, for each element of $CUT$,
identifying all corner vertices of all the~$\bigstar_\alpha$'s which came from that element of $CUT$.
Color the equivalence classes of corner vertices as $V_1$-vertices,
and color all the central vertices as $V_0$-vertices.

The result is a bipartite $G$-tree.
Let $T(X_1, \ldots, X_n)$ denote this tree.

\begin{thm}[Existence of algebraic regular neighborhoods]
\label{thmARNE}
Let $G$ be any group with a finite collection $\{\sigma_j | j = 1, \ldots, n\}$ of pairwise non-isomorphic splittings.
Suppose $\{\sigma_j | j = 1, \ldots, n\}$ satisfies sandwiching.
For each $j$, let $X_j$ be a standard $H_j$-almost invariant set arising from $\sigma_j$.
Then $T := T(X_1, \ldots, X_n)$ is algebraic regular neighborhood of $\{\sigma_j | j = 1, \ldots, n\}$.
\end{thm}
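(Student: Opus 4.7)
The plan is to verify the five conditions of Definition~\ref{defnARN} one by one, exploiting the bijection between subcubings of $L$ and cross-connected components of $\Sigma$, and the fact (from Theorem~\ref{ULTRAFILTER}) that $L$ is nonempty and that every element of $\Sigma$ appears as a hyperplane of $L$. Minimality (condition 3) follows from the construction: each subcubing $\alpha$ has at least one hyperplane and at least two corner vertices, so no $V_0$-vertex is terminal; every $V_1$-vertex arises from a cut vertex of $L$ and hence has valence at least two in $T$; a standard argument then rules out proper $G$-invariant subtrees.

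For condition~1, given $j$, let $\alpha_j$ be the subcubing containing the hyperplane $\mathcal{K}_j$ dual to $X_j$, and let $v_j$ be the central $V_0$-vertex of $\bigstar_{\alpha_j}$. The hyperplane $\mathcal{K}_j$ partitions $\mathrm{CRN}(\alpha_j)$ into the corners lying in $\mathcal{K}_j^+$ and those in $\mathcal{K}_j^-$. I would refine $T$ by inserting, equivariantly at each vertex in the orbit $Gv_j$, a single new edge $e_j$ that splits $v_j$ into two vertices, the first carrying the spokes to the $\mathcal{K}_j^+$-corners and the second carrying the spokes to the $\mathcal{K}_j^-$-corners. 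Collapsing the $e_j$-orbit recovers $T$; collapsing everything except the $e_j$-orbit yields a $G$-tree whose edge stabilizer is $H_j$ and whose halfspaces are $X_j$ and $X_j^*$, so by Bass--Serre theory it is $T_{\sigma_j}$. Conversely, every $V_0$-orbit encloses some $\sigma_j$ because its subcubing contains at least one hyperplane, each hyperplane corresponds to some $X_k$, and enclosure of $\sigma_k$ is obtained as above.

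For condition~2, let $\sigma$ be a splitting over $H$ satisfying $i(\sigma,\sigma_j)=0$ and sandwiching with every $\sigma_j$, and let $Z$ be a standard almost invariant set from $\sigma$. By Proposition~\ref{SYMMETRIC} and the hypothesis, every translate $gZ$ is $\leq$-comparable to every element of $\Sigma$, so $Z$ picks a side of each hyperplane of $L$ consistently: on each subcubing $\alpha$ this gives an ultrafilter on the hyperplanes of $\alpha$ satisfying DCC (sandwiching by the $\sigma_j$'s yields interval finiteness within $\alpha$ via Proposition~\ref{INTERVALS}, so the ultrafilter lands at a vertex, and because $\alpha$'s subcubing-vertices at corners are precisely its corners, this vertex is in $\mathrm{CRN}(\alpha)$). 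Across different $\alpha$ glued at a common cut vertex the chosen corners must coincide, so they determine a single $V_1$-vertex $w$, and the orbit $Gw$ carries the family of choices for the translates $gZ$. Inserting one $\sigma$-edge equivariantly at each vertex of $Gw$, separating neighbors according to which side of $Z$ they lie on, produces a refinement whose new edge orbit forms $T_\sigma$ by Corollary~\ref{UNIQUENESSPO} and Lemma~\ref{BASICEQUIV}. Conditions~4 and~5 then follow from a case analysis: $\sigma_j$ is isolated iff its ccc is the pair $\{X_j,X_j^*\}$ iff $\alpha_j$ is a single edge, in which case $\bigstar_{\alpha_j}$ has central vertex of valence two and yields an isolated $V_0$-orbit; the assignment $j\mapsto Gv_j$ is clearly a bijection onto the isolated $V_0$-orbits, and a non-isolated $V_0$-orbit comes from a subcubing containing a crossing pair of hyperplanes, hence a non-isolated $\sigma_k$ that it encloses.

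I expect the main obstacle to be condition~2, specifically the step of producing an actual corner vertex (not just an abstract ultrafilter) from the external splitting $\sigma$. The delicate points are (a) verifying that the ultrafilter induced by $Z$ on each subcubing really does satisfy DCC, which requires reusing the sandwiching/interval-finiteness argument from the proof of Theorem~\ref{ULTRAFILTER} in the restricted setting of a subcubing, and (b) confirming that the family of corners chosen across all subcubings is $G$-equivariantly consistent at cut vertices, so that what looks locally like a $V_1$-vertex is actually a single well-defined orbit of $T$ enclosing $\sigma$ rather than several. Once these technical consistency checks are carried out, the enclosure refinement is forced and the remaining conditions fall out of the combinatorial structure of $\bigstar_\alpha$.
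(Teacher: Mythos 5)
Your outline is attractive but has genuine gaps in three of the five conditions, and in each case the paper's own argument takes a different and more careful route.

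For condition~1, the construction of the refinement is wrong in general. You insert a single new edge $e_j$ at each vertex in the orbit $Gv_j$, so the new edge orbit has the form $G/\operatorname{Stab}(v_j)=G/\operatorname{Stab}(\alpha_j)$. But a cross-connected component $\alpha_j$ may contain \emph{several} $\Sigma(X_j)$-hyperplanes (translates of $\mathcal{K}_j$ linked by a chain of crossings through hyperplanes from other $\Sigma(X_k)$'s), in which case $\operatorname{Stab}(\alpha_j)\supsetneq H_j$ and your edge orbit $G/\operatorname{Stab}(\alpha_j)$ is strictly smaller than the edge set $G/H_j$ of the Bass--Serre tree $T_{\sigma_j}$. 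Collapsing the non-new edges then cannot yield $T_{\sigma_j}$. The paper avoids this by replacing the central vertex $v_j$ by the \emph{dual tree} $\#_\alpha$ to the set of all $\Sigma(X_j)$-hyperplanes in $\alpha$, which by nestedness of $\Sigma(X_j)$ is a genuine tree whose edges account for every $X_j$-translate inside $\alpha$; it then attaches each corner of $\alpha$ to the vertex of $\#_\alpha$ matching its component.

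For condition~2, your plan of picking a side of each hyperplane via $Z$ and hoping to land on a corner vertex of each subcubing is not carried through, and you flag exactly the steps that are at issue: verifying DCC for the induced ultrafilter, showing the selected vertex of each $\alpha$ really lies in $\operatorname{CRN}(\alpha)$ rather than in its interior, and showing the choices are coherent at cut vertices and form a single $G$-orbit. None of these is obvious, and without them the enclosure is not established. The paper sidesteps all three by enlarging the pocset: it builds the cubing and the bipartite tree for $\Sigma(X_1,\dots,X_n,X)$, observes that each \emph{new} subcubing is a single edge because $X$ crosses nothing in $\Sigma$, and then exhibits the two collapse maps directly. This is both shorter and avoids having to chase ultrafilter consistency by hand.

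For condition~3, the claim that no $V_0$- or $V_1$-vertex is terminal together with ``a standard argument'' does not prove minimality: a $G$-tree can have every vertex of valence at least two and still contain a proper $G$-invariant subtree (for example a line acted on trivially). The paper's argument genuinely uses sandwiching: it takes the minimal subtree $T_0$, rules out $T_0$ consisting of a single $V_1$-vertex using DCC and the infinite chains inside a $\Sigma(X_j)$, and then rules out $T_0\subsetneq T$ by applying sandwiching of $\sigma_j$ by $\sigma_k$ for a $V_0$-vertex in $T-T_0$ to produce a translate of a $T_0$-vertex that would have to escape $T_0$. Your conditions~4 and~5 are in line with the paper's.
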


To prove the theorem, we need to show that $T$ satisfies the five conditions of the definition of
algebraic regular neighborhood (from Page \pageref{DEFNofARN}).

\begin{lem}[First condition]
Each $\sigma_j$ is enclosed by some $V_0$-vertex orbit in $T$,
and each $V_0$-vertex orbit encloses some $\sigma_j$.
\end{lem}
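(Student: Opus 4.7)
The plan is to exhibit, for each $\sigma_j$, a natural $V_0$-vertex orbit that encloses it, and to show that every $V_0$-vertex orbit encloses some $\sigma_j$ this way. Let $\mathcal{C}_j$ denote the cross-connected component of $\Sigma$ containing $X_j$, let $\alpha_j$ be the corresponding subcubing in $SUB$, and let $V_j := V_{\alpha_j}$ be the central $V_0$-vertex of $\bigstar_{\alpha_j}$. I claim $G\cdot V_j$ encloses $\sigma_j$. The second assertion of the lemma will then follow immediately: an arbitrary $V_0$-vertex is $V_\alpha$ for some $\alpha \in SUB$, whose corresponding ccc $\mathcal{C}_\alpha$ is nonempty; picking any $A \in \mathcal{C}_\alpha$ and using $\Sigma = \bigcup_j \Sigma(X_j)$, we get $A \in \Sigma(X_j)$ for some $j$, whence $\mathcal{C}_\alpha$ is a $G$-translate of $\mathcal{C}_j$ and $V_\alpha \in G\cdot V_j$ encloses $\sigma_j$.

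To produce the refinement $T'$ witnessing enclosure, I would replace, for each $\alpha$ in the $G$-orbit of $\alpha_j$, the central vertex $V_\alpha$ by a subtree $\tau_\alpha$ built from the subset $\Sigma_j(\alpha) := \Sigma(X_j) \cap \mathcal{C}_\alpha$. Since all elements of $\Sigma(X_j)$ come from the same splitting $\sigma_j$, any two of them are nested under $\leq$ (this is precisely what makes $\Sigma(X_j)$ the directed edge set of the Bass--Serre tree of $\sigma_j$), so $(\Sigma_j(\alpha), \leq, *)$ satisfies conditions~(1), (3), and~(4) of Dunwoody's theorem; condition~(2), interval finiteness, follows from sandwiching via Proposition~\ref{INTERVALS}. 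Dunwoody's theorem then yields a tree $\tau_\alpha$ with directed edge set $\Sigma_j(\alpha)$. Each corner vertex $W$ of $\bigstar_\alpha$ (which, as a vertex of $L$, is itself an ultrafilter on $(\Sigma, \leq)$) is reattached to the vertex of $\tau_\alpha$ corresponding to the restriction of that ultrafilter to $\Sigma_j(\alpha)$. Carry out this replacement $G$-equivariantly across the orbit $G\alpha_j$ to obtain a $G$-tree $T'$ whose newly inserted edges form the intended $G$-orbit of $\sigma_j$-edges, with edge stabilizer $H_j$.

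Finally, I would verify the two collapse identities defining enclosure. Collapsing all $\sigma_j$-edges in $T'$ contracts each $\tau_\alpha$ back to a point, recovering $V_\alpha$ and hence $T$; this is immediate from the construction. Collapsing all non-$\sigma_j$ edges leaves only the $\tau_\alpha$ for $\alpha \in G\alpha_j$, joined through the now-collapsed rest of $T$; by $G$-equivariance the directed edge set of the result is $\bigcup_{g \in G} g\,\Sigma_j(\alpha_j) = \Sigma(X_j)$, with edge orbit $G\cdot X_j$ and stabilizer $H_j$, so Dunwoody's theorem identifies the result with the Bass--Serre tree of $\sigma_j$. The main obstacle is this last identification: one must check that the $\leq$-relations between elements of $\Sigma_j(\alpha)$ and $\Sigma_j(\alpha')$ lying in distinct ccc's are correctly reproduced by the combinatorics of how the $\tau_\alpha$'s are glued together inside $T'$. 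This reduces to the fact that elements of distinct ccc's never cross and hence are nested, with the nesting detected by which sides of the cut vertices of $L$ separating $\alpha$ from $\alpha'$ they lie on; verifying this cleanly is the technical heart of the argument.
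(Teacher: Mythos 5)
Your construction is essentially the paper's: for each $\alpha$ in the orbit of the subcubing containing a $\Sigma(X_j)$-hyperplane, replace the central $V_0$-vertex of $\bigstar_\alpha$ by the tree dual to the $\Sigma(X_j)$-hyperplanes in $\alpha$ (the paper calls it $\#_\alpha$; you build it via Dunwoody, which is the same thing since $\Sigma(X_j)$ is pairwise nested), reattach each corner vertex to the vertex of that tree determined by which complementary region it lies in, glue along $CUT$, and then read off the two collapse maps. Your argument for the ``each $V_0$-vertex orbit encloses some $\sigma_j$'' half is also the intended one. Two small remarks: interval finiteness of $\Sigma_j(\alpha)$ does not need Proposition~\ref{INTERVALS} or sandwiching --- it is automatic because $\Sigma(X_j)$ is the directed edge set of the Bass--Serre tree of $\sigma_j$, so it is free to you. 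And the verification you flag at the end (that collapsing the non-$\sigma_j$ edges reproduces the Bass--Serre tree, via the nesting across cut vertices) is exactly the step the paper also leaves implicit, so you have matched rather than fallen short of the paper's level of detail there; but you are right that it is the place where a fully rigorous treatment would have work to do.
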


\begin{proof}
Fix $j$. We will use the original cubing $L$ to construct a particular refinement of $T$.
Recall that $CUT$ denotes the cut vertex set of the original cubing,
$SUB$ denotes the set of subcubings,
and $CRN(\alpha)$ denotes the set of corner vertices of $\alpha \in SUB$.
For each subcubing $\alpha \in SUB$ not containing any $X_j$-hyperplanes,
define $\bigstar_\alpha$ as above.

For each subcubing $\alpha \in SUB$ that contains $\Sigma(X_j)$-hyperplanes.
Let $\#_\alpha$ denote the dual tree to the $\Sigma(X_j)$-hyperplanes in $\alpha$.
For each element of $CRN(\alpha)$, make a vertex and attach it to $\#_\alpha$ with an edge.
Specifically, attach the edge to the vertex of $\#_\alpha$
that corresponds to the component of $\alpha~-~(\Sigma(X_j) \text{-hyperplanes})$ containing the corner vertex.
Call this tree $\bigstar'_\alpha$.

Glue the $\bigstar'_\alpha$'s (for $ccc$'s containing $\Sigma(X_j)$-hyperplanes)
and $\bigstar_\alpha$'s (for $ccc$'s not containing $\Sigma(X_j)$-hyperplanes)
together by, for each element of $CUT$,
identifying all corner vertices which came from that element of $CUT$.
Color the equivalence classes of corner vertices as $V_1$-vertices,
and color all the other vertices as $V_0$-vertices.

This new tree maps naturally to $T$ by collapsing the new edge orbit.
On the other hand, the new tree maps to a tree for $\sigma$
by collapsing all edges except for the new edge orbit.
\end{proof}

\begin{lem}[Second condition]
\label{ARN2}
If $\sigma$ is a splitting of $G$ over $H$,
where for all $j \in J$
$\sigma$ is sandwiched by $\sigma_j$
and $i(\sigma, \sigma_j) = 0$,
then $\sigma$ is enclosed by some $V_1$-vertex orbit in $T$.
\end{lem}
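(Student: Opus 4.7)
The plan is to reduce this to the first condition already proved, by enlarging the splitting family to include $\sigma$ and then transferring the conclusion back to $T$. Let $Y$ be a standard $H$-almost invariant set arising from $\sigma$. By $i(\sigma, \sigma_j) = 0$ and Proposition~\ref{SYMMETRIC}, no translate of $Y$ crosses any element of $\Sigma$; by hypothesis, each translate of $Y$ is sandwiched by each $\sigma_j$. I may assume $\sigma$ is not isomorphic to any $\sigma_j$, as otherwise the intersection-number hypothesis forces $\sigma_j$ to be isolated and the conclusion follows from condition~4 of Definition~\ref{defnARN} by picking either $V_1$-vertex neighbor of the isolated $V_0$-vertex orbit enclosing $\sigma_j$.

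First I would form $\Sigma^+ := \Sigma \cup \Sigma(Y)$, extending $\leq$ across the two families. The proof that $\leq$ remains a partial order on $\Sigma^+$ is the argument of Corollary~\ref{PARTIALORDER}, using the non-crossing between $\Sigma(Y)$ and $\Sigma$ together with the non-isomorphism of $\sigma$ with each $\sigma_j$. I then rerun the ultrafilter construction of Theorem~\ref{ULTRAFILTER} on $\Sigma^+$, processing the $\sigma_j$'s first and $\sigma$ last: the ``b-step'' closures only involve $\Sigma$, and the DCC argument's appeal to interval finiteness via Proposition~\ref{INTERVALS} requires only the sandwiching within $\Sigma$ together with the (hypothesized) sandwiching of $\sigma$ by each $\sigma_j$. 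This produces a nonempty cubing $L^+$ over $(\Sigma^+, \leq)$, from which I then form the bipartite tree $T^+ := T(X_1, \ldots, X_n, Y)$ following the construction at the start of this section.

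By the first condition proved above, some $V_0$-vertex orbit $\mathcal{O}^+$ of $T^+$ encloses $\sigma$. Because no element of $\Sigma(Y)$ crosses any element of $\Sigma$, each cross-connected component of $\Sigma^+$ lies entirely in $\Sigma$ or entirely in $\Sigma(Y)$, so every subcubing of $L^+$ is of pure type, and $\mathcal{O}^+$ is the central $V_0$-vertex orbit of a $\Sigma(Y)$-subcubing. The $G$-equivariant restriction-of-ultrafilters map $\pi \colon L^+ \to L$ collapses each $\Sigma(Y)$-subcubing (along which every $\Sigma$-coordinate is constant) to a single cut vertex of $L$, hence induces a $G$-equivariant simplicial map $T^+ \to T$ that crushes $\mathcal{O}^+$ onto a $V_1$-vertex orbit $\mathcal{O}_1$ of $T$.

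To conclude I verify that $\mathcal{O}_1$ encloses $\sigma$. The refinement of $T^+$ witnessing that $\mathcal{O}^+$ encloses $\sigma$ has its inserted $\sigma$-edge orbit sitting inside $\mathcal{O}^+$'s subcubing. Pushing this refinement forward through $T^+ \to T$ preserves the $\sigma$-edge orbit, crushes the two pieces of $T^+$ lying on either side of that orbit over each vertex of $\mathcal{O}_1$ to the two new vertices created by the insertion, and yields a refinement of $T$ whose inserted edge orbit is attached exactly at $\mathcal{O}_1$, with the edges of $T$ at each $c \in \mathcal{O}_1$ bipartitioned according to which side of $Y$ the attached $\Sigma$-subcubings lie on. Collapsing the inserted orbit recovers $T$; collapsing all other orbits recovers the tree for $\sigma$. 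The main obstacle throughout is justifying that $L^+$ is nonempty without full sandwiching of the enlarged family $\{\sigma_1, \ldots, \sigma_n, \sigma\}$, and this is exactly the point at which the non-crossing of $\Sigma(Y)$ with $\Sigma$ is essential, allowing the ultrafilter construction to be stratified so that the absent ``reverse'' sandwiching is never invoked.
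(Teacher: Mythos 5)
Your overall strategy—enlarge the family to $\Sigma^+ = \Sigma \cup \Sigma(Y)$, construct the new cubing and tree $T(X_1,\dots,X_n,Y)$, observe that each $\Sigma(Y)$-subcubing is a single edge, and project down to recover both $T$ and a tree for $\sigma$—is exactly the paper's argument, and your extra case-split for $\sigma \cong \sigma_j$ is a sensible supplement that the paper silently skips.

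However, the claim at the heart of your justification of nonemptiness of $L^+$ does not hold up. You assert that by processing $\sigma$ last, the DCC argument only needs the given sandwiching of $\sigma$ by each $\sigma_j$ and never invokes the reverse sandwiching. But look at Step~8 of the proof of Theorem~\ref{ULTRAFILTER}: after passing to a subchain one has a descending chain $B_1 \geq B_2 \geq \dots$ in some $\Sigma(X_j)$, bounded below by a single $A_1$, and stabilization is deduced from interval finiteness for $[A_1,B_1]$. Following Proposition~\ref{INTERVALS}, that interval finiteness requires $A_1$ to be sandwiched by $\sigma_j$. When the descending chain lives in $\Sigma(Y)$ (i.e.\ $j$ corresponds to the new splitting $\sigma$, processed last), the bound $A_1$ was picked up during an earlier closure step and lies in some $\Sigma(X_{j'})$ with $j' \le n$ — so one needs $A_1$ to be sandwiched by $\sigma$, which is precisely the reverse direction the hypotheses do not provide. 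The stratified ordering you propose does not bypass this: elements of $\Sigma(Y)$ may be \emph{forced} by the closure steps, and a forced descending chain in $\Sigma(Y)$ lower-bounded by an element of $\Sigma$ is exactly the scenario at risk. To be fair, the paper's own proof also writes only ``construct a new cubing'' and implicitly assumes the enlarged family satisfies sandwiching without argument; but your proposal explicitly singles out this gap as the main obstacle and then claims a resolution that, as written, does not actually sidestep it. Either a proof that $\sigma_j$ is automatically sandwiched by $\sigma$ under these hypotheses, or a genuinely different DCC argument for the $\Sigma(Y)$-chains, is still needed.
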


\begin{proof}
Let $X$ be a standard $H$-almost-invariant set arising from $\sigma$,
where $X$ is sandwiched by $X_j$
and $i(\sigma, \sigma_j)=0$, for all $1 \leq j \leq n$.

Construct a new cubing from $\Sigma(X_1, \ldots, X_n, X)$ (using the partial order $\leq$).
Since $X$ does not cross any element of $\mycup \limits_{j =1}^n \Sigma(X_j)$,
each new subcubing $\alpha$ in $SUB$ is simply an edge,
and $\bigstar_\alpha$ consists of two $V_1$ vertices connected to a $V_0$-vertex.
Construct the new tree $T(X_1, \ldots, X_n, X)$.
The new tree projects naturally to a tree for $\sigma$,
by collapsing each old $\bigstar_\alpha$ to a point, and forgetting the new $V_0$-vertices.
On the other hand, the new tree naturally projects to the old tree,
with each new $\bigstar_\alpha$ being collapsed to a single $V_1$-vertex.
Hence $\sigma$ is enclosed by a $V_1$-vertex orbit.
\end{proof}

\begin{lem}[Third condition] 
$T$ is a minimal $G$-tree.
\end{lem}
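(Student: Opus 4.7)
The plan is to proceed by contradiction: suppose $S \subsetneq T$ is a proper $G$-invariant subtree, and derive a contradiction. First I would verify that $S$ may be taken nonempty. If $G$ had a global fixed point on $T$, it would fix either a $V_1$-vertex (corresponding to a cut vertex of $L$) or a $V_0$-vertex (corresponding to a subcubing $\alpha$). The transitive action of $G$ on each $\Sigma(X_j)$-hyperplane orbit, combined with the fact that no $\sigma_j$ has a global fixed point on its Bass-Serre tree $T_{\sigma_j}$, rules out either possibility so long as $T$ is nontrivial: a fixed cut vertex would force every $g \in G$ to preserve the side of each hyperplane containing $c$, contradicting the existence of hyperbolic elements for $T_{\sigma_j}$, while a fixed $V_0$-vertex would force a single subcubing to contain all $G$-translates of some $\Sigma(X_j)$-hyperplane, degenerating $T$ to a point.

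Next, for each $j$ I would invoke the refinement $T^{(j)}$ of $T$ that witnesses the enclosure of $\sigma_j$ by some $V_0$-vertex orbit (constructed in the proof of the first condition). Let $S^{(j)}$ denote the preimage of $S$ under the collapse $T^{(j)} \to T$; it is a nonempty $G$-invariant subtree of $T^{(j)}$. Collapsing the non-$\sigma_j$ edges sends $S^{(j)}$ to a nonempty $G$-invariant subtree of $T_{\sigma_j}$, and since $T_{\sigma_j}$ is the Bass-Serre tree of a splitting (with no global fixed point) it is minimal, so this image equals $T_{\sigma_j}$. Because the collapse $T^{(j)} \to T_{\sigma_j}$ restricts to a bijection on $\sigma_j$-edges, $S^{(j)}$ contains every $\sigma_j$-edge of $T^{(j)}$. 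Collapsing these $\sigma_j$-edges back down to points of $T$, it follows that $S$ contains every $V_0$-vertex of $T$ whose associated subcubing $\alpha$ contains a $\Sigma(X_j)$-hyperplane.

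Varying $j$ and noting that every subcubing $\alpha \in SUB$ contains hyperplanes from at least one $\Sigma(X_j)$, I conclude that $S$ contains every $V_0$-vertex of $T$. Since every $V_1$-vertex of $T$ arises from a cut vertex of $L$ (adjacent by definition to at least two distinct subcubings), each $V_1$-vertex has valence at least $2$ in $T$ and lies on the path between two distinct $V_0$-vertices; as $S$ is a subtree containing all $V_0$-vertices, it must contain every such path, and hence every $V_1$-vertex. Thus $S$ contains every vertex of $T$, so $S = T$, contradicting $S \subsetneq T$.

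The main obstacle will be the nonemptiness step---rigorously ruling out that $G$ globally fixes a vertex of $T$, which requires carefully combining the transitivity of $G$ on each $\Sigma(X_j)$ with the sandwiching hypothesis and the absence of global fixed points in each $T_{\sigma_j}$ (and handling the trivially ascending HNN case separately, since there $T_{\sigma_j}$ is a line). A secondary technicality is confirming that the collapse $T^{(j)} \to T_{\sigma_j}$ restricts to a bijection on $\sigma_j$-edges, which follows from the structure of the enclosure refinement used in the proof of the first condition.
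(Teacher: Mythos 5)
Your core argument (steps 2--5 of your outline) is correct, and it takes a genuinely different route from the paper. The paper argues via the sandwiching hypothesis directly: let $T_0$ be the minimal $G$-subtree, rule out $T_0$ consisting of a single $V_1$-vertex by a DCC argument, then observe that if $T_0 \neq T$ there is a $V_0$-vertex $V''$ outside $T_0$ enclosing some $\sigma_k$, and sandwiching of $\sigma_j$ by $\sigma_k$ forces a translate of a $V_0$-vertex $V'\in T_0$ to escape $T_0$, contradicting $G$-invariance. You instead fix an arbitrary nonempty $G$-invariant subtree $S$, pull it back to $S^{(j)}$ in the enclosure refinement $T^{(j)}$, push forward to $T_{\sigma_j}$, and use the fact that a Bass-Serre tree for a splitting is already a minimal $G$-tree (one edge orbit, no global fixed point) to conclude $S^{(j)}$ contains every $\sigma_j$-edge, hence $S$ contains every $V_0$-vertex whose subcubing carries a $\Sigma(X_j)$-hyperplane; varying $j$ and using that $V_1$-vertices have valence at least two gives $S=T$. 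This is cleaner in that it outsources the heavy lifting to the minimality of each $T_{\sigma_j}$, which is automatic, rather than to sandwiching. (Sandwiching is still being used, but only indirectly, through the existence of the refinements $T^{(j)}$ from the first condition.)

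One comment: your first paragraph, where you try to rule out a global fixed point, is not actually needed. When you suppose $S\subsetneq T$ is a nonempty $G$-invariant subtree and run steps 2--5, the conclusion $S=T$ is reached regardless of whether $S$ is a single vertex or larger, so the possibility of a fixed vertex is subsumed by the main argument rather than being a prerequisite for it. In particular, the assertion that a fixed $V_0$-vertex ``degenerat[es] $T$ to a point'' is not justified as stated (a single subcubing containing all of $\Sigma(X_j)$ for one $j$ does not by itself force the cubing to have only one subcubing), but since that paragraph can be deleted without affecting the rest, this is not a gap in the proof, only in an unnecessary preliminary.
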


\begin{proof}
Let $T_0$ be the minimal sub-$G$-tree of $T$ (or any fixed vertex, if $G$ fixes a vertex of $T$).

If $T_0$ has no $V_0$-vertices, then since $T$ is bipartite, $T_0$ must consist of a single $V_1$-vertex which is fixed by $G$.
Let $V$ denote a $V_0$-vertex adjacent to the fixed $V_1$-vertex.
Since the orbit of $V$ encloses  $\sigma_j$ for some $j$,
and since $\Sigma(X_j)$ has infinite chains,
$V$ satisfying DCC implies that
there exists a translate of $V$ not adjacent to the fixed $V_1$-vertex.
This is impossible.
Hence $T_0$ must contain a $V_0$-vertex.

Take any $V_0$-vertex $V'$ in $T_0$,
and pick $j \in J$ such that $\sigma_j$ is enclosed by the orbit of $V'$.
\mbox{If $T_0 \neq T$,} then we can find a $V_0$-vertex $V''$ in $T - T_0$.
Pick $k \in J$ such that $\sigma_k$ is enclosed by the orbit of $V''$.
As $\sigma_j$ is sandwiched by $\sigma_k$,
there exists a translate of $V'$ that is not in $T_0$.
This is impossible, as $T_0$ is $G$-invariant.
Hence we must have $T_0 = T$.
This completes the proof that $T$ is a minimal $G$-tree.
\end{proof}

\begin{lem}[Fourth condition]
There exists a bijection
\[
	f: 
	\{j \in J | \sigma_j \text{ is isolated}\}
	{\rightarrow} G\text{-orbits of isolated } V_0 \text{-vertices of } T
\]
such that $f(j)$ encloses $\sigma_j$.
\end{lem}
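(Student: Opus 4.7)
The plan is to define $f$ directly and then check injectivity and surjectivity using the cross-connected-component structure. The crucial preliminary observation is that $X_j$ does not cross any translate of itself: since $X_j = \{g \in G \mid e \text{ points away from } gv\}$ for an edge $e$ in the Bass-Serre tree $T_{\sigma_j}$, any two edges $e$ and $ge$ partition the vertex set of $T_{\sigma_j}$ into exactly three regions, so one of the four corners of $(X_j, gX_j)$ is already empty. Combined with $\sigma_j$ being isolated, this means $X_j$ crosses no element of $\Sigma = \bigcup_k \Sigma(X_k)$ at all. Consequently, for each $g \in G$, the cross-connected component of $gX_j$ in $\Sigma$ is exactly the pair $\{gX_j, gX_j^*\}$, and the corresponding subcubing $\alpha$ has a single hyperplane and is therefore a single edge with two vertices, both of which are corner vertices in $L$.

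Given this, I define $f(j)$ to be the $G$-orbit of the central $V_0$-vertex of $\bigstar_{\alpha_j}$, where $\alpha_j$ is the subcubing for the ccc $\{X_j, X_j^*\}$. This orbit is well-defined because $G$ acts transitively on the edges of $T_{\sigma_j}$ (and hence on the ccc's $\{gX_j, gX_j^*\}$), so all such central vertices lie in one orbit. The central vertex has valence two in $T$ (since $|CRN(\alpha_j)| = 2$), and its stabilizer equals $\mathrm{Stab}(\{X_j, X_j^*\}) = H_j$, which fixes each of the two corner vertices individually (it cannot swap $X_j$ and $X_j^*$, as this would force $X_j \overset{H_j\text{-}a}{=} X_j^*$, contradicting the $H_j$-infiniteness of $G$). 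Hence the vertex is also valence two in $G \setminus T$, i.e.\ isolated. That $f(j)$ encloses $\sigma_j$ is immediate from the construction: refining $T$ by replacing each central $V_0$-vertex in $f(j)$ with the corresponding edge of $\alpha_j$ produces a $G$-tree whose collapse in one direction recovers $T$ and whose collapse in the other direction yields $T_{\sigma_j}$.

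For injectivity, if $\sigma_j$ and $\sigma_k$ are distinct isolated splittings ($j \neq k$), then since no two of the $\sigma_i$'s are isomorphic, Proposition~\ref{EQUIVALENT} forces $\Sigma(X_j) \cap \Sigma(X_k) = \emptyset$, so the ccc's $\{X_j, X_j^*\}$ and $\{X_k, X_k^*\}$ lie in disjoint $G$-orbits, giving distinct vertex orbits $f(j) \neq f(k)$. For surjectivity, let $V$ be any isolated $V_0$-vertex of $T$, coming from the center of some $\bigstar_\alpha$. Then $|CRN(\alpha)| = 2$. The plan is to show that $\alpha$ is a single edge: once this is established, the unique hyperplane of $\alpha$ has the form $gX_j$ for some $j$, nothing in $\Sigma$ crosses $gX_j$ (equivalently, nothing crosses $X_j$), so $\sigma_j$ is isolated and $V$ lies in $f(j)$.

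The main obstacle is showing that a 2-connected subcubing $\alpha$ (a block of $L$) with exactly two corner vertices must be a single edge. My approach is to argue by contradiction: if $\alpha$ had two or more hyperplanes, all in a common ccc, I would exhibit a third corner vertex. In the case where two hyperplanes of $\alpha$ cross, $\alpha$ contains a square whose four vertices project to at least three cut vertices of $L$ (the fourth can only fail to be a cut vertex if the two hyperplanes are the only hyperplanes passing through that vertex, which quickly forces a further hyperplane in the ccc and more corner vertices). In the case where hyperplanes in $\alpha$ are pairwise non-crossing yet cross-connected via intermediaries, I use sandwiching together with interval finiteness (Proposition~\ref{INTERVALS}) to produce a third corner vertex along the chain. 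This combinatorial step, carried out carefully in the cubing $L$, is the substantive content of the proof; the rest of the argument is bookkeeping around the $\bigstar_\alpha$ construction.
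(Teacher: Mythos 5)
Your forward direction (isolated $\sigma_j$ gives a single-edge subcubing, hence an isolated $V_0$-vertex) and your injectivity check are both correct and go somewhat beyond the paper, which disposes of the whole lemma in a terse two-sentence assertion. One small repair: your justification that $\mathrm{Stab}(\{X_j, X_j^*\}) = H_j$ is not quite right --- if $gX_j = X_j^*$ for some $g$, left multiplication does not respect $H_j$-almost-equality, so you cannot conclude $X_j \overset{H_j\text{-}a}{=} X_j^*$ from this. The correct reason is the no-edge-inversion condition built into the definition of a $G$-tree: $gX_j = X_j^*$ would force $g$ to send the directed edge $e_j$ of $T_{\sigma_j}$ to its reverse, i.e.\ to invert $e_j$.

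The surjectivity direction, however, has a genuine gap, and you flag it yourself as ``the substantive content of the proof'' while leaving it as a sketch. Two specific problems. First, your second case --- ``hyperplanes in $\alpha$ are pairwise non-crossing yet cross-connected via intermediaries'' --- is vacuous: cross-connectedness is the equivalence relation generated by actual crossings, so if $\alpha$'s ccc contains more than one complementary pair, some two of its hyperplanes must genuinely cross. Second, in the remaining case (the square), your argument that ``at least three of the four square-vertices are cut vertices'' conflates having extra hyperplanes through a vertex with being a cut vertex. A vertex $v$ of $L$ is a cut vertex precisely when the crossing graph on the minimal elements of the ultrafilter $v$ is disconnected; a third hyperplane through $v$ only helps if it lies in a \emph{different} ccc from $A$ and $B$, and your parenthetical (``which quickly forces a further hyperplane in the ccc and more corner vertices'') is circular since more hyperplanes in the \emph{same} ccc do not by themselves create more corner vertices. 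What is actually needed is an argument, using sandwiching and the structure of $\Sigma$ as translates of standard almost invariant sets from splittings, that such a square cannot sit inside a block with only two cut vertices. Neither your proposal nor the paper supplies this argument; the paper simply asserts the equivalence ``isolated $V_0$-vertex $\iff$ one-edge subcubing,'' and you attempted, commendably, to prove it but did not close the hard implication.
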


\begin{proof}
Each isolated $V_0$-vertex corresponds to a subcubing $\alpha \in SUB$
consisting of exactly one edge, or equivalently, exactly one hyperplane.
This hyperplane corresponds to a unique pair $\{A, A^*\} \subset \Sigma$.
\end{proof}

\begin{lem}[Fifth condition]
Every non-isolated $V_0$-vertex orbit in $T$ encloses some non-isolated $\sigma_j$.
\end{lem}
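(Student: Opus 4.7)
My plan is to translate the hypothesis that $V$ is a non-isolated $V_0$-vertex into combinatorial data about the associated cross connected component, and then to extract from the chain of crossings inside that $ccc$ an explicit crossing witnessing that some enclosed splitting has positive intersection number with another (or with a translate of itself).

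First I would invoke the characterization used in the proof of the fourth condition: a $V_0$-vertex is isolated precisely when the corresponding subcubing $\alpha \in SUB$ consists of a single edge, equivalently when the corresponding cross connected component $\mathcal{C} \subset \Sigma$ consists of a single complementary pair $\{A, A^*\}$. Thus the hypothesis that $V$ is non-isolated unwinds to the statement that $\mathcal{C}$ contains two distinct pairs $\{A_1, A_1^*\}$ and $\{A_2, A_2^*\}$ with $A_2 \notin \{A_1, A_1^*\}$. By the definition of cross connectedness, I would then produce a chain $A_1 = C_0, C_1, \ldots, C_m = A_2$ with $m \geq 1$ such that consecutive elements cross. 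For any index $0 \leq i < m$, writing $C_i = g_i X_{j_i}^{(*)}$, the fact that $C_i$ crosses $C_{i+1}$ gives $i(\sigma_{j_i}, \sigma_{j_{i+1}}) > 0$, so $\sigma_{j_i}$ is non-isolated. Since a translate of $X_{j_i}$ lies in $\mathcal{C}$, the first condition's lemma guarantees that $V$ encloses $\sigma_{j_i}$, which is the desired conclusion.

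The main obstacle is the case $j_i = j_{i+1}$, where the crossing is between two translates of the same $X_j$ rather than translates of distinct $X_j$ and $X_k$. Here the witness to non-isolation is the positive self-intersection of $\sigma_j$, and the argument relies on the convention (implicit in the statement of the fourth condition's bijection, since otherwise an isolated $\sigma_j$ with a self-crossing translate would correspond to a multi-hyperplane subcubing and the bijection would fail) that a splitting whose standard almost invariant set crosses a nontrivial translate of itself is non-isolated. With this convention the two cases $j_i \neq j_{i+1}$ and $j_i = j_{i+1}$ are treated uniformly, and no appeal to sandwiching is needed for this particular lemma.
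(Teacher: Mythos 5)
Your overall strategy matches the paper's (very brief) proof: the paper simply observes that a non-isolated $V_0$-vertex corresponds to a subcubing in $SUB$ containing at least two crossing hyperplanes, from which it reads off a pair of splittings with positive intersection number. Your first two paragraphs spell this out correctly via the crossing chain in the associated cross connected component and the enclosure established in the proof of the first condition.

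However, your third paragraph is founded on a misunderstanding. The case $j_i = j_{i+1}$ cannot occur. For a standard $H_j$-almost invariant set $X_j$ arising from a splitting $\sigma_j$, the elements of $\Sigma(X_j)$ are in $G$-equivariant bijection with the directed edges of the Bass-Serre tree for $\sigma_j$, and any two distinct (undirected) edges of a tree separate it into exactly three pieces, so one corner of the pair $\bigl(gX_j^{(*)}, g'X_j^{(*)}\bigr)$ is literally empty. Thus distinct translates of $X_j$ are always \emph{nested}, never crossing, and the self-intersection number of any splitting is zero. Consequently consecutive elements $C_i$, $C_{i+1}$ of your crossing chain are forced to come from distinct indices $j_i \neq j_{i+1}$, and the witness $i(\sigma_{j_i}, \sigma_{j_{i+1}}) > 0$ is always between genuinely different splittings. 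There is no ``convention'' that a splitting which crosses a translate of itself is non-isolated, because no such splitting exists; the example of positive self-intersection number in the paper (Example~\ref{INFSELFINTER}) is explicitly for a $\{1\}$-almost invariant set that does \emph{not} come from a splitting. Drop the third paragraph and replace it with the observation that translates of a single $X_j$ are pairwise nested; the remainder of your argument is sound and is essentially the paper's.
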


\begin{proof}
Any non-isolated $V_0$-vertex corresponds to a subcubing $\alpha \in SUB$
containing at least two hyperplanes that cross each other.
\end{proof}

This completes the proof of Theorem~\ref{thmARNE}.

\section{Uniqueness of Algebraic Regular Neighborhoods}
\label{AlgebraicRegularNeighborhoodsU}

We prove uniqueness of algebraic regular neighborhoods for an arbitrary collection of splittings of $G$ satisfying sandwiching.

\begin{thm}[Uniqueness of algebraic regular neighborhoods]
\label{UNIQUENESSOFNBHD}
Let $G$ be any group with any collection $\{\sigma_j | j \in J\}$ of pairwise non-isomorphic splittings.
Suppose $\{\sigma_j | j \in J\}$ satisfies sandwiching,
and that $T_1$ and $T_2$ are algebraic regular neighborhoods of $\{ \sigma_j | j \in J\}$.
Then there exists a $G$-equivariant, color preserving isomorphism from $T_1$ to $T_2$.
\end{thm}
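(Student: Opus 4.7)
The plan is to exhibit a canonical $G$-equivariant, color-preserving isomorphism from any algebraic regular neighborhood to the one $T = T(X_1,\ldots,X_n)$ constructed in Section~\ref{AlgebraicRegularNeighborhoodsE}, thereby producing $T_1 \cong T \cong T_2$. First I would reduce to $J$ finite by exhaustion: for any finite $J_0 \subseteq J$, the minimal sub-$G$-tree of $T_i$ forced by enclosure of $\{\sigma_j \mid j \in J_0\}$ and condition~2 is itself an algebraic regular neighborhood of $\{\sigma_j \mid j \in J_0\}$, and matching these across an increasing chain of $J_0$'s assembles the full isomorphism in the limit. Minimality (condition~3) prevents spurious structure at infinity from interfering.

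Having fixed $J$ finite, the first serious step is matching $V_0$-vertex orbits. For each $j$, the orbit $\mathcal{O}_i(j)$ of $T_i$ enclosing $\sigma_j$ is unique: two such orbits would give two refinements of $T_i$ collapsing onto $T_{\sigma_j}$, contradicting uniqueness of compatibility trees (Corollary~\ref{UNIQUENESSTREE}). Define an equivalence relation on $J$ by $j \sim_i k \iff \mathcal{O}_i(j)=\mathcal{O}_i(k)$. The central claim is that $\sim_i$ is intrinsic, namely $j \sim_i k$ iff $X_j$ and $X_k$ share a cross-connected component of $\Sigma$. For the forward implication, if $\mathcal{O}_i(j) \neq \mathcal{O}_i(k)$ then a $V_1$-vertex separates them in $T_i$, and collapsing appropriately produces a common refinement of edge-splittings with $\sigma_j, \sigma_k$, which forces the $X$'s not to cross; iterating blocks cross-connectedness entirely. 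For the reverse, non-cross-connectedness lets one extract, via the construction in Theorem~\ref{thmARNE}, a candidate splitting $\tau$ separating the ccc's of $\sigma_j$ and $\sigma_k$; one checks $\tau$ is sandwiched by and has zero intersection number with every $\sigma_\ell$, so condition~2 places $\tau$ at a $V_1$-orbit of $T_i$ between $\mathcal{O}_i(j)$ and $\mathcal{O}_i(k)$. Conditions~4 and~5 then upgrade this to a $G$-equivariant bijection on all $V_0$-orbits.

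With $V_0$-orbits matched, I would match edges (and hence $V_1$-orbits) through adjacency. Each edge $e$ of $T_i$ yields an edge splitting $\tau_e$ obtained by collapsing all other edge orbits; since $T_i$ can be further refined to realize enclosure of any $\sigma_j$, the trees $T_{\tau_e}$ and $T_{\sigma_j}$ share a common refinement, so $i(\tau_e,\sigma_j)=0$ and $\tau_e$ is sandwiched by each $\sigma_j$. The collection $\{\sigma_j\} \cup \{\tau_e\}$ thus admits $T_i$ as compatibility tree. Doing the same for $T$ and using the $V_0$-matching already established to identify the edge splittings of $T_i$ and $T$ up to $G$-action, Corollary~\ref{UNIQUENESSTREE} applied to the common compatibility tree delivers a $G$-equivariant isomorphism $T_i \cong T$, which is automatically color-preserving since the bipartite coloring is determined by the enclosure/refinement pattern.

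The main obstacle is the reverse direction in the intrinsic characterization of $\sim_i$: manufacturing from the abstract hypothesis of non-cross-connectedness a concrete witness splitting $\tau$ that condition~2 will enclose at a $V_1$-vertex orbit of $T_i$. The natural candidate is an edge splitting of $T(X_1,\ldots,X_n)$ crossing from one $V_0$-orbit to the next along an intervening $V_1$-vertex, and the subtle point is to verify the sandwiching and zero intersection number hypotheses of condition~2 for this $\tau$ without circularity. Minimality (condition~3) and Corollary~\ref{UNIQUENESSTREE} are the recurring workhorses ruling out extra unmatched orbits, while the reduction to finite $J$ and the enclosure-driven limit argument are what allow the proof to proceed without any finite generation assumption on $G$ or the edge subgroups.
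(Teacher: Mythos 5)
Your approach diverges from the paper's in a fundamental way, and the divergence introduces gaps that are not repairable with the tools at hand. The paper never routes through the constructed model $T(X_1,\ldots,X_n)$ or through cross-connected components; it compares $T_1$ and $T_2$ directly by noting that every edge splitting of $T_1$ has zero intersection number with each $\sigma_j$ and is sandwiched by each $\sigma_j$ (Lemma~\ref{EDGESPLITTING}), so condition~2 lets one insert it into $T_2$ at a $V_1$-vertex. Doing this for all such splittings simultaneously (Lemma~\ref{JOINTLYCOMPATIBLE}) produces $T_{12}$ and $T_{21}$, which are isomorphic by uniqueness of compatibility trees (Corollary~\ref{UNIQUENESSTREE}); if any insertion actually occurred, the isomorphism would force some $\sigma_k$ to be isolated, contradiction. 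Since this argument never invokes existence of an algebraic regular neighborhood, it works for infinite $J$ without any reduction.

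The concrete gaps in your plan: first, the reduction to finite $J$ is unsound. For $J_0 \subsetneq J$, condition~2 for $J_0$ is a \emph{strictly stronger} requirement than condition~2 for $J$ --- fewer $\sigma_j$'s to check against means more candidate splittings $\sigma$ must be enclosed at $V_1$-vertices --- and condition~1 for $J_0$ may also fail, since a $V_0$-orbit of the subtree can enclose only $\sigma_j$'s with $j \notin J_0$. So the minimal sub-$G$-tree you describe is not in general an algebraic regular neighborhood of $\{\sigma_j \mid j \in J_0\}$, and passing to the limit over $J_0$ is not justified. Second, your ``two orbits would contradict uniqueness of compatibility trees'' step is too quick: two refinements of $T_i$ each collapsing to $T_{\sigma_j}$ and to $T_i$ are just two compatibility trees for $\{\sigma_j\} \cup \{\text{edge splittings of } T_i\}$, and Corollary~\ref{UNIQUENESSTREE} gives only a $G$-isomorphism between them, not a contradiction --- you would need to argue the isomorphism must identify the inserted edge orbits. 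Third, the obstacle you yourself flag --- producing a witness splitting $\tau$ certifying that two $V_0$-orbits are genuinely distinct when the corresponding $X_j$'s are not cross-connected, and verifying the sandwiching and $i(\tau,\sigma_\ell)=0$ hypotheses of condition~2 for that $\tau$ --- is not a detail but the heart of your argument, and the circularity risk you identify is real: the natural candidate $\tau$ is an edge splitting of $T(X_1,\ldots,X_n)$, but that $T$ only exists for finite $J$ and its properties are exactly what you would be trying to establish. The paper's insertion-and-contradiction strategy sidesteps all three issues.
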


The proof of Theorem~\ref{UNIQUENESSOFNBHD} is laid out in this section.
We will use the same strategy Scott and Swarup used to prove Theorem 6.7 of \cite{ScottSwarup2003}.
Namely, insert an edge orbit in $T_1$ for each edge splitting of $T_2$ that is not already an edge splitting of $T$,
and vice versa.
Then we will show a contradiction if we actually had to insert any edge orbits.
To ``insert edge orbits'' in $T_1$ or $T_2$,
we need to know that the edge splittings in $T_1$ and $T_2$ are compatible with the edge splittings to be inserted.

\begin{lem}
\label{JOINTLYCOMPATIBLE}
Suppose $\{\sigma_k | k \in K\}$ and $\{\sigma_l | l \in L\}$ are collections of splittings of $G$,
such that their union satisfies sandwiching.
Assume $\{\sigma_k | k \in K\}$ and $\{\sigma_l | l \in L\}$ are each compatible,
and that $i(\sigma_k, \sigma_l) = 0$ for all $k \in K$ and $l \in L$.
Then $\{\sigma_k | k \in K \cup L\}$ is compatible.
\end{lem}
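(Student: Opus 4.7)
The plan is to apply Dunwoody's theorem directly to the combined almost inclusion pocset. For each $j \in K \cup L$ I will choose a standard almost invariant set $X_j$ arising from $\sigma_j$ (choosing representatives that match the existing compatibility trees $T_K$ and $T_L$ via Lemma~\ref{BASICEQUIV}), and set $\Sigma := \bigcup_{j \in K \cup L} \Sigma(X_j)$. First assume no two of the $\sigma_j$'s are isomorphic; the general case reduces to this by the discard-and-subdivide trick used at the end of the proof of Theorem~\ref{COMPATIBLE}. Under this assumption, Corollary~\ref{PARTIALORDER} supplies the almost inclusion partial order $\leq$ on $\Sigma$, and the task becomes verifying Dunwoody's four hypotheses for $(\Sigma, \leq)$. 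Once verified, Dunwoody's theorem produces a $G$-tree $T$ with directed edge set $\Sigma$, one undirected edge orbit per $j$, such that collapsing all other edge orbits recovers the tree for $\sigma_j$; this is the desired compatibility tree.

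Three of Dunwoody's four hypotheses are essentially free from the pocset structure: complement-reversal (condition~1) and the forbidden simultaneity of $A \leq B$ with $A \leq B^*$ (condition~4) are built into the definition of $\leq$; and the ``some $A^{(*)} \leq B^{(*)}$ holds'' condition (condition~3) follows from $i(\sigma_j, \sigma_{j'}) = 0$ for all $j, j' \in K \cup L$ --- this holds across $K$ and $L$ by hypothesis, and within $K$ (resp.~within $L$) because any two edge orbits of the given compatibility tree $T_K$ (resp.~$T_L$) are automatically splittings of intersection number zero. The only serious obstacle is interval finiteness (condition~2), and this is precisely where Proposition~\ref{INTERVALS} cannot be invoked directly, since $K$ or $L$ may be infinite and its proof used finiteness of the splitting collection to sum finitely many per-$j$ bounds.

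For interval finiteness, fix $A \leq B$ in $\Sigma$; I claim separately that the $K$-part $[A,B]_\leq \cap \bigcup_{k \in K} \Sigma(X_k)$ and the $L$-part $[A,B]_\leq \cap \bigcup_{l \in L} \Sigma(X_l)$ are each finite, which will finish the proof. The $K$-part is handled by a single-index sandwiching trick: choose any one $k_0 \in K$, and by the hypothesis that $\{\sigma_j : j \in K \cup L\}$ satisfies sandwiching pick $A^{(k_0)}, B^{(k_0)} \in \Sigma(X_{k_0})$ with $A^{(k_0)} \leq A$ and $B \leq B^{(k_0)}$. Any $C \in \bigcup_{k \in K} \Sigma(X_k)$ with $A \leq C \leq B$ then satisfies $A^{(k_0)} \leq C \leq B^{(k_0)}$, so $C$ lies on the finite simple edge-path from $A^{(k_0)}$ to $B^{(k_0)}$ in $T_K$. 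Here I am using that $\bigcup_{k \in K} \Sigma(X_k)$ is canonically identified with the directed edge set of $T_K$ endowed with its tree-path order, which matches the restriction of $\leq$: this identification comes from Dunwoody's theorem applied to the $K$-collection alone, whose output is the same as $T_K$ by Corollary~\ref{UNIQUENESSTREE}. The $L$-part is symmetric. Assembling everything: the four Dunwoody conditions hold, so $T$ exists, and the isomorphic-splittings case is recovered by subdividing each edge orbit into an interval of the appropriate multiplicity.
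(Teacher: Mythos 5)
Your proposal is correct and follows essentially the same strategy as the paper: reduce to verifying Dunwoody's four conditions, with interval finiteness as the only serious one, and establish that by combining sandwiching across the $K$/$L$ divide with finiteness of tree paths inside the given compatibility trees $T_K$ and $T_L$. Your treatment of interval finiteness is in fact marginally more careful than the paper's: by bounding the $K$-part and the $L$-part of $[A,B]$ separately regardless of where $A$ and $B$ live (fixing a single $k_0 \in K$, resp.\ $l_0 \in L$, and sandwiching $A$ from below and $B$ from above by elements of $\Sigma(X_{k_0})$, resp.\ $\Sigma(X_{l_0})$), you uniformly cover the case that the paper's case split does not spell out --- namely, when $A$ and $B$ are both on the same side, the paper invokes compatibility to bound only that side's contribution to $[A,B]$ and leaves the other side's contribution implicit, though the very same sandwiching argument handles it.
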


\begin{proof}
First we pick a standard almost invariant set for each isomorphism class of splittings.
For each $k \in K \cup L$,
let $X_\alpha$ be a standard almost-invariant set arising from $\sigma_k$.
Without loss of generality, if $\sigma_k$ is isomorphic to $\sigma_l$,
then $X_k = X_l$ (as subsets of $G$).
Let $\Sigma$ denote the collection of all translates of the standard almost invariant sets and their complements:
\[
	\Sigma := \mycup \limits_{k \in K \cup L} \Sigma(X_k).
\]

The proof of Theorem~\ref{COMPATIBLE} directly carries through,
provided we can prove interval finiteness.
We need to show that for all $A, B \in \Sigma$, there are only finitely many $C \in \Sigma$ with $A \leq C \leq B$.

If $A$ and $B$ both are in $\mycup \limits_{k \in K } \Sigma(X_k)$
or both are in $\mycup \limits_{l \in L} \Sigma(X_l)$
then we're done, since $\{\sigma_k | k \in K\}$ and $\{\sigma_l | l \in L\}$ are each compatible.

If $A \in \Sigma(X_k)$ for some $k \in K$, and $B \in \Sigma(X_l)$ for some $l \in L$,
then  by the sandwiching assumption, we can find $B' \in \Sigma(X_l)$
such that $B' \leq A$, and $A' \in \Sigma(X_k)$ such that $B \leq A'$.
There are only finitely many $C \in \mycup \limits_{k \in K } \Sigma(X_k)$ with $A \leq C \leq A'$, and only finitely many $C \in \mycup \limits_{l \in L } \Sigma(X_l)$ with $B' \leq C \leq B$, and so there are only finitely many $C \in \Sigma$ with $A \leq C \leq B$.

If $B$ comes from a splitting $\sigma_k$ for some $k \in K$ and
$A$ comes from a splitting $\sigma_l$ for some $l \in L$,
a similar argument shows there are only finitely many $C \in \Sigma$ with $A \leq C \leq B$.

Apply Dunwoody's theorem to get a tree (see Section~\ref{SUBdunwoody}).
For each edge orbit $Ge$, let $n_e$ denote
the number of splittings in $\{\sigma_k | k \in K \cup L\}$
that are isomorphic to the edge splitting for $e$,
and subdivide each edge in $Ge$ into an interval of $n_e$ edges.
\end{proof}

To apply the above lemma, we need to know that each splitting of an algebraic regular neighborhood
is sandwiched by each $\sigma_j$.

\begin{lem}
\label{EDGESPLITTING}
Let $G$ be any group with any collection $\{\sigma_j | j \in J\}$ of pairwise non-isomorphic splittings.
Suppose $\{\sigma_j | j \in J\}$ satisfies sandwiching,
and let $T$ be an algebraic regular neighborhood of $\{\sigma_j | j \in J \}$.
Every edge splitting of $T$ is sandwiched by $\{\sigma_j | j \in J \}$.
\end{lem}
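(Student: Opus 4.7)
The plan is to realize the edge $e$ as a non-$\sigma_j$-edge of an appropriate refinement $T'_j$ of $T$ that also contains $\sigma_j$-edges, and then read off sandwiching from the geometry of $T'_j$.

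Fix an edge $e$ of $T$ with edge splitting $\sigma$ over $H := Stab(e)$, let $Y$ be a standard $H$-almost invariant set arising from $\sigma$, and fix $j \in J$. By condition 1 of Definition~\ref{defnARN}, some $V_0$-vertex orbit $\mathcal{O}_j$ of $T$ encloses $\sigma_j$. Unpacking the definition of ``encloses'' yields a $G$-tree $T'_j$ in which $e$ persists as a non-$\sigma_j$-edge and which carries a distinguished $G$-orbit of ``$\sigma_j$-edges'' such that collapsing the $\sigma_j$-edges returns $T$, while collapsing the non-$\sigma_j$-edges returns a $G$-tree for $\sigma_j$. Pick a base vertex $w \in T'_j$ and a $\sigma_j$-edge $e_j$, and set
\[
Y' := \{g \in G : e \text{ points away from } gw\}, \qquad X'_j := \{g \in G : e_j \text{ points away from } gw\}.
\]
By Lemma~\ref{BASICEQUIV} (applied to the $1$-dimensional cubing $T'_j$), $Y' \overset{H-a}{=} Y$ and $X'_j \overset{H_j-a}{=} X_j$, so it suffices to show that $Y'$ is sandwiched by $X'_j$.

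Since $Y'$ and each translate $gX'_j$ both come from the single tree $T'_j$, removing the two edges $e$ and $ge_j$ from $T'_j$ leaves only three components, so one of the four corners of $(Y', gX'_j)$ is empty and hence $Y'$ and $gX'_j$ are always nested rather than crossing. Therefore $i(\sigma, \sigma_j)=0$ and the ``crosses every element'' branch of Definition~\ref{DEFNSANDWICHING} is vacuous, so the goal reduces to exhibiting $A, B \in \Sigma(X'_j)$ with $A \subset Y' \subset B$. Translated into the tree, this amounts to producing an oriented $\sigma_j$-edge in the tail-side component of $T'_j \setminus e$ whose head points toward $e$, together with an oriented $\sigma_j$-edge in the head-side component whose tail points toward $e$; because both orientations of an unoriented edge are available within $\Sigma(X'_j)$, it is enough to produce one $\sigma_j$-edge in each of the two components of $T'_j \setminus e$.

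The substantive step, and the point I expect to be the main obstacle (though a short one), is this last existence claim. I would deduce it from minimality of $T$. Because $T$ has the edge $e$ it is not a point, so by minimality $T$ has no global $G$-fixed vertex; hence $\mathcal{O}_j$ has at least two elements, its convex hull is a non-degenerate $G$-invariant subtree of $T$, and minimality forces this subtree to equal $T$. So $e$ lies on a geodesic between two vertices of $\mathcal{O}_j$, placing a vertex of $\mathcal{O}_j$ in each component of $T \setminus e$. Each such vertex $V$ is replaced in $T'_j$ by a connected $\sigma_j$-subtree $S_V$ which must contain at least one $\sigma_j$-edge: otherwise no refinement occurs at $V$, and by $G$-equivariance no refinement occurs anywhere, leaving $T'_j$ with no $\sigma_j$-edges at all and contradicting that collapsing the non-$\sigma_j$-edges of $T'_j$ yields a non-trivial $\sigma_j$-tree. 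Since $S_V$ is connected and lies in the component of $T'_j \setminus e$ corresponding to $V$, its $\sigma_j$-edges supply the desired $A$ and $B$ after orientation, completing the proof.
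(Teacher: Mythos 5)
Your proof is correct and uses essentially the same mechanism as the paper: invoke minimality of $T$ to place translates of the enclosing $V_0$-vertex on both sides of $e$, and conclude sandwiching. The difference is one of exposition rather than strategy — the paper asserts (and leaves implicit) the equivalence between ``$\sigma$ is sandwiched by $\sigma_j$'' and ``the orbit of the enclosing $V_0$-vertex meets both sides of $e$,'' whereas you prove it carefully by passing to the refinement $T'_j$, observing $i(\sigma,\sigma_j)=0$ there to dispose of the ``crosses everything'' alternative, and extracting a $\sigma_j$-edge from the blown-up subtree on each side.
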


\begin{proof}
Assume there exists some edge $e$ of $T$
and some $j \in J$
such that splitting from $e$
is not sandwiched by $\sigma_j$.
Let $\sigma$ denote the splitting from $e$.

Let $V$ be some $V_0$-vertex of $T$ whose orbit encloses $\sigma_j$.
The convex hull of all translates of $V$ is a $G$-invariant subtree of $T$.
The assumption that $\sigma$ is not sandwiched by $\sigma_j$
implies that all translates of $V$ lie on one side of $e$,
so that $e$ is not in the convex hull of all translates of $V$.
This implies that $T$ is not a minimal $G$-tree, a contradiction to $T$ being an algebraic regular neighborhood.
\end{proof}

Now we present the proof of Theorem~\ref{UNIQUENESSOFNBHD}.

\begin{proof}
First we prove the theorem in the case that no $\sigma_j$ is an isolated splitting,
i.e. no $\sigma_j$ has intersection number zero with every other splitting in the collection.
We will show that the edge splittings of $T_1$ and $T_2$ are isomorphic.
Then by a uniqueness result (see Corollary~\ref{UNIQUENESSTREE}),
$T_1$ and $T_2$ are $G$-isomorphic.
Assume (for contradiction) that $T_1$ and $T_2$ have different edge splittings.
By ``different edge splittings,'' we mean that $T_1$ (or $T_2$)
has an edge splitting not isomorphic to any edge splitting in $T_2$ (or $T_1$),
or that $T_1$ (or $T_2$) has strictly more edge orbits than $T_2$ (or $T_1$)
yielding splittings in a given isomorphism class.

Let $\sigma$ be some edge splitting of $T_1$, call it $\sigma$, that is not in $T_2$ (in the sense described above).
As each $\sigma_j$ is enclosed by some $V_0$-vertex of $T_1$,
each $\sigma_j$ has intersection number zero with $\sigma$.
Moreover, by Lemma~\ref{EDGESPLITTING}, $\sigma$ is sandwiched by each $\Sigma(X_j)$.
By condition number $2$ of the definition of algebraic regular neighborhood,
$\sigma$ is enclosed by some $V_1$-vertex of $T_2$,
so that we can refine $T_2$ by adding one edge orbit which represents $\sigma$.

By Lemma~\ref{JOINTLYCOMPATIBLE}, we can apply the above procedure simultaneously
for all edge splittings of $T_1$ that are not in $T_2$.
Let the tree $T_{21}$ denote a tree obtained from $T_2$
by splitting at $V_1$-vertices for each edge splitting of $T_1$ that was not already in $T_2$.
When splitting at a $V_1$-vertex, color both endpoints of the new edge as $V_1$-vertices.
Define $T_{12}$ similarly.
$T_{12}$ and $T_{21}$ may have infinitely many edge orbits, but by Corollary~\ref{UNIQUENESSTREE},
$T_{12}$ and $T_{21}$ are isomorphic $G$-trees.

If $T_{21}$ has an edge $e$ not in $T_2$, then when adding in $e$,
we would have split $T_2$ at a $V_1$-vertex.
Under the isomorphism from $T_{21}$ to $T_{12}$, the edge $e$ must map to an original edge of $T_1$,
so the isomorphism must identify a $V_1$-vertex of $T_{21}$ with an original $V_0$-vertex of $T_1$.
Pick some splitting $\sigma_k$ enclosed by that $V_0$-vertex.
By the isomorphism, $\sigma_k$ is enclosed by a $V_1$-vertex of $T_2$,
and hence has intersection number zero with every splitting in $\{\sigma_j | j \in J \}$,
so that $\sigma_k$ is an isolated splitting.
This contradicts the assumption that none of the splittings in $\{\sigma_j | j \in J \}$ are isolated.
Hence no edges were added to $T_2$, i.e. $T_{21} = T_2$.

A similar argument shows that $T_{12} = T_1$.
Hence the isomorphism between $T_{21}$ to $T_{12}$ is actually an isomorphism between $T_2$ and $T_1$.
If the isomorphism did not preserve color, then as in the above paragraph,
the isomorphism would identify a $V_0$-vertex of one tree with a $V_1$ vertex of another,
and hence one of the $\sigma_j$'s would be isolated.
This completes the proof of uniqueness of algebraic regular neighborhoods,
in the case where no $\sigma_j$ is isolated.

Second, we prove the theorem in the case where $\{\sigma_j | j \in J \}$ has some isolated splittings.
For each $V_0$-vertex in an orbit corresponding to an isolated $\sigma_j$ in the definition of algebraic regular neighborhood, forget the vertex.
This leaves an edge bounded by two $V_1$-vertices and yielding a splitting isomorphic to $\sigma_j$.
Let $T_1'$ denote the resulting tree. Define $T_2'$ similarly.

If all the $\sigma_j$'s are isolated,
then no $V_0$-vertices remain,
so $T_1'$ and $T_2'$ are compatibility trees for $\{\sigma_j | j \in J\}$.
Then Corollary~\ref{UNIQUENESSTREE} proves that $T_1'$ and $T_2'$ are $G$-isomorphic.

If not all of the $\sigma_j$'s are isolated, consider each edge splitting edge splitting in $T'_1$ that is not in $T'_2$.
Without loss of generality, we can take each such edge orbit to consist of edges where one endpoint is $V_0$ and the other is $V_1$
(as opposed to edges bounded by two $V_1$-vertices, resulting from a forgotten $V_0$-vertex).
Now apply the above procedure to $T_2'$ and $T_1'$ to obtain $T_{21}'$ and $T_{12}'$,
and an isomorphism from $T_{21}'$ to $T_{12}'$.
If $T_{21}' \neq T_2'$ or $T_{12}' \neq T_1'$, then the isomorphism from $T_{21}'$ to $T_{12}'$
must identify a $V_0$ and a $V_1$-vertex.
This is impossible, as the non-forgotten $V_0$-vertices are not isolated.
Similarly, the isomorphism from $T_{21}'$ to $T_{12}'$ must preserve color.
Hence we get a $G$-equivariant, color preserving isomorphism from $T_2'$ to $T_1'$.

To get an isomorphism from $T_2$ to $T_1$, add a $V_0$-vertex in the middle of every edge
bounded by two $V_1$-vertices.
This completes the proof of uniqueness of algebraic regular neighborhoods.
\end{proof}

\section{Mixed Almost Invariant Sets when G is Finitely Generated}
\label{VGPforGFG}

The cubing construction outlined in Section~\ref{VeryGoodPosition}
can also be applied to a finite collection of \mbox{$H_i$-almost} invariant subsets of $G$ together with a finite collection of standard $K_j$-almost invariant sets arising from splittings of $G$, provided $G$ and all the $H_i$ are finitely generated, and the combined family of almost-invariant sets satisfy sandwiching.
This can be used to put a ``mixed'' family of almost invariant sets,
where some come from splittings
and others have finitely generated stabilizers,
in very good position.

\begin{thm}
\label{MIXING}
Let $G$ be a finitely generated group with any finite collection $\{X_i | i = 1, \ldots, m\}$ of $H_i$-almost invariant subsets,
where each $H_i$ is finitely generated, each $X_i$ is nontrivial,
and $\mycup \limits_{i =1}^m \Sigma(X_i)$ is in very good position.
Let $\{\sigma_j | j =1, \ldots, n\}$ be any finite collection of pairwise non-isomorphic splittings of $G$.
For each $j$, let $Y_j$ be a $K_j$-almost invariant set arising from $\sigma_j$,
where each $K_j$ is not finitely generated.
Assume that $\{X_1, \ldots, X_m, Y_1, \ldots, Y_n\}$ satisfies sandwiching.
Let $\Sigma := \mycup \limits_{i =1}^m \Sigma(X_i) \cup \mycup \limits_{j =1}^n \Sigma(Y_j)$.
Then we can put $\{X_1, \ldots, X_m, Y_1, \ldots, Y_n\}$ in very good position.
\end{thm}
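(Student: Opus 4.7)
The plan is to follow the cubing strategy of Section~\ref{VeryGoodPosition}: build a $CAT(0)$ cubical complex $L$ from $(\Sigma, \leq)$, pick any vertex $w \in L$, and set $X'_i := \{g \in G : gw \in \mathcal{H}_{X_i}^+\}$ and $Y'_j := \{g \in G : gw \in \mathcal{H}_{Y_j}^+\}$. Lemma~\ref{BASICEQUIV} will then give $X'_i \overset{H_i-a}{=} X_i$ and $Y'_j \overset{K_j-a}{=} Y_j$, and the closing argument of Section~\ref{VeryGoodPosition} (that $A' \subset B'$ in $\Sigma'$ is equivalent to $A \leq B$ in $\Sigma$) will give $\Sigma'$ in very good position. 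All the real work lies in setting up $\leq$ on $\Sigma$ and proving $L$ is nonempty.

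First I would check that $\Sigma$ is in good position, so that the proof of Corollary~\ref{PARTIALORDER} promotes $\leq$ to a partial order. The two sub-families $\bigcup_i \Sigma(X_i)$ and $\bigcup_j \Sigma(Y_j)$ are individually in good position---the first by hypothesis, the second by Corollary~\ref{GOODPOSITION}. For a mixed pair $(A,B)$ with $A \in \Sigma(X_i)$, $B \in \Sigma(Y_j)$, symmetry of crossing (Proposition~\ref{SYMMETRIC}, since $B$ arises from a splitting) lets us meaningfully ask whether two corners are small; if they were, then after possibly replacing $A$ by $A^*$ one gets $A \overset{H_i-a}{=} B$, and Lemma~\ref{COMMENSURABLE} forces $H_i$ and $K_j$ commensurable. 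Since $H_i$ is finitely generated, $H_i \cap K_j$ is finitely generated (finite index in $H_i$), and hence so is $K_j$ (finite index containing a finitely generated subgroup), contradicting the hypothesis. So mixed pairs never have two small corners and $\leq$ is a well-defined partial order on $\Sigma$.

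Next I would construct $L$ as in Section~\ref{VeryGoodPosition}, with vertex set consisting of all ultrafilters on $(\Sigma, \leq)$ satisfying DCC. The main obstacle is showing $L$ is nonempty, which I would attack by hybridizing the basic-ultrafilter approach of Lemma~\ref{BASIC} on the $X_i$ side with the closure/extension procedure of Theorem~\ref{ULTRAFILTER} on the $Y_j$ side. Fix $g \in G$ and start with
\[
V_0 := \{A \in \textstyle\bigcup_i \Sigma(X_i) : g \in A\}.
\]
Because the $X_i$'s are in very good position, $\leq$ agrees with $\subset$ on $\bigcup_i \Sigma(X_i)$, so $V_0$ is an ultrafilter on this sub-pocset, and Lemma~\ref{BASIC} gives it DCC. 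Then take $\overline{V_0}$ in $\Sigma$ via Lemma~\ref{CLOSURE} and inductively extend by basic parts of $\Sigma(Y_1), \ldots, \Sigma(Y_n)$ with closures interleaved, exactly as in the proof of Theorem~\ref{ULTRAFILTER}, producing an ultrafilter $V$ on $(\Sigma, \leq)$.

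The hard part is verifying DCC for $V$, since a descending chain can mix elements added in the initial $X_i$-phase with elements added through closure or the $Y_j$-extension phase. Given $B_1 \geq B_2 \geq \cdots$ in $V$, first pass to a subchain living inside a single $\Sigma(X_i)$ or a single $\Sigma(Y_j)$. A chain in some $\Sigma(X_i)$ lies in $V_0$ and stabilizes by DCC of $V_0$. A chain in some $\Sigma(Y_j)$ splits into two sub-cases: a subchain added during the $Y_j$ basic-extension step has $g$ in every member and stabilizes by the argument of Lemma~\ref{BASIC}; a subchain added by closure has each $B_k$ above some $A_k \in V_0$, and by replacing each $A_k$ with a smaller element that was added in a basic step (so $g \in A_k$) and passing to a further subchain (using that the $A_k$ come from only finitely many $\Sigma(X_i)$, as in steps 5--7 of the proof of Theorem~\ref{ULTRAFILTER}), we may take all $A_k$ equal to a fixed $A$; then $A \leq B_k \leq B_1$ for infinitely many distinct $B_k$ contradicts interval finiteness (Proposition~\ref{INTERVALS}), which is available precisely because the combined family satisfies sandwiching. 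Sandwiching thus does the same heavy lifting it did in Theorem~\ref{ULTRAFILTER}, compensating for the failure of finite generation on the $K_j$ side.
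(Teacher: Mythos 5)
Your proposal follows essentially the same route as the paper: establish that $\leq$ is a partial order on $\Sigma$, build the cubing $L$, exhibit a nonempty vertex set via the hybrid (basic-on-$\mycup_i\Sigma(X_i)$ then close-and-extend) ultrafilter construction, verify DCC, and then close with the $(X_j)_w$ argument from Section~\ref{VeryGoodPosition}. The overall structure, the reduction of interval finiteness to the $\Sigma(Y_j)$ case via sandwiching, and the hybrid DCC verification all match the paper's intent.

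There is one genuine gap in your good-position step. To define ``small'' unambiguously for a mixed pair $(A,B)$ with $A \in \Sigma(X_i)$, $B \in \Sigma(Y_j)$, you need both implications: $Stab(A)$-finite implies $Stab(B)$-finite, and conversely. Proposition~\ref{SYMMETRIC} (through Lemma~\ref{HANDKFINITE}) only gives you the direction in which the almost invariant set that \emph{arises from a splitting} plays the role of $Y$; since $X_i$ need not come from a splitting, the converse direction does not follow from that proposition. The paper closes this by explicitly invoking the hypothesis that $G$ is finitely generated, citing Lemma 2.3 of \cite{Scott1998}, which is exactly the Cayley-graph coboundary argument that is available here even though it was unavailable in the rest of the paper. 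You should do the same. This is a mild omission since the hypothesis is sitting there unused in your argument, but as written the symmetry claim is not justified.

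Two smaller points. First, you invoke Proposition~\ref{INTERVALS} for interval finiteness, but that proposition is stated for collections of splittings; in the mixed setting the paper proves a separate interval-finiteness lemma, using Lemma 1.15 of \cite{ScottSwarup2000} for the $\Sigma(X_i)$ side. Your DCC argument in fact only needs finiteness of the interval within a single $\Sigma(Y_j)$ (after sandwiching $A_1$ and $B_1$ between translates of $Y_j$), which is automatic for splittings, so this is more an imprecise citation than an error---but if you need interval finiteness on the full $\Sigma$ for anything else in the cubing construction, you must handle the $\Sigma(X_i)$ intervals as the paper does. Second, the paper flags a modification to the connectedness argument (Lemma~\ref{CONNECTED}), since the step ``pass to a subsequence coming from a single splitting, hence nested'' breaks for elements of $\Sigma(X_i)$; you do not mention this. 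Your particular choice of starting ultrafilter $V_0$ as a basic ultrafilter on $\mycup_i\Sigma(X_i)$ means that your constructed $w$ agrees with the basic vertex $V_e$ on all of $\mycup_i\Sigma(X_i)$ (up to the finitely many differences guaranteed by the Sageev argument in Lemma~\ref{BASIC}), so in your setup the potentially non-nested elements never enter the connectedness argument---but this is a point worth stating explicitly, since it is exactly where the blanket statement of Lemma~\ref{CONNECTED} would otherwise need to be reexamined.
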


To prove Theorem~\ref{MIXING}, we will apply the arguments laid out in Section~\ref{VeryGoodPosition},
with a few minor modifications.

\begin{lem}
Let $\Sigma$ be as in Theorem~\ref{MIXING}.
The relation $\leq$ on $\Sigma$ given by:
\[
	A \leq B \iff A \subset B \text{ or } A \cap B^* \text{ is the only small corner of the pair } (A,B)
\]
is well-defined, and is a partial order on $\Sigma$.
\end{lem}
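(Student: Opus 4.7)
The plan is to extend the arguments of Corollaries~\ref{GOODPOSITION} and~\ref{PARTIALORDER} to this mixed setting by dividing into cases based on whether each element of a pair comes from the $X_i$ family or the $Y_j$ family. The three things to verify are: that crossing is symmetric on $\Sigma$, so that the word ``small'' is unambiguous; that $\Sigma$ is in good position, so the defining dichotomy for $\leq$ is consistent; and that the three partial-order axioms hold.

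For symmetry of crossing on $\Sigma$: any pair involving a translate of some $Y_j$ is handled by Proposition~\ref{SYMMETRIC} since the $Y_j$ arise from splittings, while any pair within $\mycup \limits_{i=1}^m \Sigma(X_i)$ is handled by Lemma~2.3 of~\cite{Scott1998}, which uses the hypothesis that $G$ is finitely generated. Consequently ``small'' is unambiguous: a corner of $(A,B)$ is $Stab(A)$-finite if and only if it is $Stab(B)$-finite. For good position, pairs within $\mycup \limits_{i=1}^m \Sigma(X_i)$ are in very good position by hypothesis (stronger than needed), and pairs within $\mycup \limits_{j=1}^n \Sigma(Y_j)$ are in good position by Corollary~\ref{GOODPOSITION} since no two $\sigma_j$ are isomorphic. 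For a mixed pair $(gX_i, hY_j)$, if two corners were small then (after replacing $X_i$ or $Y_j$ by its complement if necessary) $gX_i$ and $hY_j$ would be $H_i$-almost equal, and Lemma~\ref{COMMENSURABLE} would force $H_i$ and $K_j$ to be commensurable; this contradicts the standing hypothesis that $H_i$ is finitely generated while $K_j$ is not. So no mixed pair has two small corners.

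With $\leq$ thus well-defined, reflexivity is immediate, and antisymmetry is the verbatim argument from Corollary~\ref{PARTIALORDER}: if $A \leq B$ and $B \leq A$, then $A \cap B^*$ and $B \cap A^*$ are both small, so good position forces one to be empty, after which the defining clause of the other inequality forces the other to be empty as well, giving $A = B$. For transitivity, given $A \leq B \leq C$ with all three distinct, I would follow Corollary~\ref{PARTIALORDER}: subtract finitely many $Stab(B)$-cosets from $B$ to get $B' \subset C$; then $A \cap (B')^*$ is $Stab(B)$-finite, hence by the symmetry step also $Stab(A)$-finite, so subtracting finitely many $Stab(A)$-cosets from $A$ produces $A' \subset B' \subset C$, showing that $A \cap C^*$ is small.

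The main obstacle is the sub-case in which $A \cap C^*$ is not the unique small corner of $(A,C)$; then good position yields a second empty corner besides (possibly) $A \cap C^*$, and one must still deduce $A \subset C$. I would analyze the three sub-cases according to where $A$ and $C$ live. If both lie in $\mycup \limits_{i=1}^m \Sigma(X_i)$, very good position forces each small corner to actually be empty, so $\leq$ restricted to this sub-family coincides with plain inclusion and the inclusions $A \subset B \subset C$ chain. If both lie in $\mycup \limits_{j=1}^n \Sigma(Y_j)$, Proposition~\ref{EQUIVALENT} combined with pairwise non-isomorphism of the $\sigma_j$ forces $A$, $B$, and $C$ to lie in a single $\Sigma(Y_j)$, on which $\leq$ already coincides with inclusion. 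In the mixed sub-case the good-position argument above excludes two small corners outright. In each sub-case we obtain $A \subset C$, hence $A \leq C$, completing transitivity.
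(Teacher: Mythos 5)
Your proof follows essentially the same approach as the paper's: both use Lemma~\ref{COMMENSURABLE} together with the finitely-generated/non-finitely-generated dichotomy on stabilizers to rule out two small corners for mixed pairs, invoke very good position on $\mycup \Sigma(X_i)$ and non-isomorphism plus Proposition~\ref{EQUIVALENT} on $\mycup \Sigma(Y_j)$, and then run the argument of Corollary~\ref{PARTIALORDER}. One small over-complication worth flagging: invoking Proposition~\ref{SYMMETRIC} for pairs involving a $Y_j$ is both unnecessary and slightly incomplete here --- that proposition only gives one direction of symmetry (when the set arising from a splitting plays the role of ``$Y$''), whereas the hypothesis that $G$ is finitely generated already makes Lemma~2.3 of Scott apply to \emph{every} pair in $\Sigma$, which is the route the paper takes and is what you ultimately need for the ``symmetry step'' in your transitivity argument (where $B'$ is not itself an almost invariant set, so the corner-symmetry has to be delivered by the $G$-finitely-generated Cayley-graph argument rather than by Proposition~\ref{SYMMETRIC}).
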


\begin{proof}
Since $G$ is finitely generated, a corner of the pair $(A,B)$ is $Stab(A)$-finite if, and only if,
the corner is $Stab(B)$-finite (proved in Lemma 2.3 of \cite{Scott1998}).
Hence ``smallness'' of a corner is well-defined.

Next we claim that if a pair $(A,B)$ has two small corners, then one is empty.
Suppose two corners of $(A,B)$ are small.
By Lemma~\ref{COMMENSURABLE}, $Stab(A)$ and $Stab(B)$ are commensurable.
Since finite index subgroups of finitely generated groups are finite generated,
and since we are assuming that the $H_i$'s are finitely generated and the $K_j$'s are not finitely generated,
this implies either $A, B \in \mycup \limits_{i =1}^m \Sigma(X_i)$ or $A, B \in \mycup \limits_{i =1}^n \Sigma(Y_j)$.
In the first case, since $\mycup \limits_{i =1}^m \Sigma(X_i)$ is in very good position,
we must have an empty corner of the pair $(A,B)$.
In the second case, since no two $Y_j$'s yield isomorphic splittings,
we must have $A,B \in \Sigma(Y_j)$ for the same $j$, and hence one corner of $(A,B)$ is empty.

Finally we show that $\leq$ defines a partial order on $\Sigma$.
Since $\mycup \limits_{i =1}^m \Sigma(X_i)$ is in very good position,
The relation $\leq$, when restricted to $\mycup \limits_{i =1}^m \Sigma(X_i)$, is identical to inclusion.
Now the proof of~\ref{PARTIALORDER} shows that $\leq$ defines a partial order on all of $\Sigma$.
\end{proof}

\begin{lem}
Let $\Sigma$ be as in Theorem~\ref{MIXING}.
For all $A, B \in \Sigma$, there are only finitely many $C \in \Sigma$ such that $A \leq C \leq B$.
\end{lem}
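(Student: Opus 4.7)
The plan is to mirror the proof of Proposition~\ref{INTERVALS}, using sandwiching to reduce a mixed interval to intervals inside a single $\Sigma(X_i)$ or $\Sigma(Y_j)$, and then invoking two different interval-finiteness results (one for each family).

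Fix $A, B \in \Sigma$ with $A \leq B$. By the sandwiching hypothesis applied to $\{X_1,\dots,X_m,Y_1,\dots,Y_n\}$, for each index $i \in \{1,\dots,m\}$ we can choose $A_i, B_i \in \Sigma(X_i)$ with $A_i \leq A \leq B \leq B_i$ (taking $A_i = A$ if $A$ already lies in $\Sigma(X_i)$, and similarly $B_i = B$), and for each $j \in \{1,\dots,n\}$ we can choose $A'_j, B'_j \in \Sigma(Y_j)$ sandwiching $A$ and $B$. Any $C$ with $A \leq C \leq B$ belongs to some $\Sigma(X_i)$ or some $\Sigma(Y_j)$, and then respectively $A_i \leq C \leq B_i$ or $A'_j \leq C \leq B'_j$. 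Since there are only finitely many $i$ and $j$, it suffices to bound, for each fixed index, the number of $C$ in $\Sigma(X_i)$ (respectively $\Sigma(Y_j)$) lying in the appropriate single-family interval.

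For the splittings side, the argument of Proposition~\ref{INTERVALS} applies unchanged: on $\Sigma(Y_j)$ the partial order $\leq$ is dictated by the Bass-Serre tree $T_{\sigma_j}$, and intervals in that tree are finite. For the side coming from the $X_i$'s, we use that $\mycup_{i=1}^m \Sigma(X_i)$ is assumed in very good position, so $\leq$ restricted to $\Sigma(X_i)$ coincides with literal inclusion. Interval finiteness under inclusion for an $H_i$-almost invariant set, when both $G$ and $H_i$ are finitely generated, is the classical coboundary argument in the Cayley graph of $G$ modulo $H_i$ from Scott and Swarup's work~\cite{ScottSwarup2000}; it is precisely the context for which that argument was designed and requires no splitting hypothesis.

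The only real obstacle is conceptual rather than technical: one must be careful that the two different interval-finiteness inputs are compatible with the single partial order $\leq$ defined on the mixed $\Sigma$. This is immediate, because on each $\Sigma(X_i)$ the order $\leq$ is inclusion (by very good position), while on each $\Sigma(Y_j)$ it is the Bass-Serre order, and the sandwiching step above turns any $\leq$-interval in $\Sigma$ into finitely many such single-family intervals. Combining these bounds proves the lemma.
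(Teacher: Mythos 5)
Your proposal matches the paper's proof in essentially every step: sandwich $A$ and $B$ inside each $\Sigma(X_i)$ and $\Sigma(Y_j)$ as in Proposition~\ref{INTERVALS}, then apply Scott--Swarup's Cayley-graph interval finiteness (Lemma 1.15 of \cite{ScottSwarup2000}, which is where your ``classical coboundary argument'' lives) to the $\Sigma(X_i)$ pieces and the Bass--Serre tree argument to the $\Sigma(Y_j)$ pieces. The paper states the two single-family facts first and then invokes the proof of Proposition~\ref{INTERVALS} to combine them, but the content and the two inputs are the same as yours.
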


\begin{proof}
Since $G$ and all the $H_i$ are finitely generated,
Lemma 1.15 of \cite{ScottSwarup2000}
shows that for all $A, B \in \Sigma(X_i)$,
there are only finitely many $C \in \Sigma(X_i)$ such that $A \leq C \leq B$.
Since each $Y_j$ is a standard almost invariant set arising from a splitting,
for all $A, B \in \Sigma(Y_j)$,
there are only finitely many $C \in \Sigma(X_i)$ such that $A \leq C \leq B$.

By the proof of Proposition~\ref{INTERVALS}, for all $A, B \in \Sigma$,
only finitely many $C \in \Sigma$ satisfy $A \leq C \leq B$.
\end{proof}

To prove~\ref{MIXING},
apply the construction laid out in Section~\ref{VeryGoodPosition}.
The only other modification needed is to note (for example, in the proof of the first part of Lemma~\ref{CONNECTED})
that since $\mycup \limits_{i=1}^{m} \Sigma(X_i)$ is in very good position, each $\Sigma(X_i)$ is nested.


\bibliographystyle{amsplain}
\bibliography{BIBLIO}

\end{document}